\newtheorem{lemma}{Lemma}[section]
\newtheorem{prop}[lemma]{Proposition}
\newtheorem{cor}[lemma]{Corollary}
\newtheorem{thm}[lemma]{Theorem}
\theoremstyle{definition}
\newtheorem{rem}[lemma]{Remark}
\newtheorem{rems}[lemma]{Remarks}
\newtheorem{defi}[lemma]{Definition}
\newcommand{\A}{\measuredangle}
\newcommand{\Proba}{{\mathrm{Proba}}}
\DeclareMathOperator{\Isom}{Isom}
\DeclareMathOperator{\Aff}{Aff}
\DeclareMathOperator{\Co}{Cone}
\DeclareMathOperator{\Cay}{Cay}
\title{Proper actions on $\ell^p$ spaces for relatively hyperbolic groups}
\author{Indira Chatterji and Fran\c cois Dahmani}
\begin{document}
 
\maketitle
\begin{abstract}
We show that for any group $G$ that is hyperbolic relative to subgroups that admit a
proper  affine isometric action on a uniformly convex Banach space, then $G$ acts properly on a uniformly convex Banach space as well.
\end{abstract}

\section*{Introduction}
Kazhdan's property (T) was introduced in 1967 by D. Kazhdan in
\cite{Kaz} and since then has been intensively studied. A
reformulation of Kazhdan property (T), following from work of
Delorme-Guichardet, is that any action on a Hilbert space has a fixed
point. Groups with property (T) include higher rank lattices in simple
Lie groups, such as $SL(n,{\bf Z})$ for $n\geq 3$, but also some
lattices in rank one Lie groups, such as lattices in $Sp(n,1)$ for
$n\geq 3$. The uniform lattices in $Sp(n,1)$ being hyperbolic, it
shows that some hyperbolic groups can have property (T), as opposed to
some others, like free groups or surface groups, that admit a proper
action on a Hilbert space. The latter is called Haagerup property or
aTmenability. The first natural generalization of a Hilbert space are
$L^p$ space and it is implicit in P. Pansu's results in \cite{Pan}, that lattices in $Sp(n,1)$ admit an action without fix points on an $L^p$ space. G. Yu in \cite{Yu} shows that in fact any hyperbolic group admits a proper action on an $\ell^p$-space for $p$ large enough. M. Bourdon in \cite{Bou}, B. Nica in \cite{Ni} and more recently A. Alvarez and V. Lafforgue in \cite{AL} gave an alternative proof of Yu's theorem. On the contrary, higher rank lattices are known to not act on any $L^p$ or uniformly convex Banach space (\cite{BFGM} and \cite{La}).

Relatively hyperbolic groups are a  geometric generalization of hyperbolic groups and have been studied a lot. They allow  interesting group constructions while retaining a lot of the geometry of hyperbolic spaces. Relatively hyperbolic groups can also have property (T), but their
peripheral subgroups can forbid any action on a uniformly convex Banach space. We show that this is in fact the only obstruction. Working with Alvarez-Lafforgue's construction, we prove that any
relatively hyperbolic group has a non-trivial affine isometric action on an $\ell^p$
space, for some $p$ large enough. If furthermore its peripheral subgroups act properly by affine isometries on some
uniformly convex Banach space, then the whole group also acts properly on a uniformly convex Banach space. It is the case for instance for Haagerup peripheral subgroups, in particular for all amenable ones. Our main result is the following.
\begin{thm}\label{main} Let $G$ be a relatively hyperbolic group, non-elementary. Then, for
$p$ sufficiently large $G$ admits an isometric action on an
$\ell^p$-space with an unbounded orbit.

Moreover, if each of the parabolic subgroup act properly on a uniformly convex Banach space (respectively, on an $\ell^p$ space), then so does $G$ (respectively, $G$ acts on an $\ell^q$ space for $q$ large enough).
\end{thm}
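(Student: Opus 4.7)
The idea is to build the desired $G$-action as an $\ell^p$-direct sum of two pieces: a ``hyperbolic'' affine action produced by adapting the construction of Alvarez--Lafforgue \cite{AL} to a hyperbolic model of the relative geometry, and ``parabolic'' affine actions obtained by $\ell^p$-inducing the given actions of the peripheral subgroups. The first piece detects any escape that leaves every peripheral coset in the coned-off picture, while the second piece detects escape inside a peripheral coset.

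Concretely, I first replace the Cayley graph of $G$ by the coned-off Cayley graph $\widehat{\G}$, which is $\delta$-hyperbolic by the definition of relative hyperbolicity and on which $G$ acts cocompactly. I then run Alvarez--Lafforgue's construction on $\widehat{\G}$: their shadows/cones combinatorics should yield, for each $p$ large enough, an affine isometric action $\alpha$ of $G$ on an $\ell^p$-space $E_{\mathrm{hyp}}$, whose cocycle $b$ satisfies $\|b(g)\|_p \asymp d_{\widehat{\G}}(1,g)$. Since $G$ is non-elementary relatively hyperbolic it contains elements acting loxodromically on $\widehat{\G}$, so $\alpha$ has unbounded orbits; this proves the first assertion. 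For the second assertion, assume each peripheral subgroup $P_i$ acts properly by affine isometries on a uniformly convex Banach space $V_i$. Using the standard $\ell^p$-affine induction from closed subgroups, I obtain a $G$-action on $E_i := \ell^p(G/P_i, V_i)$; this space is uniformly convex (and is itself an $\ell^p$-space when $V_i$ is), and its restriction to any conjugate of $P_i$ is metrically proper. Endowing
\[
E := E_{\mathrm{hyp}} \oplus \bigoplus_i E_i
\]
(with all direct sums understood as $\ell^p$-sums) with the diagonal affine action makes $E$ uniformly convex. Properness follows from a dichotomy: if $g_n\to\infty$ in $G$ and $d_{\widehat{\G}}(1,g_n)\to\infty$, the $E_{\mathrm{hyp}}$-summand detects the escape by the first step; otherwise, a pigeonhole argument on the $\widehat{\G}$-ball combinatorics extracts a subsequence escaping inside a coset of a conjugate of some $P_i$, and this is detected by the corresponding $E_i$-summand.

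The main technical obstacle is running the Alvarez--Lafforgue argument on $\widehat{\G}$ itself, whose cone-vertices have infinite valence and infinite stabilizers; the cones and shadows must be redefined around these singular points so that the growth estimate $\|b(g)\|_p \asymp d_{\widehat{\G}}(1,g)$ still holds, which is the geometric core of the adaptation promised in the introduction. A secondary but crucial difficulty is the pigeonhole step in the dichotomy, which requires the coarse geometry of relatively hyperbolic groups (in particular coarse projections onto peripheral cosets) to reduce an escape in a $\widehat{\G}$-bounded neighborhood to an honest escape inside a single peripheral coset, so that one of the induced summands $E_i$ can see it.
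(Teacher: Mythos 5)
Your plan coincides at the structural level with what the paper does: work in the coned-off Cayley graph, produce an Alvarez--Lafforgue-type cocycle detecting escape in the relative metric, then superpose induced actions from the peripheral subgroups to detect escape inside peripheral cosets, and sum everything up in a uniformly convex product. The unbounded-orbit conclusion via loxodromic elements, and the properness dichotomy (escape in $\widehat{\Gamma}$ versus escape inside a coset, via a word-metric-to-relative-metric comparison) are also the paper's argument. So you are not taking a different route -- but there are two genuine gaps, one of which you name and one of which you do not.

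The first gap you correctly identify: Alvarez--Lafforgue's construction does not run as-is on $\widehat{\Gamma}$, because the $\alpha$-geodesic ``slices'' around an infinite-valence cone vertex are infinite sets, so the uniform measures they use are ill-defined and their decay estimates (which rely on uniform local finiteness of balls) collapse. The paper's resolution is not a cosmetic modification of shadows: it replaces $\alpha$-geodesics by \emph{conical} $\alpha$-geodesics (controlling deviation by angle, not just distance), proves finiteness of slices via fineness and conical thinness of triangles, and then defines the cocycle as a difference of probability measures produced by an iterated flow step. Crucially, the confluence estimate used to get $\ell^p$-summability is not purely an exponential-decay-per-step argument as in the locally finite case; it exploits the angular ``check points'' (Proposition \ref{prop;checkpoint}): when the geodesic crosses a sufficiently large angle, the flows from two nearby sources become \emph{equal}, not merely close. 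You would need to reproduce all of this.

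The second gap is more serious because you present it as routine. You write that ``the standard $\ell^p$-affine induction from closed subgroups'' yields a $G$-action on $\ell^p(G/P_i, V_i)$ whose restriction to conjugates of $P_i$ is proper. The \emph{linear} induced representation is indeed standard. But an affine action needs a cocycle, and the usual way to build one (as in the free-product case treated in Section~\ref{sec;freeprod}) requires a system of coset representatives $gP_i \mapsto \widetilde{g}$ that is ``almost $G$-invariant'': for fixed $\gamma$, the function $gP_i\mapsto \|c(\widetilde{g}^{-1}\gamma\widetilde{\gamma^{-1}g})\|$ must lie in $\ell^p(G/P_i)$. In a free product the normal form furnishes such representatives with only finitely many exceptional cosets per $\gamma$. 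In a general relatively hyperbolic group no canonical deterministic choice of representatives has this stability, and without it your $E_i$-summand is not defined as an affine $G$-space. The paper's solution is the central new object of Section~\ref{induced}: \emph{random coset representatives} $\nu^{gH_i}$, probability measures on cosets given by the masks $\mu_1(\widehat{gH_i})$ of the same flow, which are shown (Corollary \ref{cor;ALconique}, Proposition \ref{cor;periph_i}) to be $\ell^p$-almost $G$-invariant precisely because of the confluence estimates. Only then can one write the induced cocycle (Lemma \ref{pcocycle}) and prove $H_i$-properness (Proposition \ref{prop;induced_affine_rep}). Your proposal needs either this device or an explicit substitute; as stated, the ``standard induction'' step does not exist.

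A smaller point: the estimate you quote, $\|b(g)\|_p\asymp d_{\widehat{\Gamma}}(1,g)$, is not what one gets; the paper proves $\|c(g)\|_p^p \geq \epsilon\, d_{\widehat{\Gamma}}(1,g) - k_0$ together with convergence of the defining sum, which is the correct scaling and suffices for both the unbounded-orbit claim and the dichotomy.
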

Even if they do not appear explicitly in print, some parts, and particular cases of this statement were known to experts. For instance, putting together  results of Puls on $L^p$-cohomology \cite{Puls}, and of Gerasimov \cite{Gera}, on the Floyd boundary of relatively hyperbolic groups, one can show that non-elementary relatively hyperbolic groups act on some $L^p$-space without fixed point.  E. Guentner, E. Reckwerdt and R. Tessera obtained a version of Theorem \ref{main} for groups that are relatively hyperbolic with virtually nilpotent parabolic subgroups \cite{GRT} and recently, E. Guentner and R. Tessera announced in talks the case of amenable parabolic subgroups.

Let us comment on the general strategy, from  Alvarez-Lafforgue in
\cite{AL}, Yu \cite{Yu} and Mineyev \cite{Min}. Given a representation
$\pi$ of a group $G$ in the isometry group of a normed vector space
$V$, one can obtain an affine isometric action by constructing a
cocycle $c:G\to V$. The action is given by $g\cdot v =
\pi(g)(v)+c(g)$. 

For instance, consider $W= \ell^1(G, \mathbb{R})$, and $V_\infty= \ell^\infty(G,W)$.
Then consider the
representation $\pi:G\to {\rm Isom} (V_\infty)$  by precomposition of the maps in
$\ell^\infty$. If $G$ is discrete,  for all $p$, $V_p= \ell^p (G,W)$ is a
subspace of  $V_\infty$,  preserved by $\pi$, and on which $\pi(G)$ consists of isometries.   For
an affine action on $V_p$, one thus needs a cocycle $c:G\to
V_\infty$ so that, for all $G$, $c(g)$ is $\ell^p$-summable
(in other words, for all $g$, the sum $\sum_{h\in G} \| (c(g))(h)\|_W^p$
must be finite). 

A traditional example is when
$G$ is free. In that case,  let $d\in V_\infty $ be such that $d(h)$ is the
indicator of the unique neighbor of $h$ closer to $1$  in the word
metric (and $d(1)=0$).  One may see this as the arrival point at $h$ of the geodesic flow from $1$.  Then $c(g) =d-\pi(g)d$ is also in $V_\infty$, and is supported by the interval $[1,g]$ in 
the Cayley tree of $G$. Since it has finite support, it is in $V_2 = \ell^2(G,W)$. The map $c$ is  from $G$ to  $V_2$, and it is     a cocycle for the representation $\pi$, because it is  the 
coboundary of $d$  in
$V_\infty  $ (which makes the formal cocycle relation hold).
 One thus obtains an affine action of the free group on $\ell^2 (G,W)$. The knowledgable reader may have noticed that this is a variation on the more usual action on $\ell^2(E,\mathbb{R})$ where $E$ is the set of oriented edges of a Cayley tree of the free group. In this more classical (arguably simpler)  action, the function $d$ is the indicator of the set of all oriented edges pointing toward $1$ (those whose end point is closer to $1$ than their starting point).

 Defining an analoguous cocycle $c$ for other groups is more difficult, sometimes impossible, and is the reason of our seemingly complicated presentation of the free group case. One may opt for trying to determine how much should $h\in G$ be thought in the interval $[1,g]$. For this, one may use a geodesic flow, and test whether the flow from $1$ and the flow from $g$ toward $h$ tend to arrive to $h$ similarily or differently. 
Writing $\mu_1(h), \mu_g(h)$ the final values of such  flows (in a suitable normed vector space $W$),  the measurement of this difference  defines $c(g) = (\mu_1 - \mu_g) :G\to W $, a map from $G$ in $W$. It provides $c: G\to \ell^p(G, W)$ if the confluence is fast enough (exponential) for almost all $h\in G$, and if $p$ is choosen sufficiently large to compensate the exponential growth of $G$.

Our argument is then through the construction of a flow as done by Alvarez-Lafforgue in \cite{AL}. This flow takes an initial point, and a target point (in
the coned-off Cayley graph, or more generally in any fine $\delta$-hyperbolic graph), and flows
the initial point toward the target point, using a family of
``reasonable'' paths: those were ``$\alpha$-geodesics'' in the work of Alvarez-Lafforgue \cite{AL} , but to handle the lack of local finiteness we need {\it conical $\alpha$-geodesics}. During the flow, the position of the flowed
initial point becomes a probability measure, called the {\it mask of the
target point for the initial point} (that is what the target point needs
to mask the initial point from its sight and is reminiscent of the notion of wall). The whole argument is to control
stability of the flow (along the way, it won't escape an a-priori controlled zone) and
confluence of the flow (two sources close to each other, with same target, will flow to
the same positions with positive probability at each step).  In a hyperbolic group, it is done
by Alvarez-Lafforgue in \cite{AL}, and relies on the uniform local finiteness. Here, we
work in highly non-locally finite setting. Using the tools developed to handle
this non-local finiteness in \cite{Dthesis}, we manage to construct a flow. The key here is that, sometimes,
due to the angular geometry of relative hyperbolicity,  
the flow steps are so much confluent that they become equal in one step, which compensates the lack of local finiteness. 
This flow, which we think of as some kind of geodesic flow,
provides a cocycle $c$, so that $c(g)(h)$ is the difference between the masks of $h$ for $1$ and for $g$,  that is $\ell^p$-summable (for $h$ ranging over $G$), hence a nice action. 

The flow
also provides, not coset representatives for peripheral subgroups, but {\it random coset representatives}, which are probability measures for
coset representatives (see Definition \ref{randomCoset}). This allows us to construct a cocycle for the induced
representation, if ever a peripheral subgroup does act. Random coset representatives are not restricted to relatively hyperbolic groups, and for instance in the case of the Haagerup property, we obtain a criteria (Proposition \ref{Haagerup}), which generalizes the fact that free product with amalgamations over finite groups of groups with the Haagerup property have the Haagerup property themselves (and similarly for actions on an $\ell^p$-space, see \cite{Ar}). An interesting corollary is the context of small cancellation groups $C'(\lambda)$ over free products (see \cite[Chapter V.9]{LS77}), for small $\lambda$.
\begin{cor}\label{scH} Let $\lambda <1/6$. A group that is small cancellation $C'(\lambda)$ over a free product of Haagerup groups (respectively, of groups acting properly on an $\ell^p$-space for
  some $p>1$), is itself Haagerup (respectively, acts on an $\ell^p$-space for that same $p$).\end{cor}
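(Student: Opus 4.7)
I would attack this as an application of the random coset representative machinery of this paper, routing through the fact that such small cancellation quotients are relatively hyperbolic. This is more delicate than a black-box use of Theorem \ref{main}, because the corollary demands that the exponent $p$ (and, in particular, the Hilbert structure) be preserved, whereas Theorem \ref{main} would only give an $\ell^q$-action for $q$ large.

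\emph{Step 1: Relative hyperbolicity.} For $\lambda<1/6$, the small cancellation hypothesis gives a Greendlinger-type lemma for the free product presentation. From this it follows by standard arguments that (i) each free factor $G_i$ injects into the quotient $G$, and (ii) the coned-off Cayley graph of $G$ with respect to the cosets of the $G_i$ is $\delta$-hyperbolic and fine. Equivalently, $G$ is hyperbolic relative to the images of the $G_i$. This is classical in the small-cancellation-over-free-products literature and is what unlocks the machinery of the paper.

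\emph{Step 2: Reducing to \ref{Haagerup}.} Once $G$ is relatively hyperbolic with peripheral structure $\{G_i\}$, the flow built via conical $\alpha$-geodesics produces, for each peripheral subgroup $G_i$, a random coset representative map in the sense of Definition \ref{randomCoset}. For the Haagerup statement, I would apply Proposition \ref{Haagerup} with a proper action of each $G_i$ on a Hilbert space and these random coset representatives, obtaining a proper action of $G$ on the induced Hilbert representation. The same argument, carried out with $\ell^p$-spaces in place of Hilbert spaces, yields the $\ell^p$-version for the \emph{same} exponent $p$: the induced representation of $G$ on $\ell^p(G/G_i,H_i)$ carries a cocycle assembled from the peripheral cocycles and the mask-difference cocycle coming from the flow, and the summability of the latter controls the total cocycle without altering the exponent inherited from the peripheral actions.

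\emph{Main obstacle.} The main work is Step~1, in the precise form required: one must check that the coned-off Cayley graph produced by the small cancellation presentation is fine and $\delta$-hyperbolic in exactly the sense the flow construction needs, and that the parabolic structure is the expected one (the conjugates of the $G_i$). Once this is in place, Step~2 is essentially an application of Proposition \ref{Haagerup}, and the reason the exponent is preserved is that in that criterion $p$ enters only through the peripheral representation and the summability of the mask-difference, not through a growth exponent that would need to be compensated by enlarging~$p$.
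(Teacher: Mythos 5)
Your Step~1 is fine, and you correctly identify Proposition~\ref{Haagerup} as the tool that preserves the exponent. But the argument in Step~2 has two genuine gaps, and they are precisely where the exponent-preservation you are after would break down.

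First, you feed into Proposition~\ref{Haagerup} the random coset representatives produced by the flow. But the flow only produces representatives that are $\ell^p$-almost $G$-invariant for $p$ \emph{sufficiently large} (this is Proposition~\ref{cor;periph_i}, which rests on Corollary~\ref{cor;ALconique}). Proposition~\ref{Haagerup} requires $\ell^2$-almost $G$-invariant representatives, and there is no reason the flow's representatives meet that bar: indeed the need to enlarge $p$ to make the mask-difference summable is exactly the phenomenon that limits Theorem~\ref{main} to $\ell^q$ for $q$ large. The paper sidesteps this by \emph{not} using the flow here: Remark~\ref{rem;randomreps}(\ref{sc}) constructs, directly from the small cancellation normal forms, a random system of coset representatives that is almost $G$-invariant in the strong sense (it differs from its translate on only finitely many cosets), and hence trivially $\ell^p$-almost $G$-invariant for every $p$, including $p=2$. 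This explicit, combinatorial construction is what makes the exponent independent of growth.

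Second, you do not address the other hypothesis of Proposition~\ref{Haagerup}: an action of $G$ on a CAT(0) cube complex with finitely many orbits and finite edge stabilizers, whose vertex stabilizers are the peripherals. This is not a formality; the cube complex is what supplies, via Niblo--Reeves, the $\mathscr{V}_0$ coordinate of the proper action as a \emph{Hilbert} space (and an $\ell^p$-space for the same $p$ via Remark~\ref{BLM}), replacing the large-$p$ cocycle of Theorem~\ref{thm;actionAL}. The paper invokes Martin--Steenbock \cite{MS} (and Wise's wallspace construction) for this cubulation, and Bowditch \cite[Prop.~4.13]{BowRH} to identify the $1$-skeleton with a coned-off Cayley graph. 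Without that cubulation you have no source of relative properness at the same exponent, and your sentence ``the summability of the [mask-difference] controls the total cocycle without altering the exponent'' is exactly the step that fails, because that summability holds only after enlarging $p$.

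So the correct route is: small cancellation gives (a) an almost $G$-invariant random coset system (Remark~\ref{rem;randomreps}(\ref{sc})) and (b) a suitable CAT(0) cube complex (\cite{MS}); with both in hand, Proposition~\ref{Haagerup} applies and preserves the exponent. Your proposal conflates the flow-based machinery of Theorem~\ref{main} with the cube-complex machinery of Proposition~\ref{Haagerup}, and the former cannot deliver the $\ell^2$ (or fixed-$p$) conclusion.
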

\medskip

\noindent
{\it Acknowledgements:} The authors would like to thank Vincent
Lafforgue and Aur\'elien Alvarez for their explanations of their
result, as well as Erik Guentner and Cornelia Drutu for discussions on
the subject. We thank Marc Bourdon for pointing out the relevance of
\cite{Puls} and \cite{Gera} together, and Domink Gruber for pointing
out \cite{GS}. Finally we warmly thank the referee for suggesting
several improvements in the text.  This project started at MSRI during the special semester
in {\it Geometric Group Theory}, and continued during the INI special
semester {\it Non-positive Curvature group actions and cohomology}, supported by EPSRC grant no EP/K032208/1. The authors would like to thank both programs for the great working conditions. Both authors are partially supported by IUF, and the first author by ANR GAMME.
\section{Working in relatively hyperbolic groups}\label{bases}
To fix notations we recall that a {\it graph} $X$ is a set $X^{(0)}$, the {\it vertex set} with a set $X^{(1)}\subseteq X^{(0)}\times X^{(0)}$, the {\it oriented edges}, endowed with an {\it origin map} $o:X^{(1)} \to X^{(0)}$ and a fixed-point free involution reversing the edges $\bar{ }: X^{(1)}\to X^{(1)}$. A pair $\{e,\bar e\}$ is an {\it unoriented edge}. We denote by $t:X^{(1)} \to X^{(0)}$ the composition $\bar o$, this is the {\it terminus map}. We assume the reader familiar with paths, connectedness, length, geodesics, in graphs, as well with hyperbolicity. All our graphs are considered with their graph metric where an edge has length one.
\begin{defi}
Let $X$ be a graph. Given a vertex $v$ and two oriented edges $e_1,
e_2$ such that $o(e_2) = t(e_1) =v$, the {\it angle between $e_1$ and
  $e_2$ at $v$}, denoted by $\A_v(e_1,e_2)$, is the infimum (in
$\mathbb{R_+} \cup \{+\infty\}$)  of length  of paths
from  $o(e_1)$ to $t(e_2) $  that avoid $v$. In other words,
$$\A_v(e_1,e_2)=d_{X\setminus\{v\}}(o(e_1),t(e_2)).$$
By convention, we will use the following abuse of notation: if $x_1, x_2,c$
are vertices in $X$, we say that $\A_c(x_1,x_2)>\theta$ if there
exists two edges $e_1, e_2$ with $t(e_1)= o(e_2) =c$, with  $e_i$ on a geodesic between $c$ and $x_i$ ($i=1,2$) and such
that $\A_c(e_1,e_2)>\theta$. We say that  $\A_c(x_1,x_2)\leq\theta$ otherwise, namely, if for any geodesic between $c$ and $x_i$ and starting by and edge $e_i$($i=1,2$), then $\A_c(e_1,e_2)\leq\theta$. 
\end{defi}
Large angles at a vertex in a $\delta$-hyperbolic graph will force geodesic through that vertex. More precisely we have the following.
\begin{prop}\label{prop;goulet}
Let $X$ be a $\delta$-hyperbolic graph, and let $a,b,c\in X^{(0)}$.\begin{enumerate}
\item If $\A_c{a,b} >12\delta$, then all geodesics from $a$ to $b$ contain $c$.
\item If $e,e'$ are two edges originating at $c$ and that are on geodesics from $c$ to $a$, then $\A_c(e, \bar e') \leq 6\delta$. 
\item For any $\theta\geq 0$, if $\A_c(x,y) > \theta + 12\delta$ then for all choice of edges $\epsilon_1, \epsilon_2$ at $c$ starting geodesics respectively to $x$ and to $y$, $\A_c(\epsilon_1, \bar \epsilon_2) \geq \theta$.\end{enumerate}
\end{prop}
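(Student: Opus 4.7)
I would prove the three parts in the order (2), (1), (3), with (2) playing the role of a technical lemma and (3) following formally from (2) by a triangle inequality.

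For part (2), let $\gamma, \gamma'$ be two geodesics from $c$ to $a$ beginning with $e, e'$. I want to bound $d_{X\setminus\{c\}}(t(e), t(e'))$. Since $\gamma$ and $\gamma'$ share endpoints in a $\delta$-hyperbolic graph, they $2\delta$-fellow-travel synchronously: $d(\gamma(k), \gamma'(k))\leq 2\delta$ for each $k$. Pick $k=\delta+1$ (if $d(c,a)<\delta+1$ the two geodesics essentially coincide and a direct argument suffices). Concatenate three pieces: the subsegment of $\gamma$ from $t(e)$ to $\gamma(k)$ (length $k-1=\delta$), a geodesic bridge from $\gamma(k)$ to $\gamma'(k)$ (length $\leq 2\delta$), and the subsegment of $\gamma'$ from $\gamma'(k)$ back to $t(e')$ (length $\delta$). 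The outer segments are subsegments of geodesics not containing $c$, so they avoid $c$; and the bridge cannot pass through $c$ because its endpoints lie at graph-distance $k=\delta+1$ from $c$ while the bridge itself has length $\leq 2\delta<2(\delta+1)$. Total length $\leq 4\delta\leq 6\delta$.

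For part (1), suppose for contradiction that some geodesic $[a,b]$ avoids $c$, and let $e_a, e_b$ at $c$ realize $\A_c(\bar e_a, e_b)>12\delta$ while starting geodesics $\gamma_a, \gamma_b$ to $a,b$. Apply the slim triangle property to the triangle $\gamma_a\cup\gamma_b\cup[a,b]$: since $c\notin[a,b]$, the two geodesics $\gamma_a, \gamma_b$ fellow-travel near $c$ rather than immediately branching onto $[a,b]$. Concretely, using the tripod approximation, for $k\leq (a|b)_c$ the vertices $\gamma_a(k), \gamma_b(k)$ lie within $2\delta$; and when $(a|b)_c$ is too small for this to give room, the slim-triangle property hands us a vertex of $[a,b]$ within $\delta$ of $\gamma_a(k)$ and $\gamma_b(k)$, which we use as the bridge instead. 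In either case, running out from $t(e_a)$ along $\gamma_a$ a distance slightly larger than $\delta$, crossing by a bridge of length $\leq 2\delta$ (which must avoid $c$ because both endpoints are too far from $c$ for such a short path to reach it), and returning to $t(e_b)$ along $\gamma_b$, yields a path of length $\leq 12\delta$ in $X\setminus\{c\}$ from $t(e_a)$ to $t(e_b)$. This contradicts $\A_c(\bar e_a, e_b)>12\delta$.

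For part (3), observe that $\A_c(\cdot,\cdot)$ is (a restriction of) the graph metric on $X\setminus\{c\}$ and therefore obeys a triangle inequality. Fix edges $e_1,e_2$ at $c$ that witness $\A_c(x,y)>\theta+12\delta$. For any other $\epsilon_1,\epsilon_2$ starting geodesics from $c$ to $x,y$, part (2) gives $\A_c(e_1,\bar\epsilon_1)\leq 6\delta$ and $\A_c(\epsilon_2,\bar e_2)\leq 6\delta$, so by the triangle inequality
\[
\A_c(\epsilon_1,\bar\epsilon_2)\ \geq\ \A_c(e_1,\bar e_2)-\A_c(e_1,\bar\epsilon_1)-\A_c(\epsilon_2,\bar e_2)\ >\ (\theta+12\delta)-6\delta-6\delta\ =\ \theta.
\]

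The main obstacle is the bookkeeping in (1) and (2): the bound from fellow-traveling is in the ambient metric of $X$, not in $X\setminus\{c\}$, so one must choose the times along the geodesics to be strictly larger than $\delta$ (so short bridges cannot accidentally cross through $c$) while simultaneously remaining inside the fellow-travel region controlled by $(a|b)_c$. The case of very small Gromov product, where the slim-triangle comparison lands on $[a,b]$ rather than on the other geodesic from $c$, is the one that forces us to pay the full $12\delta$ rather than a smaller constant.
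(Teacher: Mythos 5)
Your proof is correct and follows essentially the same strategy as the paper's: $\delta$-thinness of triangles and bigons supplies short detours around $c$ whose bridges avoid $c$ because their endpoints are too far away, and part (3) follows by the triangle inequality for angles (you need only (2), which is a small simplification of the paper's ``(1) and (2)''). The one imprecision is in part (1): when $(a|b)_c$ is small, the slim-triangle property does not in general give a single vertex of $[a,b]$ close to both $\gamma_a(k)$ and $\gamma_b(k)$, but rather two possibly distinct such vertices, so the bridge must also include the short arc of $[a,b]$ between them; the paper's proof explicitly carries that arc, and since it has length roughly $4\delta$ the total stays comfortably under $12\delta$, so your estimate survives once this is added.
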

\begin{proof} (1) Applying $\delta$-thiness of triangles on the
 geodesics $[c,a ]$ and $[c, b]$ at distance $2\delta$ from $c$, we
 obtain a path of length at most $12\delta$, containing an arc of $[a,b]$, and that goes from the end of the first edge of
 $[c,a]$ to the end of the first edge of the $[c,b]$. If the angle at $c$ is larger than
 $12\delta$ this path cannot avoid $c$. But only the arc on $[a,b]$ can meet $c$ again. Hence the first claim.
 
 (2) Any geodesic bigon between $c$ and $a$ must be $\delta$-thin, so if we take a geodesic bigon starting with the edges $e$ and $e'$ and look at the points at distance $2\delta$ from $c$, we get a path of length $6\delta$ at most and that avoids $c$.
 
 (3) This is a 
 consequence of (1) and (2), combined with the triangular inequality for angles.
\end{proof} 
\begin{defi}A graph is called {\it fine} if for each edge $e$, and for all $L\geq 0$, the set of edges 
$$\{ e'\,|\,t(e')=o(e), \A_{o(e)}(e, e') \leq L\}$$ 
is finite. 
\end{defi}
This definition of fine graphs is equivalent to
Bowditch's definition that, for all edge, for all number, there are
only finitely many simple loops of this length passing through this
edge, \cite{BowRH}.

\begin{defi} Let $G$ be a finitely generated group, and  $H_1, \dots, H_k$,
subgroups of $G$.  Consider a Cayley graph ${\Cay} G$ of $G$ (over
a finite generating set), and construct the cone-off over $H_1, \dots,
H_k$ as follows: for all $i$ and all  left coset $gH_i$ of $H_i$,
add a vertex $\widehat{gH_i}$  and for each $h\in H_i$, add  an edge between 
$\widehat{gH_i}$ and $gh$.  We denote by $\widehat{{\Cay}} (G)$ this graph, called a 
{\it coned-off Cayley graph} (of $G$ with respect to the subgroups $H_1, \dots, H_k$).\end{defi}
We can now recall the definition of Bowditch of relative hyperbolicity from \cite{BowRH} (his original definition is about $G$-graphs, but this is an equivalent formulation from \cite[Appendix]{Dthesis}). 
\begin{defi}
A pair $(G,\{H_1, \dots, H_k\})$ of a group $G$ with a collection of
subgroups, is {\it relatively hyperbolic},  if a (equivalently any) coned-off
Cayley graph $\widehat{{\Cay}} (G)$ over $H_1, \dots,H_k$ is hyperbolic and fine. The groups $H_i$ are called {\it peripheral subgroups}.
\end{defi}
A difficulty of working in relatively hyperbolic
groups is the lack of local finiteness in the coned-off graph. The angles, and below the cones, are useful tools
for working around this. The following estimate says that the word metric is equivalent to the coned-off metric to which we add the sum of the angles at infinite valence vertices. We will be needing the first inequality only.
\begin{prop}\label{prop;pseudo_word_metric}
 Let $G$ be a  group hyperbolic relative to $\{H_1, \dots, H_k\}$ with $d_w$ a word metric, for a finite generating set containing
 generating sets of each $H_i$. Let $\widehat{{\Cay}} (G)$ be its coned-off Cayley graph,
 for its relatively hyperbolic structure, with $\hat{d}$ the graph metric.

 There are $A,B>1$ for which, for all $g\in G$,  for each choice of
 geodesic $[1,g]$ in the coned-off graph $\widehat{{\Cay}} (G)$, if $\sum\A ([1,g])$ denotes the sum of
 angles of edges of $[1,g]$ at vertices of infinite valence, then 
$$  \frac1A d_w(1,g) -B \leq     \hat{d}(1,g) +    \sum\A ([1,g])     \leq A\,d_w(1,g)  +B. $$
\end{prop}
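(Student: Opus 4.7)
The proposition is a coarse bi-Lipschitz identification of the word metric $d_w$ with the ``relative'' quantity $\hat d + \sum \A$, expressing the intuition that an excursion of a coned-off geodesic through a peripheral coset costs a number of $G$-generators proportional to the angular width of the excursion at the corresponding cone vertex.

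The upper bound $\hat d(1,g) + \sum\A \leq A\, d_w(1,g) + B$ is the easier direction. Since every Cayley edge is an edge of $\widehat\Cay(G)$, a Cayley geodesic from $1$ to $g$ is a path of length $d_w(1,g)$ in $\widehat\Cay$, so $\hat d(1,g) \leq d_w(1,g)$. Each angle on a coned-off geodesic $[1,g]$ at a cone $c_j$ with entry vertex $a_j$ and exit vertex $b_j$ satisfies $\A_{c_j} \leq d_w(a_j,b_j)$, because a Cayley geodesic from $a_j$ to $b_j$ is a path in $\widehat\Cay\setminus\{c_j\}$. Controlling $\sum d_w(a_j,b_j)$ by a linear function of $d_w(1,g)$ (using that $[1,g]$ is a geodesic in the $\delta$-hyperbolic graph $\widehat\Cay$) yields the upper bound.

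For the lower bound $d_w(1,g) \leq A(\hat d(1,g) + \sum\A) + B$ (the one needed in the sequel), the strategy is to convert a coned-off geodesic $[1,g]$ into a word in the generators of $G$ representing $g$. For each Cayley edge of $[1,g]$, record the corresponding generator; for each cone passage at $c_j = g_j H_i$ with entry $a_j$ and exit $b_j$, insert a word in the generators of $G$ (which by assumption include generators of $H_i$) representing $a_j^{-1} b_j \in H_i$, of length at most $A'\,\A_{c_j} + B'$. The length of this word bounds $d_w(1,g)$ from above, so the whole inequality reduces to the following key estimate: there exist constants $A', B'$ such that for every peripheral coset $c = gH_i$ and all group elements $a, b$ adjacent to $c$ in $\widehat\Cay(G)$,
$$|a^{-1}b|_G \leq A'\,\A_c(a,b) + B'.$$

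The main obstacle is this key estimate. Fineness of $\widehat\Cay(G)$ gives only that the set of $h \in H_i$ with $\A_{\widehat{H_i}}(1,h) \leq L$ is finite for each $L$; upgrading this finiteness to a genuinely linear comparison between the angular pseudo-metric and the word metric on $H_i$ uses the $H_i$-equivariance of $\A_{\widehat{H_i}}$ under the left $H_i$-action fixing $\widehat{H_i}$, the inclusion of a generating set of $H_i$ in our fixed generating set of $G$, and the quasi-isometric embedding of peripheral subgroups in relatively hyperbolic groups. These are exactly the tools developed in the fine hyperbolic graph framework of \cite{Dthesis}.
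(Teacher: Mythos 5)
Your overall strategy mirrors the paper's proof: the lower bound is obtained by converting a coned-off geodesic into a word, filling each cone passage at $c_j = \widehat{g_jH_i}$ with a word in $H_i$, and the whole argument reduces to the key estimate $|a^{-1}b|_G \leq A'\,\A_c(a,b) + B'$ for $a,b$ in the coset attached to $c$. But your treatment of that key estimate has a genuine gap.

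You correctly note that fineness only gives that $\{h\in H_i : \A_{\widehat{H_i}}(1,h)\leq L\}$ is finite for each $L$, not that its diameter grows linearly in $L$. However, the tools you then invoke—$H_i$-equivariance, the generating-set inclusion, and the quasi-isometric embedding of peripheral subgroups—do not close this gap. The QI embedding compares $|h|_{H_i}$ with $|h|_G$; it says nothing about the angular pseudo-metric $\A_{\widehat{H_i}}(1,\cdot)$, which is a priori much \emph{smaller} than $|h|_G$ (a short path from $1$ to $h$ avoiding $\widehat{H_i}$ is free to route through other cone points, so there is no direct comparison with $d_w(1,h)$). Equivariance together with finiteness of angular balls alone would permit those balls to have diameter growing arbitrarily fast in $L$; nothing in your sketch forces linearity.

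The missing idea is a decomposition argument that uses $\delta$-hyperbolicity (not just fineness) to break a large angle into unit angular steps. The paper's argument: by definition of angle there is a path of length $\theta = \A_c(a,b)$ in $\widehat{\Cay}(G)\setminus\{c\}$ from $a$ to $b$; projecting its vertices to the $1$-neighborhood of $c$ produces a chain $y_1,\dots,y_\theta$ in the coset, from $a$ to $b$, whose consecutive terms make angle at most $2\delta$ at $c$ (this is the $\delta$-hyperbolicity input). Fineness together with cofiniteness of the $G$-action then bounds the word length of each transition $y_j^{-1}y_{j+1}$ uniformly, and summing over the $\theta$ steps gives the linear bound. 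Linearity thus comes from this explicit $\theta$-step decomposition, not abstractly from fineness and equivariance, and your proposal does not supply it. (Your upper-bound sketch has a lesser version of the same issue: the control of $\sum_j d_w(a_j,b_j)$ by $d_w(1,g)$ requires that word geodesics and coned-off geodesics fellow-travel—the paper cites \cite[Prop.\ 8.25]{DS} for this—rather than following merely from $[1,g]$ being a geodesic in the $\delta$-hyperbolic graph $\widehat{\Cay}(G)$.)
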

\begin{proof}
First given a geodesic $[1,g]$ in the coned-off graph $\widehat{{\Cay}} (G)$, we construct a path in the
Cayley graph (whose length will bounded from above $d_w(1,g)$) by
replacing each pair of consecutive edges $e,e'$ at an infinite valence
vertex, by a path in the corresponding coset $xH_i$ of $H_i$.  We remark that
we can choose this path of length bounded above in terms of the
angle, say $\theta$. Indeed, by definion of angle, there exists a path in $\widehat{{\Cay}} (G)$ of length at
most $\theta$ avoiding $\widehat{xH_i}$, from $o(e)$ to
$t(e')$. Project each vertex in this path on the $1$-neighborhood of
$\widehat{xH_i}$. One gets a sequence of $\theta$ points $y_1, \dots
y_\theta$  in $xH_i$, from $o(e)$ to
$t(e')$,  two consecutive points being at an angle of at most $2\delta$ at
$\widehat{xH_i}$. It follows that there are only boundedly many
possible transitions $y_i^{-1}y_{i+1}$ in $H_i$, and we have our
bound on the word length in $H_i$ in term of the angle $\theta$ at
$\widehat{xH_i}$. Thus $  \frac1A d_w(1,g) -B \leq \hat{d}(1,g) +
\sum\A ([1,g]) $ (for some $A$ and $B$). 

For the second inequality, consider a geodesic from $1$ to $g$ in the word metric. It produces an injective continuous path $q$ in $\widehat{{\Cay}} (G)$, that fellow-travels a
geodesic $[1,g]$ in the coned-off graph (see for instance \cite[Proposition 8.25]{DS}). The path $q$ does not contain any vertex of infinite valence, therefore each time $[1,g]$ contains a vertex of
infinite valence, we can use the proximity of $[1,g]$ with $q$ and the
fact that $q$ does not contain this point, to estimate the angles in
terms of the length of a subsegment of $q$. By thinness of the graph, each subsegment of $q$ is only used at most a uniformly
bounded amount of times. Therefore, the total sum of the angles at infinite valence points is at most a certain multiple of the length of
$q$, which is the word length of $g$.
\end{proof}
\subsection{Cones}
The notion of angles will now allow us to talk about cones in any graph $X$. Cones are a useful tool for working in relatively hyperbolic groups as they allow to take neighborhoods of geodesics in a finite way. 
\begin{defi}
Let $X$ be a graph. Given an oriented edge $e$ and a number $\theta>0$, the
{\it cone of parameter $\theta$ around the oriented edge $e$}, denoted $\Co_{\theta}(e)$ is the
subset of vertices $v$ and edges $\epsilon$ of $X$ such that there is
a path from $o(e)$, starting by $e$,  and containing $v$ or $\epsilon$,  that is of length at most $\theta$ and for
which two consecutive edges make an angle at most $\theta$.

For vertices of finite valence we can define the cone at a vertex of parameter $\theta$ by the union of all the cones of
parameter $\theta$ around the edges adjacent to the vertex.
\end{defi}
If angles inside a cone of parameter $\theta$ are by definition bounded by $\theta$, angles at vertices close to the cone are also controlled in terms of $\theta$. Precisely we have the following estimate.
\begin{lemma}\label{lem;thetasquare}
In any graph $X$, for any oriented edge $e$ and $\theta\geq 0$, if $v\in \Co_\theta(e)$ then, for all $c\in X^{(0)}$ at distance $\leq
\theta/10$ from both $v$ and $e$, then $\A_c(o(e), v)\leq (\theta^2+3\theta)/2$.
\end{lemma}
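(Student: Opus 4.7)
The plan is to bound, for any admissible choice of first edges $e_1,e_2$ (edges with $t(e_1)=o(e_2)=c$ starting geodesics from $c$ to $o(e)$ and $v$ respectively), the quantity $d_{X\setminus\{c\}}(o(e_1),t(e_2))$ by explicitly building a path. Fix geodesics $\alpha$ from $c$ to $o(e)$ with initial edge $\bar e_1$ and $\beta$ from $c$ to $v$ with initial edge $\bar e_2$, and a witnessing path $\gamma=(o(e)=v_0,v_1,\ldots,v_n=v)$ for $v\in\Co_\theta(e)$, with $n\leq\theta$ and consecutive edges making angle $\leq\theta$ at each internal vertex. The degenerate cases $o(e)=c$ or $v=c$ are vacuous under the abuse-of-notation convention, since no such $e_i$ exists; so I may assume $o(e)\neq c\neq v$.

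I would concatenate three pieces: the geodesic $\alpha$ with its initial edge $\bar e_1$ removed (from $o(e_1)$ to $o(e)$), a modified version of $\gamma$ from $o(e)$ to $v$, and $\beta$ traversed backward with its initial edge $\bar e_2$ removed (from $v$ to $t(e_2)$). The first and third pieces avoid $c$ because geodesics are simple, and each has length at most $\theta/10$ by the hypothesis $d(c,e),d(c,v)\leq\theta/10$. The middle piece must be modified: at each index $1\leq j\leq n-1$ with $v_j=c$, I replace the pair of edges of $\gamma$ adjacent to $v_j$ by a path of length at most $\theta$ from $v_{j-1}$ to $v_{j+1}$ avoiding $c$, which exists precisely because the angle of $\gamma$ at $v_j$ is at most $\theta$.

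The main estimate is then the length of this modified middle piece. Since consecutive vertices of $\gamma$ are distinct, the indices $j\in\{1,\ldots,n-1\}$ with $v_j=c$ are pairwise separated by at least $2$, so there are at most $\theta/2$ of them. Each detour replaces two edges by at most $\theta$ edges, adding at most $\theta-2$ to the length, so the middle piece contributes at most $n+(\theta/2)(\theta-2)\leq\theta^2/2$. Summing the three pieces gives a total length at most $\theta^2/2+2(\theta/10)\leq(\theta^2+3\theta)/2$, which yields the claimed bound on $\A_c(e_1,e_2)$, and hence on $\A_c(o(e),v)$ since $e_1,e_2$ were arbitrary.

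The step I expect to need the most care is not a calculation but the bookkeeping: verifying that the concatenation of the two geodesic tails and the modified cone path really avoids $c$ everywhere, including at internal vertices of the various detours and at the junctions between the three pieces, rather than only at the single vertices $v_j$ where detours are inserted. Handling the boundary cases where $o(e)$ or $v$ coincides with $c$, or where the initial edge $\bar e_i$ is itself the only edge of $\alpha$ or $\beta$, also requires a brief separate mention, but once this is set up cleanly the counting of detours and the length bound follow directly from the angle condition on $\gamma$ and the distance hypothesis.
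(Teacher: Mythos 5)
Your argument is correct and takes essentially the same approach as the paper: concatenate the two geodesic tails from $c$ with the witnessing cone path $\gamma$, and use the angle bound $\leq\theta$ at each internal vertex to bypass every passage of $\gamma$ through $c$, the paper packaging this last step as a ``triangular inequality for angles'' on the loop $g_1\cdot p\cdot \bar g_2$ at $c$ while you spell it out by inserting explicit detours and counting lengths. The only nitpick is a transcription slip: the initial edge of $\beta$ (a geodesic starting at $c$) should be $e_2$ rather than $\bar e_2$, since $o(e_2)=c$; this does not affect the argument.
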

\begin{proof}
Consider $c$ with two edges $\epsilon_1,
\epsilon_2$ starting at $c$, on geodesics $g_1, g_2$  toward $o(e)$, and $v$
respectively, of length $\leq \theta/10$. 
  We also are given a path $p$ starting by $e$, going to $v$
of length at most $\theta$, and maximal angle at most $\theta$ between
consecutive edges. Concatenating the paths $g_1$, $p$, $\bar g_2$
(orientation reversed), we
have a loop at $c$, starting by $\epsilon_1$, and ending by
$\epsilon_2$ and of length at most $6\theta/5$. If this loop doesn't pass by
$c$ (except starting and ending point), then by definition of angle we have that $\A_c(
\epsilon_1, \bar \epsilon_2 ) \leq  6\theta/5$, which would prove the claim. If this path passes at
least twice at $c$, only the segment $p$ can use $c$, and it can do so at most $(\theta- d(o(e),c)-  d(c,v))/{2}$ times, each time realizing an
angle between incident and exiting edge of at most $\theta$.  By the
triangular inequality on angles, this path gives a bound on the angle
$\A_c(\epsilon_1, \bar \epsilon_2)$ of at most $\theta(\theta- d(o(e),c)-  d(c,v))/{2}+   6\theta/5\leq (\theta^2+3\theta)/2$. This establishes
the claim.
\end{proof}
Cones also can be composed in an obvious way.
\begin{prop}\label{prop;composition_of_cones}
Let $\alpha,\beta\geq 0$, if $e,e', e''$ are three edges of a graph and if $e''\in
   \Co_\alpha(e')$ and $e'\in \Co_\beta(e)$ then $e''\in \Co_{\alpha+\beta}(e)$.
\end{prop}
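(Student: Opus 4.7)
The plan is to produce a witness path for $e''\in \Co_{\alpha+\beta}(e)$ by concatenating the two witness paths guaranteed by the hypotheses, and then checking length, initial edge, and angles against the definition of a cone.

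First I would unpack both hypotheses. From $e'\in \Co_\beta(e)$ I extract a path $p_1$ from $o(e)$, of length at most $\beta$, starting with the edge $e$, containing the edge $e'$, and whose consecutive edges make angle at most $\beta$. Similarly, from $e''\in \Co_\alpha(e')$ I extract a path $p_2$ from $o(e')$, of length at most $\alpha$, starting with the edge $e'$, containing $e''$, with consecutive angles at most $\alpha$. Next I would form the spliced path $p$ obtained by retaining the initial segment of $p_1$ up to and including (the first occurrence of) $e'$, and then appending the tail of $p_2$ obtained by deleting its first edge $e'$. Since the truncated $p_1$ ends at $t(e')$ and the retained tail of $p_2$ starts at $t(e')$, the concatenation is a legitimate path in $X$. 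It starts at $o(e)$ with the edge $e$, still passes through $e''$ (which lies in the retained tail of $p_2$), and its length is at most $\beta+(\alpha-1)\leq \alpha+\beta$.

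Finally I would verify the angle condition. Any pair of consecutive edges of $p$ falls into one of three cases: it lies entirely in the truncated portion of $p_1$ (angle $\leq \beta$), entirely in the retained tail of $p_2$ (angle $\leq \alpha$), or it straddles the splice vertex $t(e')$. In the third case the pair is $(e',g_1)$, where $g_1$ is the edge immediately following $e'$ in $p_2$; this angle was already a consecutive-edge angle within $p_2$, and is therefore bounded by $\alpha$. In every case the angle is at most $\alpha+\beta$, so $p$ certifies $e''\in \Co_{\alpha+\beta}(e)$.

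I do not expect a real obstacle: the statement is essentially bookkeeping from the definition. The only mild subtlety is to arrange the splice so that the junction angle is inherited from one of the input paths rather than being a fresh quantity to control, which is why I cut $p_1$ \emph{through} the edge $e'$ and then drop $e'$ from the beginning of $p_2$ rather than gluing the two paths end-to-end through the vertex $o(e')=t(e')$ in some other way.
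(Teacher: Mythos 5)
Your plan (splice the two witness paths at $e'$) is exactly what the paper does, and your length and angle bookkeeping is correct in the case you analyse. However, you have implicitly assumed that $p_1$ traverses the oriented edge $e'$ rather than its reverse $\bar e'$; the cone definition asks only that the path pass over the underlying edge, and the paper's own proof explicitly considers the first path ending in either $e'$ or $\bar e'$. If $p_1$ ends with $\bar e'$, your truncation ends at $o(e')$ while the decapitated $p_2$ begins at $t(e')$, so the splice you describe does not connect. The repair is simple: concatenate the truncated $p_1$ with \emph{all} of $p_2$, producing a backtrack $\bar e'\, e'$ at $o(e')$, whose angle is $0$; the combined length is still at most $\alpha+\beta$ and every other angle is inherited from $p_1$ or $p_2$, so the resulting path is a valid witness for $e''\in\Co_{\alpha+\beta}(e)$. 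As a minor aside, the identification $o(e')=t(e')$ in your last sentence is a slip: the edge involution is fixed-point free, so these vertices are distinct.
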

\begin{proof}
We have a path of length and maximal angle bounded by $\beta$ that
starts by $e$ and ends by $e'$ or $\bar e'$, and another of length and
maximal angle at most $\beta$ that starts at $e'$ and contains $e''$
or $\bar e''$. Concatenate the two paths, possibly overlapping at
$e'$, or possibly cancelling a backtrack $e'\bar e'$,
produces   a  path of length at most $\alpha +\beta$ and maximal angle at
most $\max \{\alpha , \beta\}$. 
\end{proof}
Finally, the following proposition shows that cones are well-suited to work in hyperbolic and fine graphs.
\begin{prop}\cite{Dthesis},  \cite[\S2.3] {DYaman} \label{prop;cones_are_nicer}
In any fine graph, cones are finite.

In any $\delta$-hyperbolic graph, geodesic triangles are conically thin: for any geodesic triangle $([a,b], [b,c], [a,c])$, any edge $e$ on  $[a,b]$
 is contained in a cone of
parameter $50\delta$   around an  edge $e'$ that is either in $[a,c]$ at same distance from $a$ than $e$, or in $[b,c]$ at same distance from $b$ than $e$. 
\end{prop}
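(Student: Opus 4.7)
My plan is to treat the two claims independently.

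For the finiteness of cones in a fine graph, I would induct on the length of witnessing paths. An edge $\epsilon$ or vertex $v$ lies in $\Co_\theta(e)$ precisely when there is a path from $o(e)$ of length at most $\theta$, starting with $e$, with consecutive angles at most $\theta$, that contains $\epsilon$ or $v$. To extend such a path by one edge, one must pick an oriented edge emanating from the current terminus making an angle at most $\theta$ with the previous one. By the defining property of a fine graph, this set of possible extensions is finite at every step. So the tree of admissible paths is finitely branching and of depth at most $\theta$, hence finite; the visited vertices and edges therefore form a finite set.

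For the conical thinness of geodesic triangles in a $\delta$-hyperbolic graph, I would begin from the usual thin-triangle property. Given an edge $f$ on $[a,b]$ at distance $t$ from $a$, I assume up to symmetry that $t$ does not exceed the Gromov product of $b$ and $c$ based at $a$, so that by $\delta$-thinness the edge $f'$ at the same distance from $a$ on $[a,c]$ satisfies $d(o(f),o(f')) \leq \delta$ and $d(t(f),t(f')) \leq \delta$. I then construct an explicit witness path from $o(f')$ by concatenating $f'$, a geodesic $\alpha$ of length at most $\delta + 1$ from $t(f')$ to $o(f)$, and the edge $f$. Its total length is at most $\delta + 3$, well within the length budget $50\delta$.

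The delicate and, I expect, main step is to verify that every consecutive pair of edges along this path makes angle at most $50\delta$. By definition the angle at a bend vertex $v$ equals the infimum of lengths of detours from the predecessor vertex to the successor vertex avoiding $v$, so it is enough to exhibit one such detour of length at most $50\delta$ at each bend. For an interior bend along $\alpha$, I would construct the detour by projecting the two neighbors of $v$ onto the sides $[a,b]$ and $[a,c]$ via $\delta$-thinness (yielding projections at distance $O(\delta)$) and routing through the relevant side; the total length remains $O(\delta)$, and the detour avoids $v$ since it stays on the sides of the triangle, away from the interior of $\alpha$. For the two extremal bends, at $t(f')$ and $o(f)$, where $v$ itself lies on a side, I would instead route the detour through the opposite side, keeping the length $O(\delta)$ by the same thinness estimates. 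Proposition \ref{prop;goulet} supplies the bookkeeping tools to make these estimates explicit, and the comfortable constant $50\delta$ absorbs any inefficiency from local bottlenecks where short paths between nearby vertices may be forced through $v$.
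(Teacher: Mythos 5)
Your plan for the finiteness of cones in a fine graph is correct and standard: the tree of admissible witnessing paths is finitely branching (by the definition of fineness, since each next edge must make bounded angle with the previous one) and of depth at most $\theta$, hence finite. The paper cites this rather than reproving it, so no comparison is needed.

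For conical thinness, however, your plan has a genuine gap, and the difficulty you flag as "the delicate and, I expect, main step" is exactly where it fails. After building the path $f' \cdot \alpha \cdot f$, you need to bound the angle at every interior vertex $v$ of $\alpha$. You propose to do this by projecting the neighbors of $v$ onto $[a,b]$ or $[a,c]$ and routing through the sides. The problem is that nothing forces either the short projection paths, or the relevant portions of the sides, to avoid $v$. This is not a cosmetic concern: the angle at $v$ \emph{is} the length of the shortest detour avoiding $v$, and in a $\delta$-hyperbolic graph this can be arbitrarily large even for consecutive edges of a geodesic of length $\leq \delta + 1$ (think of a cone vertex in a coned-off Cayley graph, where a geodesic may pass through the apex yet the two edges it uses there make a huge angle). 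Thinness of the triangle gives you short distances, but it gives you no control on angles at vertices of infinite valence, which is precisely the quantity you need. So the claim "the detour avoids $v$ since it stays on the sides of the triangle" is not justified, and in fact in the relevant examples it is false, because the apex can lie on those sides or on every short path to them.

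The paper's argument sidesteps this by building not an open path but a closed loop of bounded length through the four endpoints of $e$ and $e'$ (after extending by $3\delta$ on $[a,b]$ and connecting to the other sides by thinness arcs), and then extracting the \emph{simple} loop containing $e$. On a simple loop of length $L$, every angle is automatically at most $L$: at any vertex $v$ on the loop, the complement of $v$ in the loop is itself a path between the two neighboring vertices that avoids $v$. That observation — simplicity converts a length bound into an angle bound — is the missing idea in your proposal; without it, or a substitute, the angle estimates cannot be closed.
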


The statement above differs slightly from that in  \cite[\S2.3] {DYaman}, however the proof is the same. Let us present it, as it is very simple. Consider a subsegment $\sigma_1$ of $[a,b]$  that contains $e$, and extends to a distance $3\delta$ from both its end points (unless it reaches $a$ or $c$ before, a case we leave to the reader). Then append, at both ends, at most $2\delta$ long segments $\sigma_0$ and $\sigma_2$ in order to reach $[a,c] \cup  [b,c]$ (by usual thin triangles property).  If both $\sigma_0$ and $\sigma_2$ reach  $[a,c]$, or if both reach $ [b,c]$, then take the arc $\sigma_3$ of that segment, that closes a loop $\sigma_0\sigma_1\sigma_2\sigma_3$. Notice that $\sigma_3$ is of length at most $11\delta$ (and set $\sigma_4, \sigma_5$ to be trivial in order to continue syntactically thee argument). If the reached segments are different, close the loop as an hexagon around the center of the triangle, by an arc $\sigma_3$  of length $10\delta$ on $[b,c]$ of the triangle, an arc $\sigma_4$ of at most $2\delta$ between $[a,c]$ and $  [b,c]$, and an arc $\sigma_5$ back to the endpoint of $\sigma_2$ on $[c,a]$, hence of length at most $21\delta$. We thus get a loop $(\sigma_0\sigma_1\sigma_2\sigma_3\sigma_4\sigma_5)$ containing $e$. Triangle inequality forbids $e$ to be in the transitions arcs $\sigma_0, \sigma_2, \sigma_4$. If it is either in $\sigma_1$ or $\sigma_3$ there is actually nothing to prove.  Thus, we can extract the simple loop containing    $e$. Again triangular inequality ensures some distance between  $\sigma_0, \sigma_2, \sigma_4$, and this simple loop must contain an edge $e'$ of  $\sigma_2$ or $\sigma_4$ at same distance from respectively $a$ or $b$ than $e$. The length of the loop being at most $42\delta$, its maximal angle is bounded by this quantity. This makes  $e$ belong to the cone of parameter $42\delta$ around $e'$, and the proposition is proved.

The above proposition allows us to deduce that intervals are finite in coned-off graphs, even if those graphs are not locally finite.
\begin{prop}\label{prop;geod_finite}
In any hyperbolic fine graph, between any pair of points $a,x$ the set of points
in a geodesic between $a$ and $x$ is finite.
\end{prop}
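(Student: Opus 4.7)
The plan is to combine the two halves of Proposition \ref{prop;cones_are_nicer}. Fix once and for all one geodesic $[a,x]$. I want to show that every geodesic from $a$ to $x$ is contained in a finite union of cones based at edges of $[a,x]$, and then invoke fineness to conclude that this union is finite.

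First I would let $\gamma$ be an arbitrary geodesic from $a$ to $x$ and apply the conical-thinness statement to the (possibly degenerate) geodesic triangle with vertices $a, x, x$ and sides $\gamma$, $[a,x]$, and the trivial path at $x$. Each edge $e$ of $\gamma$ must then sit in a cone $\Co_{50\delta}(e')$ for some edge $e'$ on $[a,x]$ at distance $d(a, o(e))$ from $a$: the alternative conclusion, placing $e'$ on the trivial side, is vacuous since that side carries no edges.

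Since $[a,x]$ has only $d(a,x)$ edges and, by the first part of Proposition \ref{prop;cones_are_nicer}, each cone $\Co_{50\delta}(e')$ in a fine graph is finite, the union $\bigcup_{e'\,\text{edge of}\,[a,x]} \bigl(\Co_{50\delta}(e') \cup \Co_{50\delta}(\bar e')\bigr)$ is a finite set of vertices and edges; I include both orientations of $e'$ to sidestep any orientation ambiguity in the conical-thinness statement. By the previous paragraph this union contains every vertex lying on some geodesic from $a$ to $x$, which is the claim. I expect no real obstacle: the proposition follows directly from assembling the two halves of Proposition \ref{prop;cones_are_nicer}, with the only subtle point being the observation that a ``triangle'' with two coinciding vertices is still covered by the conical-thinness statement.
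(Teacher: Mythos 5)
Your proof is correct and matches the paper's intended argument: the paper states the proposition as an immediate consequence of Proposition~\ref{prop;cones_are_nicer} without spelling out the details, and the degenerate-triangle trick you use (with two coinciding vertices and a trivial side) is exactly the device the paper itself employs elsewhere, e.g.\ in the proof of part~(3) of Proposition~\ref{prop;recap}, where the triangle $([a,x]_1,[x,x],[x,a]_2)$ is used. Your precaution of including both orientations of $e'$ is sensible and does no harm.
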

A more important use of cones was explored in \cite{D_Israel1}, and follows from the fact that in a hyperbolic graph, quasi-geodesics ``without detours''  are conically
close to geodesic (\cite[Prop. 1.11]{D_Israel1}). We will use a variant of it, exposed in the next section. 
\subsection{Conical $\alpha$-geodesics}
In order to define a flow in the next section, we need to be able to travel coarsely from a point toward
another point in a uniformly finite way.  In \cite{AL}, the authors use the concept of $\alpha$-geodesic, for some $\alpha\geq 0$. Precisely, in a metric space, given two points $x$ and $a$, we say that $t$ belongs to an $\alpha$-geodesic between $x$ and $a$
if 
$$d(x,t)+d(t,a)\leq d(x,a)+\alpha.$$
Note that being on a $0$-geodesic is being on a geodesic. However, when the space is not uniformly locally finite, the set of points in a $\alpha$-geodesic
between $x$ and $a$ can be infinite in general, if $\alpha \geq 1$, even if we assume the graph to be fine. For instance, on the cone-off graph of a relatively hyperbolic group, when a geodesic passes through a cone point, any point on the coset of that cone point will be on a $2$-geodesic between the endpoints of that geodesic. But the coset points that are close (in the coset metric) to the entrance or exit points of the geodesic in the coset, will make small angles with some edges of the geodesic, whereas the other ones will make a large angle with any edge of the geodesic. We will use a variant of the notion of $\alpha$-geodesics, that takes angles into account. Constants are
not really relevant, but they need to be fixed by convention.
\begin{defi} \label{defi;slice}
Let $X$ be a graph, and $x,a\in X^{(0)}$. For $\rho\geq 0$ we denote by $\mathcal{E}_{a,x} (\rho)$ the set of edges $e$ with $d(a,o(e)) = \rho$ that are contained in geodesics from $a$ to $x$, or from $x$ to $a$. We say that $t\in X^{(0)}$ {\it belongs to an $\alpha$-conical-geodesic} between $x$ and $a$ if $t$ is on an $\alpha$-geodesic, $d(a,t)\leq d(a,x)$, and
$$ t\,\in\,    \bigcap_{e\in \mathcal{E}_{a,x}(d(a,t)) }
\Co_{40\alpha} (e).$$
In other words, points an an $\alpha$-conical-geodesic are off a geodesic only by a small angle. We will denote $U_{\alpha}[a,x]$ the set of points belonging to an $\alpha$-conical-geodesic between $x$ and $a$.

For each $\rho\geq 0$, we denote by $\mathcal{S}_{a,x}(\rho)$ the {\it slice of
$U_{\alpha}[a,x]$ at distance $\rho$ from $a$}, that is the set
$$\mathcal{S}_{a,x}(\rho)=\{t\in U_{\alpha}[a,x]\,|\,d(t,a)=\rho\}$$
 of points in $U_{\alpha}[a,x]$ at distance
$\rho$ from $a$.\end{defi}
We will now see some useful properties, namely that these sets are finite, stable (analogue of \cite[3.3]{AL} and \cite[3.4]{AL}), non-empty and slices at large angles are reduced to a point.
\begin{prop}\label{prop;recap}Let $X$ be a $\delta$-hyperbolic fine graph, and let $a,x \in X$ 
\begin{enumerate}
\item{\rm (Finiteness)}\label{lem;slices_are_finite} For any $\alpha>0$ the set
   $U_\alpha[a,x]$ is finite.   Each slice is contained in a cone of
   parameter $40\alpha$.
\item{\rm (Stability)}\label{lem;stab} If $\alpha =2\delta$, and  if $u\in U_{4\delta}[a,x]$ and $v\in U_{2\delta}[a,u]$ and if
  $d(u,v)\geq 5\delta$ then 
  $v\in U_{4\delta}[a,x]$.
\item\label{lem;slices_are_nonempty} 
 For all $v$ on a geodesic between $a$ and $x$, the slice of
  $ U_{2\delta}[a,x]$ at distance $d(a,v)$ from $a$ contains $v$,
  hence is non empty. 
\item\label{lem;un_goulet_de_slice}
If $\A_c(a,x) >(1000\delta)^2$, and $\alpha =2\delta$, then $\mathcal{S}_{a,x} (d(a,c))= \{c\}$.
\end{enumerate}
\end{prop}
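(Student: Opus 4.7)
The plan is to treat the four claims in order, with Parts (1) and (3) being mostly bookkeeping applications of fineness and of the conical thinness of triangles, while the real work sits in Parts (2) and (4).

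For Part (1), by the very definition of the slice, $\mathcal{S}_{a,x}(\rho) \subseteq \Co_{40\alpha}(e)$ for any $e \in \mathcal{E}_{a,x}(\rho)$; such an $e$ exists for every integer $\rho \in [0, d(a,x)]$ (take any geodesic between $a$ and $x$). Proposition \ref{prop;cones_are_nicer} makes each such cone finite in a fine graph, so each slice is finite, and since $\rho$ takes only finitely many values, so is $U_\alpha[a,x]$.

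For Part (3), given $v$ on a geodesic $g=[a,x]$, the $2\delta$-geodesic and distance conditions are immediate. For the cone condition, if $e \in \mathcal{E}_{a,x}(d(a,v))$ lies on $g$ then $v = o(e)$ and we are done; otherwise $e$ lies on a different geodesic $g'$, and conical thinness of the degenerate triangle made of $g$, $g'$ and a trivial side (Proposition \ref{prop;cones_are_nicer}) places the edge of $g$ incident to $v$ inside a $50\delta$-cone around an edge of $g'$ at the same distance from $a$. The slack up to $80\delta = 40\cdot 2\delta$ absorbs small angle corrections needed to pass between the two possible orientations of $e$ at its vertex, using Proposition \ref{prop;goulet}(2).

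For Part (2), the key inequality $d(a,v) + d(v,x) \leq d(a,x) + 4\delta$ requires a hyperbolic refinement: stacking $v \in U_{2\delta}[a,u]$ on $u \in U_{4\delta}[a,x]$ by triangle inequality alone yields only $6\delta$. Applying $\delta$-thinness of the triangle $[a,u,x]$ together with the hypothesis $d(u,v) \geq 5\delta$ confines $v$ within $\delta$ of a geodesic $[a,x]$ far from $u$, shaving the extra $2\delta$. For the $160\delta$-cone condition, the same hypothesis $d(u,v) \geq 5\delta$ guarantees that each edge of $\mathcal{E}_{a,x}(d(a,v))$ matches, via conical thinness of $[a,u,x]$, an edge of $\mathcal{E}_{a,u}(d(a,v))$ up to a $50\delta$-cone; composing this with the given $80\delta$-cone membership from $v \in U_{2\delta}[a,u]$ via Proposition \ref{prop;composition_of_cones} fits everything inside $\Co_{160\delta}$. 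The main nuisance here is constant bookkeeping.

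For Part (4), Propositions \ref{prop;goulet}(1) and (3) together force $c$ onto every geodesic $[a,x]$ and guarantee that any two first edges $\epsilon_a, \epsilon_x$ out of $c$ on such geodesics satisfy $\A_c(\bar\epsilon_a, \epsilon_x) \geq (1000\delta)^2 - 12\delta$. Both $\epsilon_a$ and $\epsilon_x$ lie in $\mathcal{E}_{a,x}(d(a,c))$, so any $t \in \mathcal{S}_{a,x}(d(a,c))$ sits in $\Co_{80\delta}(\epsilon_a) \cap \Co_{80\delta}(\epsilon_x)$. Assuming $t \neq c$, I would splice the two short cone-witnessing paths from $c$ to $t$ into a loop at $c$ of length at most $160\delta$ passing through $t$, and then truncate any self-intersection at $c$ so as to extract a simple loop beginning with $\epsilon_x$ and ending with an edge incoming from the $a$-side. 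Reading off the shortcut around $c$ inside this loop bounds $\A_c(\bar\epsilon_a, \epsilon_x)$ by at most $160\delta$, contradicting the huge lower bound. The hardest step will be engineering the truncation so the resulting simple loop certifies a small angle between the specific edges $\epsilon_a$ and $\epsilon_x$, rather than between some other pair produced by the detour.
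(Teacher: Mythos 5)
Parts (1) and (3) are fine and track the paper's reasoning closely. Part (2) is also on the right track; the paper delegates the $4\delta$-quasigeodesic estimate to a citation of \cite{AL} while you sketch it directly, and the cone-membership step in both approaches is conical thinness plus Proposition~\ref{prop;composition_of_cones}, with comfortable slack in the constants. (Do watch the direction of the cone relation: conical thinness places an edge of $[a,u]$ inside a cone around an edge of $[a,x]$, which is then composed with $v\in\Co_{80\delta}(\cdot)$; your phrasing ``each edge of $\mathcal{E}_{a,x}(d(a,v))$ matches ... an edge of $\mathcal{E}_{a,u}(d(a,v))$'' points the containment the wrong way, but this is cosmetic.)

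Part (4) has a genuine gap. You assert that splicing the two cone-witnessing paths from $c$ to $t$ yields a loop of length $\leq 160\delta$, and that ``reading off the shortcut around $c$ inside this loop bounds $\A_c(\bar\epsilon_a,\epsilon_x)$ by at most $160\delta$.'' That bound is only valid if the spliced loop revisits $c$ at most at its endpoints. Nothing in the definition of $\Co_{80\delta}(\epsilon_a)$ and $\Co_{80\delta}(\epsilon_x)$ keeps the witnessing paths away from $c$: they may pass through $c$ repeatedly, up to roughly $80\delta$ times, and at each internal visit the entry and exit edges at $c$ can differ by an angle as large as the cone parameter ($80\delta$). Chaining by the triangle inequality for angles over those passages gives a bound that is \emph{quadratic} in the cone parameter, not linear. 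This is precisely what Lemma~\ref{lem;thetasquare} records: the angle bound it proves is $(\theta^2+3\theta)/2$, and the paper's proof of Part (4) applies that lemma (with $\theta=200\delta$) rather than a direct loop-truncation argument. A useful sanity check is the hypothesis itself: if a $160\delta$ bound really held, a threshold of $200\delta$ would suffice, so the paper would never have written $(1000\delta)^2$. Your remark that ``the hardest step will be engineering the truncation'' correctly identifies the obstacle, but the resolution is not a cleverer truncation; it is accepting the quadratic angle estimate and using the quadratic threshold. You should invoke Lemma~\ref{lem;thetasquare} (or reprove it) to close this.
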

\begin{proof}
(1) By definition $U_{\alpha}[a,x]$ is contained in an intersection of cones, which are finite according to Proposition \ref{prop;cones_are_nicer}. Each slice, by definition, is contained in $U_{\alpha}[a,x]$, which is an intersection of a family of cones of parameter $40\alpha$.

(2) That $v$ satisfies $d(a,v) + d(v,x) \leq d(a,x)+4\delta$ is \cite[3.4]{AL}.
    Consider $y$ in a geodesic  $[a,u]$ such that $d(a,y) = d(a,v)$, and an edge $e$ of
    $[u,a]$ with origin $y$. Then $v\in \Co_{80\delta}(e)$.  

    We now apply conical thinness of triangles (Proposition
    \ref{prop;cones_are_nicer}),   to the triangle  $([a,x], [x,u], [u,a])$, for the edge  $e\in [a,u]$. Since $e$ is at least $5\delta$-far from $u$,  the Proposition ensures that there exists  an edge $e'$ of $[a,x]$ at same
    distance from $a$ than $e$, for which 
    $e\in \Co_{50\delta} (e')$. It follows, by Proposition
    \ref{prop;composition_of_cones},  that $v\in \Co_{130\delta}
    (e')$, which establishes the claim.

(3) Call $[a,x]_1$  the given geodesic containing $v$. The claimed statement  amounts to check that, if  $[a,x]_2$ is any geodesic between $a$ and $x$, and $e$ is an edge of $ [a,x]_2$ starting  at distance $d(a,v)$ from $a$, then   $v$ is in   $\Co_{80\delta}(e)$. This is an  application of  the conical thinness of geodesic triangles, Proposition \ref{prop;cones_are_nicer} to the degenerate triangle $([a,x]_1, [x,x], [x,a]_2)$ (where $[x,a]_2$ is $[a,x]_2$ with reverse orientation).

(4) Consider two edges $e,e'$ at $c$ (i.e $o(e)=o(e')=c$) on a geodesic
$[a,x]$, where $t(e) $ is closer to $a$ and $t(e')$ is closer to
$x$. By definition of slices, one has $ \mathcal{S}_{a,x} (d(a,c)) \subseteq \Co_{80\delta}(e) \cap \Co_{80\delta}(e') \cap \{t, d(c,t)\leq 20\delta \}$.  Note that it implies   $ \mathcal{S}_{a,x} (d(a,c)) \subseteq \Co_{80\delta+1}(\bar{e}) \cap \Co_{80\delta+1}(\bar{e'}) \cap \{t, d(c,t)\leq 20\delta\}$.  We will prove that this intersection is reduced to $\{c\}$.   

By Proposition \ref{prop;goulet}, $\A_c( e, e') >(999\delta)^2$. 
 We apply Lemma  \ref{lem;thetasquare} for $\theta=200\delta$. For all $v\in \Co_{200\delta}( \bar e)$ different of $c$, but at distance at most $20\delta$ from it, one has $\A_c(o(\bar e), v) \leq ((200\delta)^2 + 600\delta)/2$. If $v$ is in  $\Co_{200\delta}(\bar{e'})$ as well, then  $\A_c(o(\bar e), v) \leq ((200\delta)^2 + 600\delta)/2$. By triangular inequality of angles at $c$,  $\A_c(\bar e, e')\leq (200\delta)^2 + 600\delta$, and this contradicts the previous lower bound of $(999\delta)^2$.  Thus, the intersection of $\Co_{200\delta}(\bar e)$, $\Co_{200\delta}(\bar {e'})$ and of the ball of radius $20\delta$ around $c$ is reduced to $\{c\}$. 

Thus, only $c$ can be in the
slice $\mathcal{S}_{a,x} (d(a,c))$. Since the slice is non-empty, we have the result.  
\end{proof}
\section{The flow on the coned-off graph}
Let us recall that, given a set $Y$ and an additive semigroup $V$,  a {\it flow} is a
map 
$\mathscr{F}:  Y\times V \to Y$ such that for all $y$,
$\mathscr{F}(y,0)=y$ and 
$\mathscr{F}(y, t_1+t_2) = \mathscr{F}(\mathscr{F}(y, t_1), t_2)$.  In
this definition, the semi-group $V$ is usually $\mathbb{R}_+$, but we
will use $\mathbb{N}$ (by convention $0\in \mathbb{N}$), and in this case the flow is entirely determined by the {\it flow step}, which is the map $\mathscr{T}:  Y\to Y$ defined
by $\mathscr{T}  (y) = \mathscr{F}(y,1) $.

In the following, $X$ will be a $\delta$-hyperbolic fine graph (for
some integer $\delta >0$), and $\Proba(X)$ is the space of probability measures
supported on $X^{(0)}$.    
    We will
define a  flow for the semi-group
$\mathbb{N}$, on the set  $Y= \Proba(X) \times X^{(0)}$, which is
equivariant by the group of isometries of $X$.  We think
of it as a version of a geodesic flow.

When
$X$ is countable (which is the case in our context) 
elements of $\Proba(X)$ are just functions from $X^{(0)}$ to
$\mathbb{R}_+$ of sum $1$, but we find it convenient and enlightening
to see them as probability measures.
\subsection{The flow step}\label{sec;def_flowstep}
In this subsection we will define the flow step 
$$ T: \left\{ \begin{array}{ccc} \Proba(X) \times X^{(0)}  & \to
                & \Proba(X) \times X^{(0)}   \\ 
  (\eta, a) & \mapsto & (T_a(\eta), a) \end{array} \right.$$

First let us introduce some vocabulary. Given $a$ and $x$, we say
that the step $T_a$ at $x$ is 
\begin{itemize}
  \item {\it initial} if $d(a,x)>5\delta$,
and $d(a,x)$ is not a multiple of $5\delta$, 
\item {\it regular}  if  $d(a,x)>5\delta$
and $d(a,x)$ is a multiple of $5\delta$,
  \item {\it ending} if $a$ is of infinite valence and $1<d(a,x)\leq
  5\delta$, or if $a$ has finite valence, $d( a,x) \leq 5\delta$  and
  there exists $c$ such that  $\A_c(a,x)  > 1000\delta$,  
  \item {\it stationary} in  all other cases , {\it i.e} in any of the following three cases 
\begin{itemize} 
\item $x=a$, 
\item  $a$ is of infinite
    valence and $x$ is a neighbor  $a$, 
\item  $a$ is of finite valence, $d( a,x) \leq 5\delta$  and  for all
  $c$,  $\A_c(a,x) \leq1000\delta$.    
\end{itemize}
\end{itemize}
We  denote by
$\delta_x $ the Dirac mass at $x$ and define the flow step $T$ by defining $T_a(\delta_x)$, for $a, x$ two vertices of $X$ and then extend by linearity on probability measures:  $$T_a(
\sum_x \lambda_x \delta_x) = \sum_x \lambda_x T_a(\delta_x).$$

For the following definition, we set the parameter $\alpha$ of the definition of slices, to be $\alpha = 2\delta$. 
\begin{defi}\label{defi:flowstep}Let $X$ be a $\delta$-hyperbolic fine graph, and let $a,x\in X^{(0)}$. We set $r_{a,x}$ to be the largest integer  $r$
such that  $5\delta\,r < d(a,x)$.  
\begin{itemize}
   \item   If the step  $T_a$ at $x$ is either initial or regular, then 
     $T_a(\delta_x)$ is the 
     uniform probability measure supported by  the slice
     $\mathcal{S}_{a,x}=\mathcal{S}_{a,x}(5\delta\,r_{a,x})$ (as in Definition \ref{defi;slice}).
   \item If the step $T_a$ at $x$ is ending, and $a$ has infinite
     valence then $T_a(\delta_x)$ is the 
     uniform probability measure supported by  the slice
     $\mathcal{S}_{a,x} =  \mathcal{S}_{a,x}(1)$  at
     distance $1$ from $a$. If the step is ending and $a$ has finite
     valence, then   $T_a(\delta_x)$ is the Dirac mass supported on
     the unique $c$ minimizing $d(a,c)$ such that $\A_c(a,x)  > 900\delta$. 
   \item If the step $T_a$ at $x$ is stationary, then $T_a(\delta_x) =
     \delta_x$ (and $\mathcal{S}_{a,x} = \{x\}$).
\end{itemize}
\end{defi}
Let us check that this definition makes sense. The set $\mathcal{S}_{a,x}$ is well defined:
in the case of an ending step, any vertex $c$ such that $\A_c(a,x)
> 900\delta$ is on every geodesic between $a$ and $x$, thus there is a
unique one that is closest to $a$. Then the set $\mathcal{S}_{a,x}$ is never empty by Proposition \ref{prop;recap} part (\ref{lem;slices_are_nonempty}) and it is always finite, by part (\ref{lem;slices_are_finite}) of that same proposition. Observe that among iterates of the flow step applied at $(\delta_x,
a)$, only the first iteration can be initial, only one step among the iterates can be ending, and after the ending step, all steps are stationary.  

This defines uniquely the value of $T_a(\delta_x)$ for all $x$ and all
$a$ and so completely defines the flow step $T$, hence also the flow 
\begin{align*}\mathscr{F}:\Proba (X) \times X^{(0)}\times\mathbb{N}&\to\Proba (X) \times X^{(0)}\\
(\eta,a,k)&\mapsto (T^k_a(\eta),a)\end{align*}
\subsection{Stationarity and mask} For two points $x$ and $a$ in $X^{(0)}$, the flow step $T_a(\delta_x)$ is a probability measure whose support is closer to $a$ than $\delta_x$, so the flow step has pushed the point $x$ towards $a$. Iterating this flow step will eventually stall and give a measure that is close to $a$. 
\begin{prop}For $X$ a $\delta$-hyperbolic fine graph and all $a,x\in X^{(0)}$, the flow step from Definition \ref{defi:flowstep} is stationary in $k$:  if 
 $k>r_{a,x}+1$, one has $T_a^{r_{a,x} +1}  (\delta_x)=T_a^{k}  (\delta_x)$. \end{prop}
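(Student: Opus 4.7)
The plan is to show that $T_a^{r_{a,x}+1}(\delta_x)$ is a fixed point of $T_a$, so that $T_a^k(\delta_x)=T_a^{r_{a,x}+1}(\delta_x)$ for every $k>r_{a,x}+1$. Since $T_a$ is extended by linearity from Dirac masses and the support of every intermediate measure is finite (each slice being finite by Proposition~\ref{prop;recap}(\ref{lem;slices_are_finite})), it suffices to check that every Dirac mass $\delta_y$ appearing in the support of $T_a^{r_{a,x}+1}(\delta_x)$ satisfies $T_a(\delta_y)=\delta_y$.

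First I would track how the distance $d(a,\cdot)$ on the support decreases along the iterates. The first step, whether initial or regular, pushes the support to $\mathcal{S}_{a,x}(5\delta r_{a,x})$, at distance $5\delta r_{a,x}$ from $a$, and a regular step at any point $y$ with $d(a,y)=5\delta m>5\delta$ pushes the support onto the slice at distance $5\delta(m-1)$ from $a$. A straightforward induction on $k$ then yields that the support of $T_a^{k}(\delta_x)$ lies at distance $5\delta(r_{a,x}-k+1)$ from $a$ for $1\leq k\leq r_{a,x}$; in particular the support of $T_a^{r_{a,x}}(\delta_x)$ lies at distance at most $5\delta$ from $a$. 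The degenerate cases $r_{a,x}=0$ and $x=a$ are handled directly, since the first step is already ending or stationary.

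The $(r_{a,x}+1)$-th step applied to each $y$ in the support is therefore either stationary (giving $T_a(\delta_y)=\delta_y$, nothing to do) or ending, and the remaining task is to check that one further application of $T_a$ to an ending-step output is the identity. If $a$ has infinite valence this is immediate: the ending step produces a uniform measure on $\mathcal{S}_{a,y}(1)$, whose support consists of neighbors of $a$, and each such neighbor falls into the stationary sub-case ``$a$ of infinite valence, $x$ a neighbor of $a$''. The main obstacle is the finite-valence case: the ending step produces $\delta_c$ for the unique $c$ closest to $a$ with $\A_c(a,y)>900\delta$, and one must show that no $c'$ satisfies $\A_{c'}(a,c)>1000\delta$, so that the step at $c$ is indeed stationary.

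To settle this obstacle I would argue by contradiction. If such a $c'$ existed, Proposition~\ref{prop;goulet}(1) would place it on every geodesic from $a$ to $c$, forcing $c'\neq a,c$ and $d(a,c')<d(a,c)$. Since $\A_c(a,y)>900\delta>12\delta$, the same proposition puts $c$ on every geodesic from $a$ to $y$, so concatenating geodesics $[a,c']$, $[c',c]$ and $[c,y]$ yields a geodesic from $a$ to $y$ passing through $c'$. Any edges at $c'$ realizing $\A_{c'}(a,c)>1000\delta$ then also start geodesics from $c'$ to $y$, because an initial segment of a geodesic from $c'$ to $c$ is equally an initial segment of a geodesic from $c'$ to $y$; this gives $\A_{c'}(a,y)\geq\A_{c'}(a,c)>900\delta$, contradicting the minimality of $c$ in the definition of the ending step at $y$. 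This closes the main obstacle and the proposition follows.
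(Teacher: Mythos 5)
Your proof is correct, and it follows the same basic scheme as the paper --- tracking how the distance from $a$ to the support decreases along iterates of $T_a$ --- but it is substantially more careful, and it fills a genuine gap. The paper's proof only establishes that the flow is \emph{eventually} stationary (strict decrease in $d(a,\cdot)$ unless the step is stationary), and relies on the unproved observation, stated just after Definition~\ref{defi:flowstep}, that ``after the ending step, all steps are stationary.'' Your distance bookkeeping pins down the exact step count $r_{a,x}+1$, and your contradiction argument actually proves the hard part of that observation: if, after a finite-valence ending step producing $\delta_c$, some $c'$ had $\A_{c'}(a,c)>1000\delta$, then Proposition~\ref{prop;goulet}(1) places $c'$ strictly between $a$ and $c$ on every geodesic; since $c$ lies on every $[a,y]$, the concatenation $[c',c]\cdot[c,y]$ is a geodesic, so the edges witnessing $\A_{c'}(a,c)$ also witness $\A_{c'}(a,y)>900\delta$ with $d(a,c')<d(a,c)$, contradicting the minimality of $c$ in the definition of the ending step. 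The infinite-valence ending case is indeed immediate, as you say, since the output lands on neighbors of $a$. In short: correct, same approach, but you supply the precision and the missing justification that the paper's terse proof leaves implicit.
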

\begin{proof}
If $y, y'$ are in the support of $T_a(\delta_x)$, one has $d(a,y) = d(a,y') \leq d(a,x)$. In case of equality, then $T_a(x) = \delta_x$. Indeed, if  the flow step is initial or regular, then $d(a,y) = d(a,y') < d(a,x)$,
 since by definition, $5\delta\,r_{a,x} < d(a,x)$.   The ending step case, also induces a strict inequality, and the stationary step case
 induces the equality $T_a(x) = \delta_x$. Hence the flow eventually become stationary for $k$ large enough.
\end{proof}
\begin{defi}\label{flot}
Given any two vertices $a$ and $x$ of $X$ a $\delta$-hyperbolic fine graph, the {\it mask of $a$ for $x$} is the probability measure given by
$$ \mu_x(a)=T^{R(a,x)}_a (\delta_x) \,  =\,  \lim_{k\to\infty} T^k_a (\delta_x),$$
where $R(a,x)$ the minimal number of iterations of $T_a$ on $\delta_x$ so that the stationary step is reached. 
\end{defi}
Let us observe the following.
\begin{prop}\label{prop;theta0}Let $X$ be a $\delta$-hyperbolic and fine graph. For $a,x\in X^{(0)}$, then
\begin{enumerate}
\item If $a$ is of infinite valence, for all $x$, the measure $\mu_x(a)$ is supported by the $1$-neighborhood of $a$. Moreover, if $e$ is the first edge of a
geodesic segment $[a,x]$, the measure  $\mu_x(a)$ is supported in ${\Co}_{160\delta}(e)$.

\item If $a$ is of finite valence, for all $x$, the measure $\mu_x(a)$ is supported by ${\Co}_{\theta_0}(a)$ for $\theta_0=1160\delta$. \end{enumerate}
\end{prop}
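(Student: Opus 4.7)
The strategy is to unwind the iterated definition of $\mu_x(a)$ by tracing the flow. For each $v \in \mathrm{supp}(\mu_x(a))$, pick a chain $x = x^{(0)}, x^{(1)}, \ldots, x^{(R)} = v$ with $R = R(a,x)$ and $x^{(i+1)}$ in the support of $T_a(\delta_{x^{(i)}})$. All intermediate steps are initial or regular (so $x^{(i+1)} \in \mathcal{S}_{a, x^{(i)}} \subset U_{2\delta}[a, x^{(i)}]$ for $i \leq R-2$), and only the last step can be ending or trivially stationary. I would first prove by induction on $i$ that $x^{(i)} \in U_{4\delta}[a, x]$ for every $i \leq R-1$: the base case is trivial, and the inductive step applies Stability (Proposition \ref{prop;recap}(2)), which needs $d(x^{(i)}, x^{(i+1)}) \geq 5\delta$; this holds for regular steps since the distance to $a$ drops by exactly $5\delta$, while the unique initial step (at $i = 0$) lands directly in $U_{2\delta}[a, x]$.

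For (1), with $a$ of infinite valence, the trivial stationary cases are immediate, so assume the flow ends with an ending step. Then $v \in \mathcal{S}_{a, x^{(R-1)}}(1)$, so $d(a, v) = 1$, giving the $1$-neighborhood claim. For the cone claim, the edge $(a, v)$ is the first edge of $[a, x^{(R-1)}]$; applying conical thinness of geodesic triangles (Proposition \ref{prop;cones_are_nicer}) to the triangle $(a, x^{(R-1)}, x)$, this edge either sits in $\Co_{50\delta}(e)$ (done, since $50\delta \leq 160\delta$) or in $\Co_{50\delta}(e')$ for some edge $e'$ on $[x^{(R-1)}, x]$ within $5\delta$ of $x^{(R-1)}$. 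The second case is handled by combining the inductive fact $x^{(R-1)} \in \Co_{160\delta}(\tilde{e})$ (with $\tilde{e}$ the edge of $[a, x]$ at distance $5\delta$ from $a$) with composition of cones (Proposition \ref{prop;composition_of_cones}), splicing the geodesic prefix from $a$ to $\tilde{e}$ along $[a, x]$, the cone path from $\tilde{e}$ to $x^{(R-1)}$, and the short piece of $[x^{(R-1)}, x]$ from $x^{(R-1)}$ to $e'$, to place $(a, v)$ in $\Co_{160\delta}(e)$.

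For (2), with $a$ of finite valence, the flow ends either stationarily (so $v = x^{(R-1)}$ with $\A_c(a, v) \leq 1000\delta$ for every $c$) or by ending (so $v = c$ the closest vertex to $a$ with $\A_c(a, x^{(R-1)}) > 900\delta$; by Proposition \ref{prop;goulet}(1), $c$ lies on every geodesic $[a, x^{(R-1)}]$). In either case $v$ lies on a geodesic $[a, v]$ of length at most $5\delta$ whose intermediate angles $\A_{c'}(a, v)$ are bounded by $1000\delta$ (by hypothesis in the stationary case, by minimality in the choice of $c$ for the ending case), and Proposition \ref{prop;goulet}(3) translates this into a $1000\delta + O(\delta)$ bound on the angles between specific consecutive edges of $[a, v]$. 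This exhibits $v$ in $\Co_{1160\delta}(e_0')$ for the first edge $e_0'$ of $[a, v]$, whence $v \in \Co_{\theta_0}(a)$. The principal technical obstacle is the second case of conical thinness in part (1): because the ending step contracts the distance from $5\delta$ to $1$, the hypothesis $d(x^{(R-1)}, v) \geq 5\delta$ of Stability fails at the very last iterate, so $v$ itself is not automatically in $U_{4\delta}[a, x]$; the cone claim must be argued directly from conical thinness and compositions, and the bookkeeping of constants needed to keep the total cone parameter within $160\delta$ is the delicate point of the proof.
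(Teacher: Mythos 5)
Your overall architecture is the same as the paper's: iterate stability (Proposition \ref{prop;recap}(\ref{lem;stab})) along the regular steps, then treat the ending (or stationary) step separately. You correctly observe that the hypothesis $d(u,v)\geq 5\delta$ of stability fails at the ending step, and this is a genuine gap that the paper's one-line invocation of Proposition \ref{prop;recap}(\ref{lem;stab}) elides; a naive application cannot place the final slice at distance $1$ from $a$ into $U_{4\delta}[a,x]$.

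Your proposed fix for part (1), however, is incorrect as written. You assert that ``the edge $(a,v)$ is the first edge of $[a,x^{(R-1)}]$,'' but this fails in general: the slice $\mathcal{S}_{a,x^{(R-1)}}(1)$ lies in $U_{2\delta}[a,x^{(R-1)}]$, which includes points at distance $1$ from $a$ that sit only in a cone of parameter $80\delta$ around an edge of $\mathcal{E}_{a,x^{(R-1)}}(1)$, not on a geodesic from $a$ to $x^{(R-1)}$ itself. Thus $(a,v)$ is typically not an edge of the geodesic triangle $(a,x^{(R-1)},x)$, and Proposition \ref{prop;cones_are_nicer} does not apply to it directly. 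The correct patch is to first record that $v$ lies in $\Co_{80\delta}(e'')$ for an appropriate edge $e''$ of a geodesic $[a,x^{(R-1)}]$ adjacent to $a$, then apply conical thinness to $e''$ (which \emph{is} on the triangle) and compose cones; the constant bookkeeping then has to be redone to land within $160\delta$. For part (2), your argument takes a genuinely different route from the paper's: you bound angles along $[a,v]$ directly from the definition of the ending/stationary step and Proposition \ref{prop;goulet}, whereas the paper composes the cone $\Co_{160\delta}(e_0)$ obtained from stability with a $\Co_{1000\delta}(e)$ cone coming from the angle bound. Your route is defensible, but the ``by minimality'' step needs care: the stationary/ending condition bounds $\A_{c'}(a,x^{(R-1)})$, not $\A_{c'}(a,v)$, and passing from one to the other requires comparing first edges via Proposition \ref{prop;goulet}(2), which you should spell out.
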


The last estimate is very crude, and with some closer look, one is
likely to get a much more precise control.

\begin{proof}
(1) If $a$ has infinite valence, by definition of the ending step in this case the measure is in the slice at distance 1 from $a$. By Proposition \ref{prop;recap}(\ref{lem;stab}), the last slice used is in a $4\delta$-conical geodesic from $a$ to $x$. So, given $e$ which is first edge on a geodesic from $a$ to $x$, by the definition of slices, this last slice used is contained in ${\Co}_{160\delta}(e)$. 

(2) In the case of a finite valence vertex $a$,  the condition for the
step being stationary is that there are no angle larger than
$1000\delta$ between $a$ and a point in the support. Again,  by Proposition \ref{prop;recap}(\ref{lem;stab}),
the
support is contained in 
    a slice    is in a
    $4\delta$-conical geodesic from $a$ to $x$,   
hence in a cone $\Co_{160\delta} (e_0)$ centered at an
edge in some geodesic $[a,x]$, at distance at most $5\delta$ from
$a$. We moreover know, by definition of the ending step, that there is
no angle larger than $1000\delta$ between $a$ and $o(e_0)$. Thus, by
composition of cones, Proposition \ref{prop;composition_of_cones}, 
 the support is contained in  $\Co_{1160\delta} (e)$ for an initial edge
of a geodesic $[a, x]$.
\end{proof}
The definition of the flow step being in a purely metric way, as a consequence the masks are equivariant with respect to the isometry group. 
\begin{prop}\label{prop;mask_equivariance}Let $X$ be a $\delta$-hyperbolic fine graph.
The map 
$$X^{(0)}\times X^{(0)}\to\Proba (X),\ (x,a)\mapsto \mu_{x}(a)$$
that associates to a pair of points $(x,a)$ the mask of $a$ for $x$ is
equivariant with respect to the isometry group of $X$. More precisely,
if $\phi$ is an isometry of $X$, one has $\phi_*\mu_{x}(a) = \mu_{\phi (x)} (\phi(a))$.
\end{prop}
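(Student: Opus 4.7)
The plan is to observe that every ingredient used to define the flow step in Section~\ref{sec;def_flowstep} is defined purely in terms of the metric structure of $X$, hence is preserved by any isometry $\phi$. The argument then reduces to an induction on the number of iterations of the flow step.

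First I would verify invariance of the auxiliary notions. Distances are obviously preserved, and the definition of angle $\A_v(e_1,e_2) = d_{X\setminus\{v\}}(o(e_1),t(e_2))$ is purely metric, so $\A_{\phi(v)}(\phi(e_1), \phi(e_2)) = \A_v(e_1,e_2)$. Cones $\Co_\theta(e)$ are defined via lengths of paths and angles between consecutive edges, hence $\phi(\Co_\theta(e)) = \Co_\theta(\phi(e))$. The set $\mathcal{E}_{a,x}(\rho)$ of edges on geodesics from $a$ to $x$ at distance $\rho$ from $a$ is sent by $\phi$ to $\mathcal{E}_{\phi(a),\phi(x)}(\rho)$, which combined with the preservation of cones gives $\phi(U_\alpha[a,x]) = U_\alpha[\phi(a),\phi(x)]$ and in particular $\phi(\mathcal{S}_{a,x}(\rho)) = \mathcal{S}_{\phi(a),\phi(x)}(\rho)$.

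Next I would check that the \emph{type} of step (initial, regular, ending, stationary) at the pair $(a,x)$ coincides with the type at $(\phi(a), \phi(x))$: this is immediate since the classification depends only on $d(a,x)$, on whether $a$ has infinite valence (an isometry-invariant property, since valence is metric), and on the existence of a vertex $c$ with $\A_c(a,x) > 1000\delta$. The integer $r_{a,x}$ is clearly equal to $r_{\phi(a),\phi(x)}$. Comparing the three cases of Definition~\ref{defi:flowstep} term by term, the slice (or the Dirac at the unique closest $c$ with large angle, or the Dirac at $x$ itself) is mapped by $\phi_*$ to the corresponding object based at $\phi(a), \phi(x)$; in each case the uniform measure is preserved because $\phi_*$ sends a uniform measure on a finite set to the uniform measure on its image. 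This yields
\[
\phi_* T_a(\delta_x) \,=\, T_{\phi(a)}(\delta_{\phi(x)}).
\]

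Extending by linearity, $\phi_* T_a(\eta) = T_{\phi(a)}(\phi_* \eta)$ for every $\eta \in \Proba(X)$. An easy induction on $k$ then shows $\phi_* T_a^k(\delta_x) = T_{\phi(a)}^k(\delta_{\phi(x)})$. Finally, since $R(a,x)$ is defined as the first index at which the flow becomes stationary, and this condition is preserved by $\phi$, we get $R(\phi(a),\phi(x)) = R(a,x)$, and passing to the stationary value yields $\phi_* \mu_x(a) = \mu_{\phi(x)}(\phi(a))$. There is no real obstacle: the statement is a formal consequence of the fact that the construction is carried out entirely in the language of the metric, angles, and cones, each of which is an isometry invariant; the only mild care required is in the bookkeeping for the ending step when $a$ has finite valence, where one must check that the \emph{unique} $c$ minimizing $d(a,c)$ among vertices with $\A_c(a,x) > 900\delta$ is sent by $\phi$ to the analogous unique vertex for $(\phi(a), \phi(x))$, which is clear from the invariance of angle and distance.
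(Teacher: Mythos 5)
Your proof is correct and takes exactly the approach the paper uses: the paper dispatches this proposition with the one-line remark that "the definition of the flow step being in a purely metric way, as a consequence the masks are equivariant," and your write-up is simply a careful spelling-out of that observation, verifying invariance of angles, cones, slices, step types, and then concluding by induction on the number of flow steps.
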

%
\subsection{The flow stays focused} In this subsection we will be recording the behavior of the flow step at different stages
\begin{lemma}\label{focusLemma}Let $X$ be a $\delta$-hyperbolic fine graph, and for $a\in X^{(0)}$ let $T_a$ denote the flow step as in Definition \ref{defi:flowstep}. Let $x,x'\in X^{(0)}$.
\begin{enumerate}
\item \label{lem;stayfocus_regular}
If $x$ is such that the $k$ first iterates of $T_a$ are initial or regular steps, then, for any two points $y,y'$ in the support of $T^k_a(\delta_x)$, one has
$d(y,y') \leq 5\delta$. Moreover the support of $T^k_a(\delta_x)$ is contained in a cone of parameter $160\delta$. 
\item \label{lem;focus_initial} If $x,x'$ are neighbors, and $5k\delta<d(a,x)\leq d(a,x') < 5(k+1)\delta$, then the union of the supports of $T_a(\delta_x)$ and
$T_a(\delta_{x'})$ has diameter at most $8\delta+1$.
\item \label{lem;elementary_confluence} If $d(x,x')\leq 8\delta$, with $d(x,a) = d(x',a)$ and assume that for both of them, the flow step is regular. Then 
the union of the supports of both measures  $T_a(\delta_x)$  and
$T_a(\delta_{x'})$ has diameter at most $8\delta$. Moreover, the union of these supports is contained in a cone of
parameter $170 \delta$, and their intersection is not empty. 

\item \label{prop;firststep} If $x,x'$ are neighbors and both at at distance larger than $5\delta$ from $a$, then there are two exponents $\epsilon,
\epsilon'$ that are either $1$ or $0$, such that
$T^\epsilon_a(\delta_x)$ and $T^{\epsilon'}_a(\delta_{x'})$ have support
on a sphere centered at $a$ of radius $5k\delta$ for some $k$,
and the diameter of their union is at most $8\delta+2$.
\end{enumerate}
\end{lemma}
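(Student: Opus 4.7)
The plan is to derive each of the four claims directly from the construction of the flow step together with the stability and conical-thinness properties of $\alpha$-conical-geodesics collected in Proposition \ref{prop;recap} and the composition of cones in Proposition \ref{prop;composition_of_cones}. The key preliminary observation is that whenever the step $T_a$ at $x$ is initial or regular, the support of $T_a(\delta_x)$ sits exactly on the sphere of radius $5\delta\,r_{a,x}$ around $a$, so iterated such steps move the support through spheres of radii $5\delta(r_{a,x}-k+1)$.

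For part (1) I would argue by induction on $k$. Iterating Proposition \ref{prop;recap}(\ref{lem;stab}) shows that the support of $T_a^k(\delta_x)$ lies inside $U_{4\delta}[a,x]$; by definition of $4\delta$-conical-geodesic this support is then contained in the intersection of cones $\Co_{160\delta}(e)$ taken over edges $e$ of some geodesic $[a,x]$, and composing with an initial edge of $[a,x]$ via Proposition \ref{prop;composition_of_cones} produces the stated $160\delta$ cone. For the pointwise diameter, two support points $y,y'$ sit at the same distance $\rho$ from $a$ on $4\delta$-geodesics toward $x$; a Gromov-product estimate at $x$ gives $(y|y')_x \geq d(a,x)-\rho-O(\delta)$ and, after careful bookkeeping with $\alpha=2\delta$, yields $d(y,y')\leq 5\delta$.

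For parts (2) and (3), the plan is to compare the two slices supporting $T_a(\delta_x)$ and $T_a(\delta_{x'})$ via the thin triangle with vertices $a,x,x'$. In (2), the two supports lie on the common sphere of radius $5k\delta$ around $a$ since $r_{a,x}=r_{a,x'}=k$; the condition $d(x,x')=1$ gives $(x|x')_a\geq 5k\delta-\tfrac{1}{2}$, and combining this with the $2\delta$-geodesic-defining property of slices yields the diameter bound $8\delta+1$. In (3), the hypothesis $d(a,x)=d(a,x')=5k\delta$ together with $d(x,x')\leq 8\delta$ forces $(x|x')_a\geq 5k\delta-4\delta$, which strictly exceeds the slice radius $\rho = 5(k-1)\delta$; a Gromov-product argument again controls the diameter by $8\delta$, and the $170\delta$ cone follows by composing a $160\delta$-cone around a common initial edge with a $10\delta$ shift coming from the thin-triangle fellow-travel between $[a,x]$ and $[a,x']$. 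The main obstacle is the non-emptiness of the intersection in (3): I would pick a vertex $v$ on some geodesic $[a,x]$ at distance $\rho$ from $a$, which lies in $\mathcal{S}_{a,x}(\rho)$ by Proposition \ref{prop;recap}(\ref{lem;slices_are_nonempty}), and use $\rho<(x|x')_a$ together with fineness of $X$ to select a geodesic $[a,x']$ passing through $v$, so that $v$ also belongs to $\mathcal{S}_{a,x'}(\rho)$.

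For part (4), I would split on the relative position of $d(a,x)$ and $d(a,x')$, which differ by at most $1$ since $x,x'$ are neighbours. If both distances lie in the same half-open interval $(5k\delta,5(k+1)\delta]$ then $r_{a,x}=r_{a,x'}=k$, and setting $\epsilon=\epsilon'=1$ reduces to part (2); if the distances coincide and are an exact multiple $5k\delta$, one reduces to part (3). Otherwise they straddle a multiple $5k\delta$, and taking $\epsilon=0$ at the endpoint of exact distance $5k\delta$ and $\epsilon'=1$ at its neighbour (or the symmetric choice) aligns both supports on the sphere of radius $5k\delta$. The neighbour relation places the Dirac-mass point on a geodesic from $a$ to the further neighbour, so the union reduces to a single slice of diameter at most $5\delta$ by part (1), from which the bound $8\delta+2$ follows.
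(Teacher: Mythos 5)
The overall strategy (stability, thin-triangle comparison, case split on step types) matches the paper's, but there is a genuine gap in part (3), and several details are off elsewhere.

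The serious problem is the non-emptiness argument in part (3). You propose to take $v$ on a geodesic $[a,x]$ at the slice distance $\rho$, observe that $\rho<(x|x')_a$, and then ``use fineness of $X$ to select a geodesic $[a,x']$ passing through $v$.'' This step fails: a large Gromov product only guarantees that $v$ is $\delta$-close to some geodesic $[a,x']$, not that it lies on one, and fineness gives no control of this sort (it limits the number of edges with bounded angle, it does not produce geodesics through prescribed nearby points). The paper's argument is essentially different in kind: it does \emph{not} claim $v\in[a,x']$, but rather that $v\in U_{2\delta}[a,x']$, which is the weaker membership the definition of slices actually requires. Concretely, one takes the starting vertex $o(e)$ of a slice edge on $[a,x]$, uses $\delta$-thinness to see that $o(e)$ is within $\delta$ of $[a,x']$ (so it satisfies the $2\delta$-geodesic inequality toward $x'$), and separately shows $o(e)\in\Co_{10\delta}(e')\subseteq\Co_{80\delta}(e')$ for every edge $e'$ at the relevant distance on every geodesic $[a,x']$. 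Both the metric inequality and the angular cone condition must be checked against \emph{all} geodesics $[a,x']$; passing through one geodesic would not even suffice. Without this, the intersection claim of (3) — and hence the confluence machinery downstream — is not established.

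Two smaller issues. In part (1) you place the $160\delta$ cone ``around an initial edge of $[a,x]$''; that is not what the lemma asserts and not what the slice estimate gives. The slice sits in a cone of parameter $40\alpha=160\delta$ centered at an edge of a geodesic $[a,x]$ at distance $\rho$ from $a$ (i.e.\ at the slice radius), not at the initial edge; composing all the way back to the initial edge would inflate the parameter far beyond $160\delta$. The statement only requires some cone of parameter $160\delta$, and the slice already lives in one. Also, your Gromov-product route to the $5\delta$ diameter bound in (1), as sketched, yields $d(y,y')\leq 6\delta$: from $d(y,x),d(y',x)\leq d(a,x)-\rho+4\delta$ one gets $(y\,|\,x)_a,(y'\,|\,x)_a\geq\rho-2\delta$, hence $(y\,|\,y')_a\geq\rho-3\delta$ and $d(y,y')\leq 6\delta$. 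The paper uses the four-point inequality directly,
$d(a,x)+d(y,y')\leq\max\{d(a,y)+d(y',x),\,d(a,y')+d(y,x)\}+\delta\leq d(a,x)+4\delta+\delta$,
which gives the tighter $5\delta$ that is then used (with the $\alpha=2\delta$ variant) to obtain the $8\delta$ and $8\delta+1$ bounds in (2)–(3); with the looser $6\delta$ your constants in (2) and (3) would drift above the stated ones. Your part (4) argument is otherwise in the spirit of the paper, and the observation that in the straddling case the Dirac-mass point lies on a geodesic toward the farther neighbour (so belongs to the slice) is a fine variant, but the earlier gaps in (1) and especially (3) need to be closed first.
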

\begin{proof}
(1) By Proposition \ref{prop;recap}(\ref{lem;stab})   $y$ and $y'$ are on $U_{4\delta}[a,x]$.
As in \cite{AL} we can estimate:  
$$d(a,x) + d(y,y')  \leq \max \{   d(a,y) + d(y',x),
\,   d(a,y') + d(y,x) \}  +\delta \; \leq d(a,x) + 4\delta  +\delta.$$
Hence $d(y,y') \leq 5\delta$. Then, observe that by Proposition \ref{prop;recap}(\ref{lem;stab}) the support of
$T^k_a(\delta_x)$ is in a slice of  $U_{4\delta} [a,x]$.
Proposition \ref{prop;recap}(\ref{lem;slices_are_finite}) concludes. 

\smallskip

(2) By assumption the steps of the flow are initial. Let  $y_0$ and $y'_0$ be respectively on geodesics $[x,a]$ and $[x',a]$ at distance $5k\delta$ from $a$. By $\delta$-hyperbolicity, they are at distance less than $2\delta+1$ from each other. Let $y,  y'$ respectively be in the supports of  $T_a(\delta_x)$ and $T_a(\delta_{x'})$.    Apply the argument of part (\ref{lem;stayfocus_regular}) using  that $y$ and $y'$ are on $2\delta$-geodesics (there is only one step of flow). One obtains that $d(y,y_0) \leq 3\delta$ and $d(y', y_0') \leq 3\delta$, hence $d(y,y')\leq 8\delta+1$.

\smallskip

(3) Here $x$ and $x'$ are not assumed neighbors, but at distance $5(k+1)\delta$ from $a$. Let $y_0$ and $y'_0$ respectively be on geodesics $[x,a]$ and $[x',a]$ at distance $5k\delta$ from $a$. By $\delta$-hyperbolicity, they are at distance less than $2\delta$ from each other and the same argument than part (\ref{lem;focus_initial}) gives a bound of $8\delta$ on the union of the supports of $T_a(\delta_x)$ and $T_a(\delta_{x'})$


Given two geodesics $[a,x]$ and $[a,x']$, we denote by $e, e'$ the edges of these geodesics starting at distance $\left( d(a,x) -5\delta\right)$ from $a$. By $\delta$-hyperbolicity, the two segments stay $2\delta$-close for a length of at least $\delta$ after $e$ and $e'$. One deduces (as in the proof of
Proposition \ref{prop;goulet}) that $e\in \Co_{10\delta}(e')$. Therefore, by composition of cones (Proposition \ref{prop;composition_of_cones}) $\Co_{160\delta}(e) \subseteq \Co_{170\delta} (e')$. Moreover, $o(e)$ is at distance at most $\delta$ from $[a,x']$, so is on a $2\delta$-geodesic between $a$ and $x'$. Also,   as we discussed, it is  in   $\Co_{10\delta}(e')$, for any edge $e'$ on any geodesic  $[a,x']$ satisfying that $e'$ starts at  $\left( d(a,x) -5\delta\right)$ from $a$.  Thus,    $o(e) \in U_{2\delta}[a,x']$. and hence by Proposition \ref{prop;recap}(\ref{lem;slices_are_nonempty}) the
intersection of the supports is non-empty.

\smallskip

(4) There are several cases, depending whether the flow from $x$
respectively from $x'$ needs an initial step or not, in other words,
whether $x$, respectively $x'$ are at distance strictly greater than $5k\delta$
or equal $5k\delta$ from $a$.

If the first step of the flow is initial for both $(\delta_x, a)$ and
$(\delta_{x'}, a)$, then part (\ref{lem;focus_initial}) says that the union of
the supports of $T_a(\delta_x)$ and $T_a(\delta_x)$  has diameter at
most $8\delta$. The same is true according to part (\ref{lem;elementary_confluence}) if both first steps of the flow are
regular. 

Assume that $d(a,x) =  5k\delta$ and $d(a,x') >5k\delta$. Then we compare $\delta_x$ (i.e $\epsilon =0$)  with
$T_a(\delta_{x'})$ (i.e $\epsilon' =1$). According to part (\ref{lem;stayfocus_regular}) the support of  $T_a(\delta_{x'})$ has diameter
at most $5\delta$, and by Proposition \ref{prop;recap}(\ref{lem;slices_are_nonempty}) it contains a point at
distance $1$ from $x'$, hence at distance less than $2$ from $x$, the union  of
the supports of $T^\epsilon_a(\delta_x)$ and
$T^{\epsilon'}_a(\delta_{x'})$ has diameter at most $5\delta+2$, hence
the result.
\end{proof}
\subsection{Check points at large angles}
The following proposition says that when the flow passes a large
angle, this large angle acts as a check point: The flow could have started at this large angle, and still give the
same result.
\begin{prop} \label{prop;checkpoint} 
Let $(X,d)$ a $\delta$-hyperbolic fine graph. Let $a, x$ be vertices in $X$.  If there exists $c\in [a,x]$ such that $\A_c(a,x)>(2000 \delta)^2$, then for any
$x'$ at distance $1$ from $x$, we have that 
$$\mu_x(a) = \mu_{x'}(a)= \mu_c(a).$$
\end{prop}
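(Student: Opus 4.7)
The plan is to exploit that $c$ is an unavoidable checkpoint on every geodesic from $a$ to $x$ (or to $x'$), so that the flow eventually forgets its initial point and behaves as if started at $c$.

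First, I would transfer the angle bound from $x$ to $x'$: since $d(x,x')=1$, the triangle $c,x,x'$ has one side of length one, so by $\delta$-hyperbolicity the first edges at $c$ of geodesics $[c,x]$ and $[c,x']$ differ only by an angle of order $\delta$ (as in Proposition \ref{prop;goulet}(2)). Combined with Proposition \ref{prop;goulet}(3), the cushion built into the hypothesis $(2000\delta)^2$ yields $\A_c(a,x') > (1000\delta)^2$. By Proposition \ref{prop;goulet}(1), every geodesic from $a$ to $x$ or to $x'$ then passes through $c$, and by Proposition \ref{prop;recap}(\ref{lem;un_goulet_de_slice}) we have $\mathcal{S}_{a,x}(d(a,c)) = \mathcal{S}_{a,x'}(d(a,c)) = \{c\}$. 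It therefore suffices to prove the single statement that for any vertex $y$ with $c\in[a,y]$ and $\A_c(a,y) > (1000\delta)^2$, one has $\mu_y(a) = \mu_c(a)$; applying this to $y=x$ and $y=x'$ gives the proposition.

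For this key statement, my goal would be to establish a slice identity $\mathcal{S}_{a,y}(5\delta r) = \mathcal{S}_{a,c}(5\delta r)$ for every multiple $5\delta r < d(a,c)$. Since every geodesic $[a,y]$ passes through $c$, the edge set $\mathcal{E}_{a,y}(\rho)$ coincides with $\mathcal{E}_{a,c}(\rho)$ for every $\rho \leq d(a,c)$, which makes the cone-intersection part of the slice definition agree. The inclusion $\mathcal{S}_{a,c}(\rho) \subseteq \mathcal{S}_{a,y}(\rho)$ is immediate from the equality $d(y,a)=d(y,c)+d(c,a)$. The reverse inclusion is delicate: the cone condition a priori only places a slice element on an $O(\delta)$-geodesic from $c$, so one must use the full $(2000\delta)^2$ bound to pin it onto a genuine $2\delta$-geodesic from $c$. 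Once the slice identity is in hand, I would iterate the flow $T_a$ from $\delta_y$ until its support first reaches the slice level $5\delta r_{a,c}$; by induction, using the slice identity together with the stability Proposition \ref{prop;recap}(\ref{lem;stab}), this iterate coincides with $T_a(\delta_c)$ as a uniform measure on the common slice. The semigroup property of the flow then gives $T_a^{k+j}(\delta_y) = T_a^{1+j}(\delta_c)$ for every $j\geq 0$, and passing to the limit yields $\mu_y(a) = \mu_c(a)$.

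The main obstacle will be the reverse slice inclusion and the propagation of uniformity along the descending flow. A further subtlety arises when $d(a,c)$ is not a multiple of $5\delta$, so that the flow skips over $c$ rather than landing on it directly; one must then argue that the slice at $5\delta r_{a,c}$, just below $c$, already captures the relevant dynamics regardless of whether one started at $y$ or at $c$.
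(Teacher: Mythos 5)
Your reduction to the single statement ``if $c\in[a,y]$ and $\A_c(a,y)$ is large, then $\mu_y(a)=\mu_c(a)$'', applied to $y=x$ and $y=x'$ after transferring the angle bound across the length-one edge $xx'$, is the same skeleton as the paper's proof. But the way you propose to realize the key step contains a genuine gap, and I think the approach would not go through as sketched. You aim to prove a slice identity $\mathcal{S}_{a,y}(5\delta r)=\mathcal{S}_{a,c}(5\delta r)$ for \emph{all} levels $5\delta r<d(a,c)$ and then ``propagate uniformity along the descending flow''. Two problems. First, the slices that actually appear in the iterated flow are not $\mathcal{S}_{a,y}(\cdot)$: once you pass the first step, $T^k_a(\delta_y)$ is a convex combination $\sum_b w_b\, T_a(\delta_b)$ over points $b$ in the support of $T^{k-1}_a(\delta_y)$, and each $T_a(\delta_b)$ is the uniform measure on $\mathcal{S}_{a,b}(5\delta r_{a,b})$, a slice associated to $b$, not to $y$. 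Your identity between $\mathcal{S}_{a,y}$ and $\mathcal{S}_{a,c}$ is therefore not the quantity that controls the next flow step. Second, and more seriously, the iterated measure $T^k_a(\delta_y)$ is \emph{not} uniform on a slice at intermediate levels in general; the weights are path-dependent. So there is nothing to propagate. The entire point of Proposition \ref{prop;checkpoint} (the ``checkpoint'' in the section title) is that the collapse happens exactly \emph{at} $c$, regardless of the history of the measure above $c$.

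The paper's actual mechanism, which your plan is missing, is this: one first shows (by Proposition \ref{prop;recap}(\ref{lem;slices_are_nonempty}) plus the cone bound Lemma \ref{lem;thetasquare}) that \emph{every} point $b$ in the support of each iterate $T^r_a(\delta_y)$, as long as that support lies beyond $c$, still sees a huge angle $\A_c(a,b)$. Then, at the one step where the target level crosses $d(a,c)$, either $d(a,c)$ is a multiple of $5\delta$, in which case Proposition \ref{prop;recap}(\ref{lem;un_goulet_de_slice}) forces $\mathcal{S}_{a,b}(d(a,c))=\{c\}$ for every such $b$, so the mixture $\sum_b w_b T_a(\delta_b)$ collapses to $\delta_c$ outright (uniformity is automatic because the slice is a single point); or $d(a,c)$ is not a multiple of $5\delta$, in which case $\mathcal{E}_{a,b}(\text{target level})=\mathcal{E}_{a,c}(\text{target level})$ for every $b$ in the support, so each $T_a(\delta_b)$ equals $T_a(\delta_c)$, and again $\sum_b w_b T_a(\delta_b)=T_a(\delta_c)$ because $\sum_b w_b=1$. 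The convexity of the mixture is what saves you; there is no need for (and no truth to) uniformity of $T^k_a(\delta_y)$ before that step. You correctly flag the ``propagation of uniformity'' as an obstacle, but it is not an obstacle to be overcome --- it is a sign that the argument should be reorganized so as not to need it.
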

\begin{proof}Define $r(c)$ to be the largest $r$ so that  $(r_{a,x}  -r )5\delta\geq d(a,c)$. First notice that for all $r\leq r(c)$, and  all $b$ in the support of $ T^r_a(\delta_x) $, one has 
$$\A_c(a,b)>(1910\delta)^2.$$ 
Indeed, let $b$ in the support of $ T^r_a(\delta_x) $. Then $d(a,b) =  (r_{a,x}  -r )5\delta  >d(a,c)$, and by Proposition \ref{prop;recap}(\ref{lem;slices_are_nonempty}) there is $b'$
on the geodesic $[a,x]$ realizing the maximal angle at $c$, that is
  in the support of $ T^r_a(x) $. Hence   $\A_c(a,b')
  >(2000\delta)^2$. Since $b'$ and $b$ are in a same cone of parameter
  $160\delta$ (Proposition \ref{prop;recap}(\ref{lem;slices_are_finite})),  the maximal angle between then is at most
  $2(160\delta)^2$ (by Lemma \ref{lem;thetasquare}). Thus,
  $\A_c(a,b)>(1987\delta)^2>(1910\delta)^2$ as claimed.

We now show that $T^{r(c)+1}_a(\delta_x)=T_a(\delta_c)$. If $d(a,c)$ is a multiple of $5\delta$, for any point $b$ in the support of $ T^{r(c)-1}_a(\delta_x)$ then $\A_c(a,b)>(1910\delta)^2$. Thus, all geodesics from $a$ to $b$
  pass through $c$ and make an angle at least $(1900\delta)^2$. 
      It follows that the intersection of all cones of
  parameter $80\delta$ centered at an edge of
  $\mathcal{E}_{a,b}(d(a,c))$ is reduced to $\{c\}$ (by Proposition \ref{prop;recap}(\ref{lem;un_goulet_de_slice})), hence $T_a(\delta_b) = \delta_c$, which means that
  $T^{r(c)}_a(\delta_x) = \delta_c$, and $T^{r(c)+1}_a(\delta_x)=T_a(\delta_c)$ as claimed. When  $d(a,c)$ is not a multiple of $5\delta$,  apply $T_a$ to all points $b$ in the support
  of  $ T^{r(c)}_a(\delta_x) $. But since we saw that $\A_c(a,b)>(1910\delta)^2$, all geodesics from $a$ to $b$
  pass through $c$ and make an angle at least $(1900\delta)^2$. The
  points on
  $2\delta$-geodesics from $a$ to $c$ are exactly the points on
  $2\delta$-geodesics from $a$ to $b$ that are at distance at most
  $d(a,c)$ from $a$, and we have equality of the sets $\mathcal{E}_{a,b}((r_{a,x}- r(c))5\delta)=\mathcal{E}_{a,c}((r_{a,x}- r(c))5\delta)$. Hence  $T_a(\delta_b) = T_a(\delta_c)$, and and $T^{r(c)+1}_a(\delta_x)=T_a(\delta_c)$ again as claimed. 
   
 Finally, if $x'\neq c$ and $x'$ at distance $1$ from $x$, then, for $r'(c)$ defined similarily to $r(c)$ for $x'$ we have 
$$T^{r(c)+1}_a(\delta_x) = T^{r'(c)+1}_a(\delta_{x'}) = T_a(\delta_c).$$
The angle at $c$ between $a$ and $x'$ can be reduced by one unit, which is still well above the threshold to apply the previous argument. Iterating the flow step gives the conclusion of the proposition.
\end{proof}
\subsection{Confluence}
We will need to control that, given $a$, and $x,x'$ close to each
other, but ``far'' from $a$ (in terms of distance or of angles), each of the regular flow steps from $(\delta_x, a)$
and from $(\delta_{x'}, a)$,  are uniformly close. To do that, we will be talking of confluence. Recall that the norm $\|\eta\|$  of a measure is defined as its $\ell^1$-norm $\|\eta\| = \sum_{x\in X} |\eta(x)|$. For $\eta, \eta'$ two probability measures on $X$, denote by
$\mathscr{M}(\eta,\eta')$ the measure defined by 
$$\mathscr{M}(\eta,\eta') (\{y\}) = \min \{ \eta (\{y\}), \eta'(\{y\}) \}.$$
We also define the {\it symetric difference} of $\eta$ and $\eta'$ as 
$$(\eta  \Delta\eta' ) = \eta+\eta' - 2\mathscr{M}(\eta,\eta'),$$
which is always a positive measure since $\eta, \eta'$ are
probability measures. Moreover,  $\|\eta  \Delta\eta'\| = \| \eta - \eta'\|$.
 \begin{defi}\label{defconf}
Let $\beta \geq 0$. We say that the flow step $T_a$ is {\it $\beta$-confluent} on $(\eta,
\eta')$ if 
$$\|   \mathscr{M}  (T_a (\eta),  T_a(\eta'))     \| \geq
    \| \mathscr{M}  (\eta, \eta') \| +   \beta  \|  \eta \Delta
    \eta'\|.$$  
%
\end{defi} 
\begin{rem}\label{betterconf}Notice that the flow step $T_a$ is $\beta$-confluent on $(\eta,\eta')$ if and only if   
$$\|  T_a(\eta)   \Delta  T_a(\eta')   \|   \leq (1- 2\beta)  \|  \eta   \Delta
 \eta'  \|.$$
Indeed, observe that $\|  T_a(\eta)   \Delta
 T_a(\eta')   \|   = 2- 2 \|  \mathscr{M}  (T_a (\eta),  T_a(\eta'))
 \|$, and similarily  $\| \eta   \Delta
 \eta'   \|   = 2 - 2 \mathscr{M}  (\eta, \eta')$. After substitution, and dividing by $2$, the above condition reads: 
$$     1 -  \|  \mathscr{M}  (T_a (\eta),  T_a(\eta'))
 \|    \leq    ( 1 -  \|\mathscr{M}  (\eta, \eta')  \|)     - \beta   \|  \eta   \Delta
 \eta'  \|    $$   which is equivalent to the inequality of the
        definition. 
\end{rem}
\begin{prop} \label{prop;decay_in_C} Let $X$ be a fine $\delta$-hyperbolic graph with a co-finite group action. Let $k\geq 1$, and $\eta, \eta'$ be two measures whose supports are on $S(a,5(k+1)\delta)$, and have union of diameter less than $8\delta$.

Then, there is a constant $C>1$ such that the $k$ consecutive flow steps on $(\eta, a)$ and $(\eta',a)$ are $\frac{1}{C}$-confluent. In particular, for all $m\geq k$, 
 $$   \|  T^m_a(\eta)   \Delta
 T^m_a(\eta')   \|   \leq  \left( 1-\frac{2}{C}\right)^k  \|  \eta   \Delta
 \eta'  \|.   $$
\end{prop}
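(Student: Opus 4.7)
The plan is to establish a single-step confluence rate $\beta = 1/C$ uniform in $k$ and in $(\eta,\eta')$, and then iterate $k$ times. The uniform constant arises from bounded geometry of cones: since $X$ is fine, every cone $\Co_{80\delta}(e)$ is finite (Proposition \ref{prop;cones_are_nicer}), and its cardinality is invariant under the group action; cofiniteness of the action on edges then yields a uniform bound $N_0 < \infty$ on $|\Co_{80\delta}(e)|$. By Proposition \ref{prop;recap}(1) every regular slice $\mathcal{S}_{a,x}$ lies in such a cone, so $|\mathcal{S}_{a,x}| \leq N_0$ and the uniform probability $T_a(\delta_x)$ gives mass at least $1/N_0$ to each atom of its support.

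The confluence estimate is carried out in two stages. First, for Dirac masses with $x, x' \in S(a, 5(k+1)\delta)$ and $d(x,x') \leq 8\delta$, Lemma \ref{focusLemma}(3) produces a point $y \in \mathcal{S}_{a,x} \cap \mathcal{S}_{a,x'}$, hence $\|\mathscr{M}(T_a(\delta_x), T_a(\delta_{x'}))\| \geq 1/N_0$. For general $(\eta,\eta')$, decompose $\eta = \mu + \rho$ and $\eta' = \mu + \rho'$ with $\mu = \mathscr{M}(\eta,\eta')$; then $\rho, \rho'$ have disjoint supports and $\|\rho\| = \|\rho'\| = \tfrac12\|\eta\Delta\eta'\|$. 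The pointwise fact $\min(a+b, c+d) \geq \min(a,c) + \min(b,d)$ gives $\mathscr{M}(T_a(\eta), T_a(\eta')) \geq T_a(\mu) + \mathscr{M}(T_a(\rho), T_a(\rho'))$. Using the product coupling $\pi_{x,x'} = \rho(\{x\})\rho'(\{x'\})/\|\rho\|$, one writes $\rho = \sum \pi_{x,x'}\delta_x$ and $\rho' = \sum \pi_{x,x'}\delta_{x'}$; applying the same pointwise $\min$-inequality termwise yields $\|\mathscr{M}(T_a(\rho), T_a(\rho'))\| \geq \sum \pi_{x,x'}/N_0 = \|\rho\|/N_0$. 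Adding gives $\|\mathscr{M}(T_a(\eta), T_a(\eta'))\| \geq \|\mathscr{M}(\eta,\eta')\| + \tfrac{1}{2N_0}\|\eta\Delta\eta'\|$, so $T_a$ is $\tfrac{1}{2N_0}$-confluent on $(\eta,\eta')$; set $C = 2N_0$.

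Iteration requires checking the hypothesis is preserved by one step, which is again Lemma \ref{focusLemma}(3): the union of the supports of $T_a(\eta)$ and $T_a(\eta')$ has diameter at most $8\delta$ and sits on the sphere $S(a, 5k\delta)$. Since the radii $5(k+1)\delta, \ldots, 10\delta$ are multiples of $5\delta$ strictly greater than $5\delta$, the first $k$ steps are all regular, and applying the single-step estimate $k$ times together with Remark \ref{betterconf} produces $\|T^k_a(\eta) \Delta T^k_a(\eta')\| \leq (1 - 2/C)^k \|\eta\Delta\eta'\|$. For $m \geq k$, further steps cannot enlarge the symmetric difference because $T_a$ extends to a contraction on signed $\ell^1$-measures (it preserves the $\ell^1$-norm on positive measures). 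The main obstacle is the extraction of the uniform bound $N_0$, which requires both the fineness of $X$ and the cofinite action in essential ways; the rest is bookkeeping with couplings and a direct verification that the $8\delta$-diameter hypothesis propagates under the flow via Lemma \ref{focusLemma}(3).
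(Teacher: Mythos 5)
Your proof is correct and follows essentially the same strategy as the paper's: the uniform bound on cone cardinalities from fineness plus cofiniteness, confluence on pairs of Dirac masses from the shared slice point guaranteed by Lemma~\ref{focusLemma}(3), and induction propagating the $8\delta$-diameter hypothesis on each sphere $S(a, 5i\delta)$. Where you are more careful than the printed proof is the passage from Dirac masses to general measures: the paper asserts this follows ``by linearity'', but $\mathscr{M}$ and $\Delta$ are not linear operations, and your product-coupling decomposition $\rho = \sum_{x,x'} \pi_{x,x'}\delta_x$, $\rho' = \sum_{x,x'} \pi_{x,x'}\delta_{x'}$ with $\pi_{x,x'} = \rho(\{x\})\rho'(\{x'\})/\|\rho\|$, combined with superadditivity of $\mathscr{M}$, is the right way to make this rigorous; the resulting constant $C=2N_0$ correctly accounts for the factor $\|\delta_x\Delta\delta_{x'}\|=2$ in Definition~\ref{defconf}, where the paper's stated $\tfrac1C$-confluence from a single shared atom appears to be off by a factor of two (harmless, since $C$ only needs to exist). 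Your closing remark that $T_a$ extends to a $1$-Lipschitz map on signed $\ell^1$-measures is also what makes the $m>k$ range of the inequality hold, a point the paper's proof leaves unaddressed.
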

\begin{proof}
First notice that the assumption that $X$ is a fine $\delta$-hyperbolic graph with a
co-finite group action implies that here are only finitely many orbits of cones
of given parameter, and in each of these orbits, cones are finite (Proposition \ref{prop;cones_are_nicer}), with same cardinality. Hence,  there exists $C>1$ that is an upper bound on the cardinality of each cone of parameter
$160\delta$ in $X$.

We first proceed by induction  to prove that for each
$i\leq k$,   the union of supports of the measures $T^i_a(\eta)$ and
$T^i_a(\eta')$ have diameter at most $8\delta$. There is nothing to
prove for $i=0$. For $i>0$,  assuming  $T^{i-1}_a(\eta)$ and
$T^{i-1}_a(\eta')$ have support  whose union has diameter at most
$8\delta$, we can say that, since all  steps of the flow so far were
regular, these supports are on the sphere $S(a, 5
(k+2-i)\delta)$. We apply Lemma \ref{focusLemma} part (\ref{lem;elementary_confluence}) to obtain
that the union of supports of $T^{i}_a(\eta)$ and
$T^{i}_a(\eta')$ have diameter at most $8\delta$. 

We then prove the confluence. For each Dirac masses $\lambda_v\delta_v$
and $\lambda_{v'}\delta_{v'}$, respectively in the support of
$T^{i-1}_a(\eta) - \mathscr{M}  (T^{i-1}_a (\eta),  T^{i-1}_a(\eta'))  $ and of  $T^{i-1}_a(\eta')-  \mathscr{M}  (T^{i-1}_a (\eta),  T^{i-1}_a(\eta'))  $,
the regular step of the flow sends each of these Dirac masses on a uniform measure on a subset of a  slice, in a cone of parameter $160\delta$, by Proposition \ref{prop;recap} part (\ref{lem;slices_are_finite}). Moreover the respective supports of these uniform measures share at least one point, by  Lemma  \ref{focusLemma} part (\ref{lem;elementary_confluence}). Therefore, this step is 
$\frac{1}{C}$-confluent on the Dirac masses $\lambda_v\delta_v$ and $\lambda_{v'}\delta_{v'}$ (see Definition \ref{defconf} above), where $C$ is the upper bound on the cardinality of cones of parameter $160\delta$ in $X$. Precisely, according to Remark\ref{betterconf} we have that
$$\| T_a(\delta_v)   \Delta  T_a(\delta_{v'})   \|   \leq (1- \frac{2}{C})  \|\delta_v   \Delta\delta_{v'} \|.$$
Since the flow step on measures is defined by linearity on Dirac masses, this confluence on
all masses in these supports gives that 
$$ \|T^{i}_a(\eta) \Delta T^{i}_a(\eta')\| = \|T(T^{i-1}_a(\eta)) \Delta T(T^{i-1}_a(\eta'))\| \leq (1-\frac{2}{C})  \|   T^{i-1}_a(\eta)
\Delta T^{i-1}_a(\eta')   \| $$
\end{proof}
\begin{prop}\label{prop;kappa} Let $X$ be a hyperbolic fine graph, with a cofinite group
action. There exists a constant $\kappa  <1$ such that, for all $a$,  for all
$x_1, x_2$ at distance $1$ from each other, if $x$ is among $x_1,x_2$, closest to $a$, and if $\Theta$ denotes
the minimal sum of  angles at infinite valence vertices on a geodesic
$[a,x]$, then for $d(a,x)+\Theta $ sufficiently large,  one has
$$ \| \mu_{x_1}(a) \Delta  \mu_{x_2}(a) \| \, \leq \,
\kappa^{d(a,x)+\Theta} $$

\end{prop}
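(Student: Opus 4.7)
The plan is to split the argument into two regimes according to whether a geodesic $[a,x]$ contains a \emph{checkpoint}, that is, a vertex $c$ with $\A_c(a,x) > (2000\delta)^2$. If such a $c$ exists, let $x'$ denote the element of $\{x_1,x_2\}$ different from $x$; by hypothesis $d(x,x')=1$, so Proposition~\ref{prop;checkpoint} forces $\mu_x(a) = \mu_{x'}(a)$, and hence $\|\mu_{x_1}(a)\Delta\mu_{x_2}(a)\| = 0$. In that case the inequality is trivial for any $\kappa < 1$.

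Otherwise every angle at an infinite-valence vertex of $[a,x]$ is at most $(2000\delta)^2$; since at most $d(a,x)+1$ such vertices lie on $[a,x]$, this gives $\Theta \leq (2000\delta)^2(d(a,x)+1)$, and hence
\[ d(a,x) + \Theta \;\leq\; M \cdot d(a,x) + M,\qquad M:=1+(2000\delta)^2. \]
It therefore suffices to produce a constant $\kappa_0 < 1$ with $\|\mu_{x_1}(a) \Delta \mu_{x_2}(a)\| \leq \kappa_0^{d(a,x)}$ for $d(a,x)$ large enough; setting $\kappa := \kappa_0^{1/M}$ then gives the proposition uniformly in both regimes.

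To produce such $\kappa_0$, I would first apply Lemma~\ref{focusLemma}(\ref{prop;firststep}) to $(\delta_{x_1},a)$ and $(\delta_{x_2},a)$: after at most one flow step each, we obtain measures supported on a common sphere $S(a,5k\delta)$ with $k\approx d(a,x)/(5\delta)$, whose union-of-supports has diameter at most $8\delta+2$. Iterating Proposition~\ref{prop;decay_in_C} on the remaining $\sim k$ consecutive regular steps yields a contraction $(1-2/C)^{k-O(1)}$ of the symmetric difference. Since each flow step is a linear Markov-type operator on probability measures, the final ending step and any subsequent stationary steps can only decrease the $\ell^1$-distance, so the same bound passes to $\mu_{x_1}(a)$ and $\mu_{x_2}(a)$. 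Taking $\kappa_0:=(1-2/C)^{1/(5\delta)}$ produces the required exponential decay, with the $O(1)$ initialisation error absorbed by the ``sufficiently large'' hypothesis on $d(a,x)+\Theta$.

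The main technical friction is the mismatch between the diameter $8\delta+2$ produced by Lemma~\ref{focusLemma}(\ref{prop;firststep}) and the diameter $8\delta$ tolerated by Proposition~\ref{prop;decay_in_C}. I would handle this by performing one additional regular step before iterating the confluence estimate, using Lemma~\ref{focusLemma}(\ref{lem;elementary_confluence}) to guarantee that the supports focus into diameter $8\delta$; alternatively one may revisit the constants in the proof of Proposition~\ref{prop;decay_in_C} to tolerate the extra $+2$ directly. Once this focusing is secured, the rest is the bookkeeping between the exponents $d(a,x)$ and $d(a,x)+\Theta$ recorded above.
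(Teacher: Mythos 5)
Your argument follows the paper's proof essentially step for step: the two-regime split via the checkpoint condition $\A_c(a,x)>(2000\delta)^2$ (handled by Proposition \ref{prop;checkpoint}), the bound $d(a,x)+\Theta\lesssim (1+(2000\delta)^2)\,d(a,x)$ in the bounded-angle regime, and the confluence estimate of Proposition \ref{prop;decay_in_C} initiated by Lemma \ref{focusLemma}(\ref{prop;firststep}). One thing you get right that the paper passes over silently: Lemma \ref{focusLemma}(\ref{prop;firststep}) only yields diameter at most $8\delta+2$ whereas Proposition \ref{prop;decay_in_C} is stated for diameter less than $8\delta$, and your proposed fix of running one additional focusing step via Lemma \ref{focusLemma}(\ref{lem;elementary_confluence}) before iterating the confluence is exactly the right patch.
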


Observe that the statement would hold also for $\Theta$ defined as the
sum of all consecutive angles, and also if it was taken to be the
maximum of these sums.

\begin{proof}
If one of the angles on $[a,x]$  is larger than $(2000\delta)^2$ then by
Proposition \ref{prop;checkpoint}, $\mu_{x_1}(a) \Delta  \mu_{x_2}(a)
=0$. If all the angles on $[a,x]$ are smaller that $(2000\delta)^2$,
then $$  d(a,x)   \leq  d(a,x)+\Theta     \leq   (2000\delta)^2 d(a,x).$$

It follows that $(\kappa)^{d(a,x)+\Theta}  \geq
(\kappa^{(2000\delta)^2 6\delta } )^{d(a,x)/6\delta}$. We then
choose $\kappa<1$ sufficiently  close to $1$ so that $\kappa^{(2000\delta)^2 6\delta } >
(1-\frac{2}{C})$ from Proposition \ref{prop;decay_in_C}. If $d(a,x)$ is
sufficiently large (uniformly), $d(a,x)/6\delta$ is larger than the
number of regular step of the flow applied to $(\delta_x, a)$. 
 Thus, Proposition \ref{prop;decay_in_C}, and Lemma \ref{focusLemma} part (\ref{prop;firststep}) give the conclusion.
\end{proof}
\subsection{Non-confluence} In this subsection we will be showing that for two given sources of the flow, any point on a geodesic between those two sources have disjoint masks for each of those sources.
\begin{prop}\label{non-conf}Let $X$ be a $\delta$-hyperbolic and fine graph, and let $\mu$ be the mask of Definition \ref{flot}.
\begin{enumerate} 
\item If $d(x,x')\geq 10\delta$ and if $a$ is of finite valence,  at
distance at least $5\delta$ from both $x$ and $x'$,  and $a\in [x,x']$ for some geodesic, then
$\| \mu_x(a) \Delta \mu_{x'}(a) \| =2$.
\item If  $a$ has infinite valence, and $\A_a(x,x') > 10(160\delta)^2 $
then $\| \mu_x(a) \Delta \mu_{x'}(a) \| =2$.
\end{enumerate}
\end{prop}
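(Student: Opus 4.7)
The identity $\|\mu_x(a) \Delta \mu_{x'}(a)\| = 2$ for probability measures is equivalent to the supports being disjoint, so the plan is to prove disjointness in each case. By Proposition~\ref{prop;theta0}, the support of $\mu_x(a)$ is contained in a cone around an initial edge $\alpha$ of a geodesic $[a,x]$: in the infinite valence case it sits in $S(a,1)\cap\Co_{160\delta}(\alpha)$, and in the finite valence case inside the ball of radius $5\delta$ around $a$ intersected with $\Co_{1160\delta}(\alpha)$. Analogously for $\mu_{x'}(a)$ with some initial edge $\alpha'$ of a geodesic $[a,x']$.

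For Part (2), suppose $v$ is a common support point; then $v$ is a neighbor of $a$ lying in $\Co_{160\delta}(\alpha)\cap\Co_{160\delta}(\alpha')$. The cone-paths $p,p'$ from $a$ to $v$ (of length and angle $\leq 160\delta$, starting with $\alpha$ and $\alpha'$ respectively) can be edited by excising every interior revisit of $a$: each such revisit has in/out angle $\leq 160\delta$, so by the very definition of angle the two edges through $a$ can be bypassed by a path of length $\leq 160\delta$ in $X\setminus\{a\}$. Since the number of revisits is controlled by the length of $p$, the resulting walk from $t(\alpha)$ to $v$ in $X\setminus\{a\}$ has length $O(\delta^2)$. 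Doing the same for $p'$ and concatenating produces a walk in $X\setminus\{a\}$ from $t(\alpha)$ to $t(\alpha')$ of length $O(\delta^2)$, which bounds $\A_a(\bar\alpha,\alpha')$ by $O(\delta^2)$. This contradicts Proposition~\ref{prop;goulet}(3) applied to the hypothesis $\A_a(x,x')>10(160\delta)^2$, which forces $\A_a(\bar\alpha,\alpha') > 10(160\delta)^2-12\delta$ for every such choice; the threshold $10(160\delta)^2$ is tuned precisely so as to dominate the quadratic bookkeeping.

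For Part (1), the first step is to observe that any support point $v$ of $\mu_x(a)$ lies in $U_{2\delta}[a,x]$: this is immediate when $v$ is a stationary endpoint of the flow, and when $v$ arose as the ending-step jump target for some $y\in\mathcal{S}_{a,x}(5\delta)\subseteq U_{2\delta}[a,x]$, the fact that $v$ lies on a geodesic $[a,y]$ transfers the $U_{2\delta}$-inequality from $y$ to $v$. Similarly $v\in U_{2\delta}[a,x']$. Summing the two $U_{2\delta}$-inequalities and using the hypothesis $a\in[x,x']$ (so $d(x,x')=d(x,a)+d(a,x')$) together with the triangle inequality $d(v,x)+d(v,x')\geq d(x,x')$ forces $d(a,v)\leq 2\delta$. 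A short case analysis then concludes: if $v$ is stationary in either flow then $d(a,v)=5\delta>2\delta$, contradiction; otherwise $v$ arose from an ending step in both flows, so there exist $y\in\mathcal{S}_{a,x}(5\delta)$ and $y'\in\mathcal{S}_{a,x'}(5\delta)$ with $\A_v(a,y),\A_v(a,y')>900\delta$, and Proposition~\ref{prop;goulet}(1) forces $v$ to lie on every geodesic from $a$ to $y$ and from $a$ to $y'$. Since the main sub-geodesics of $[x,x']$ from $a$ go in opposite directions and meet only at $a$, no $v\neq a$ can lie on both, yielding the contradiction.

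The most delicate point is the both-ended subcase of Part (1): the $y,y'$ from the ending steps need not themselves lie on the main geodesic, so the closing step must combine the bottleneck condition at $v$, the estimate $d(a,v)\leq 2\delta$, and $\delta$-hyperbolicity to reduce to the main-geodesic picture, using Proposition~\ref{prop;recap}(3) to locate main-geodesic vertices in the relevant slices. This is the step where I expect the argument to require the most care.
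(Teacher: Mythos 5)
Your reduction to disjointness of supports is correct, and Part (2) is in substance the paper's argument: you re-derive the content of Lemma~\ref{lem;thetasquare} by excising revisits of $a$ from the cone-paths, rather than citing it, and then invoke Proposition~\ref{prop;goulet}(3); that is fine, though the ``$O(\delta^2)$'' bookkeeping should be made explicit since $10(160\delta)^2$ is calibrated precisely against it. In Part (1) you also overstate a parameter: after more than one regular flow step the support of $T^k_a(\delta_x)$ is only known to lie in $U_{4\delta}[a,x]$ (Stability, Proposition~\ref{prop;recap}(2), which the paper cites), not $U_{2\delta}[a,x]$, so the inequality you should get from summing is $d(a,v)\le 4\delta$ rather than $\le 2\delta$. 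That still contradicts $d(a,v)=5\delta$ in the stationary-endpoint case, so that branch of your argument survives once the parameter is corrected.

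The genuine gap is the both-ending subcase of Part (1), which you flag but do not close. From $\A_v(a,y)>900\delta$ you conclude that $v$ lies on every geodesic from $a$ to $y$, where $y$ is a slice point at distance $5\delta$; but $y$ need not lie on the chosen geodesic $[x,x']$, so ``the sub-geodesics of $[x,x']$ from $a$ meet only at $a$'' does not apply as written, and the ``reduce to the main-geodesic picture'' step you gesture at is exactly what is missing. The paper shortcuts this by asserting the stronger estimate $\A_v(a,x)>20\delta$ at the ending-step target $v$ (an angle with the \emph{original} endpoint $x$, not merely with the slice point $y$), from which Proposition~\ref{prop;goulet}(1) forces $v$ onto every geodesic from $a$ to $x$, in particular onto $[a,x]\subset[x,x']$, and symmetrically forces the other ending-step target onto $[a,x']\subset[x,x']$; since these two subsegments of $[x,x']$ meet only at $a$, the two Dirac masses are carried by distinct vertices and the supports are disjoint. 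Supplying that angular estimate --- propagating the $900\delta$ angle from the intermediate slice point back to the original endpoint $x$ --- is the step your proof needs and does not contain.
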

\begin{proof}
(1) We proceed by contradiction. 
Assume that $v$ is in the support of both $\mu_x(a)$  and $\mu_{x'}(a)
$.

In the first case, assume that no step in the flow was {\it
  ending}. Then $v$ is at distance $5\delta$ from $a$ and is in
$4\delta$-geodesics between both $a,x$ and $a,x'$ (by Proposition \ref{prop;recap}(\ref{lem;stab})). This is not
possible. 

Assume that one step of the flow for $\delta_x$ was ending. If the there was
no step of the flow of type ending for $\delta_{x'}$, then the
supports of  $\mu_x(a)$  and $\mu_{x'}(a)$ are on different spheres
around $a$, so they are disjoint.  If there was an ending step for
$\delta_{x'}$, then, since $a$ is of finite valence,  $\mu_x(a)$  and
$\mu_{x'}(a)$  are dirac masses carried by vertices $y,y'$ on which,
respectively, $\A_y(a,x)$ and $\A_y(a,x')$ are larger than
$20\delta$. Since $a$ is on a geodesic from $x$ to $x'$, by
Proposition \ref{prop;goulet} one has
$y\neq y'$.

(2) The assumption indicates that the flow from $\delta_x$ to $a$ and from
$\delta_{x'}$ to $a$ has an ending step. It follows from Proposition
\ref{prop;theta0}  that the measures
$\mu_x(a)$  and $ \mu_{x'}(a)$ have supports  contained in
two cones of parameter $160\delta$ centered respectively on an edge starting
a
geodesic $[a,x]$ and starting  a geodesic $[a,x']$. Let $e,e'$ these
edges. One has $\A_a(e,e') > 9 (160\delta)^2$. It follows from Lemma \ref{lem;thetasquare}
that the intersection of the cones is reduced to $\{a\}$. Thus the
intersection of the supports is empty.
\end{proof}
\begin{cor}\label{cor;ALconique}
 Let $X$ be the coned-off graph of a finitely generated relatively
   hyperbolic  group $G$
   over its family of peripheral subgroups. There exists $p>1$, and $\varepsilon >0$  such that for all $x$    
 $$\sum_{a\in X} \| \mu_1(a) \Delta \mu_{x}(a) \|^p $$ 
 is convergent,
 and if $x$ is  at distance at least $10\delta$ from $1$, the sum   is larger than $\varepsilon d(1,x)$.  
\end{cor}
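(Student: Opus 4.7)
The plan is to establish convergence and the lower bound separately. For convergence, I would telescope $\mu_1-\mu_x$ along a word geodesic in $\Cay(G)$ and, using the equivariance of Proposition~\ref{prop;mask_equivariance} together with Minkowski, reduce the problem to a single uniform estimate for each generator of $G$. For the lower bound, I would walk along a coned-off geodesic from $1$ to $x$ and harvest non-confluence (Proposition~\ref{non-conf}(1)) at the interior finite-valence vertices.

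Fix $x$ and pick a word geodesic $1=x_0,x_1,\ldots,x_N=x$ in $\Cay(G)$, so $N=d_w(1,x)$ and each $s_i=x_i^{-1}x_{i+1}$ is a generator. Since $\|\eta\Delta\eta'\|=\|\eta-\eta'\|$ for probability measures, telescoping and the triangle inequality give $\|\mu_1(a)\Delta\mu_x(a)\|\leq\sum_i\|\mu_{x_i}(a)\Delta\mu_{x_{i+1}}(a)\|$. By Proposition~\ref{prop;mask_equivariance}, the $i$-th summand equals $\|\mu_1(x_i^{-1}a)\Delta\mu_{s_i}(x_i^{-1}a)\|$, so Minkowski's inequality in $\ell^p(X)$ yields
\begin{equation*}
\Bigl(\sum_{a\in X}\|\mu_1(a)\Delta\mu_x(a)\|^p\Bigr)^{1/p}\leq\sum_{i=0}^{N-1}M_{s_i}^{1/p},\qquad M_s:=\sum_{b\in X}\|\mu_1(b)\Delta\mu_s(b)\|^p.
\end{equation*}
It thus suffices to show $M_s<\infty$ for each generator $s$ and some $p>1$. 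For a group element $b\in G$ outside a bounded ball, Proposition~\ref{prop;kappa} combined with Proposition~\ref{prop;pseudo_word_metric} gives $\|\mu_1(b)\Delta\mu_s(b)\|\leq C\kappa^{d_w(1,b)/A}$, and summing over $b\in G$ converges once $p$ exceeds a threshold depending on $\kappa$, $A$, and the exponential growth rate of $(G,d_w)$. For a cone point $b=\widehat{gH_i}$, any geodesic $[1,b]$ in $X$ ends with an edge from some $h\in gH_i$ to $b$, so the angle sum satisfies $\hat d(1,b)+\Theta_b\geq\hat d(1,h)+\Theta_h\geq d_w(1,h)/A-B$; since the associated map $b\mapsto h$ is at most $k$-to-one (for each $h$, at most one $\widehat{hH_i}$ per peripheral), the cone-point contribution to $M_s$ is bounded by the same exponential sum and is summable. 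Combining gives $\sum_a\|\mu_1(a)\Delta\mu_x(a)\|^p\leq N^p\max_s M_s<\infty$.

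For the lower bound, suppose $\hat d(1,x)\geq 10\delta$ and pick a geodesic $1=v_0,v_1,\ldots,v_\ell=x$ in $X$, with $\ell=\hat d(1,x)$. Since cone points are only adjacent to group elements of a single coset, no two consecutive $v_i$'s can both be cone points, so at least half of the vertices of the geodesic are finite-valence group elements. For each such $v_i$ with $5\delta\leq i\leq\ell-5\delta$, Proposition~\ref{non-conf}(1), applied with sources $1$ and $x$ and $a=v_i$, yields $\|\mu_1(v_i)\Delta\mu_x(v_i)\|=2$, contributing $2^p$ to the sum. Once $\ell$ is beyond a small constant the number of such indices is a fixed positive fraction of $\ell$, giving the desired $\sum\geq\varepsilon\,\hat d(1,x)$; the threshold and the constant $\varepsilon$ can be tuned to absorb the short geodesics near $10\delta$.

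The main obstacle is the cone-point contribution to $M_s$, since the coned-off graph $X$ is not locally finite and $\hat d$-balls need not be finite. The sliding trick---pushing each cone point back along a geodesic to its neighboring group element $h$, with the resulting map being $k$-to-one---is what transfers the problem to the exponentially growing but locally finite Cayley graph, where Proposition~\ref{prop;pseudo_word_metric} controls the decay.
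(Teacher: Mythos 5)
Your proof is correct and follows essentially the same route as the paper: exponential decay of $\|\mu_1(a)\Delta\mu_x(a)\|$ coming from Proposition~\ref{prop;kappa} measured against the exponential growth rate of $(G,d_w)$ via Proposition~\ref{prop;pseudo_word_metric} for the convergence, and Proposition~\ref{non-conf} along a coned-off geodesic for the lower bound. The one place you are more explicit than the paper is the telescoping: since Proposition~\ref{prop;kappa} applies only to neighbouring basepoints, some passage from $s\mapsto\mu_s$ at a generator to general $x$ is needed, and your Minkowski-plus-equivariance reduction to the finitely many quantities $M_s$ makes this step clean and uniform. Likewise, your $k$-to-one ``push the cone point back to an adjacent group element'' device is a slightly different (and equally valid) way of handling the sum over infinite-valence vertices than the paper's direct estimate on the cardinality of $S_{d'}(R)$. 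One small caveat, present in the paper as well: at the boundary $\hat d(1,x)=10\delta$ the geodesic $[1,x]$ has exactly one interior vertex at distance $\geq 5\delta$ from both endpoints, which could be a cone point, so the ``tune $\varepsilon$ to absorb short geodesics'' remark deserves a sentence (e.g.\ a slight relaxation of the constants in Proposition~\ref{non-conf}(1), or an appeal to part (2) when the midpoint is a cone point with large angle) to be fully airtight.
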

\begin{proof}
We identify $G$ with the image of its Cayley graph in $X$.
For the first statement, let $\Theta(x,x')$ denote
the minimal sum of  angles at infinite valence vertices on a geodesic
$[x,x']$.  Define $d': X\times X \to \mathbb{N}$ as
$d'(g_1,g_2) = d_X(g_1,g_2) + \Theta(x,x')$. 
By Proposition \ref{prop;pseudo_word_metric}, $d'$ is coarsely larger
than a word metric $d_w$ on $G$.   In particular,
there is $A\geq 1$ and $ \gamma_G >1 $ such that $S_{d'}(R,G) = \{ g\in G, d'(1,g)
 =  R\}$ has cardinality less than
$A\gamma_G^R$.  Denote by  $S_{d'}(R)$ the union of
$S_{d'}(R,G) $ and of the vertices $x$ of $X$ of infinite valence such
that $d'(1,x)= R$.  One can check that its cardinality is at
most  $A' \gamma_G^R$ for a certain constant $A'$ (related to
the maximal order of intersection of two peripheral subgroups).

Recall that we defined a constant $\kappa$ in Proposition \ref{prop;kappa}.  We choose $p$ such that $\kappa^p< 1/\gamma_G$. Then,
fixing $x$,  we
have the bound
$$ \sum_{a\in S_{d'}(R)} \|   \mu_1(a) \Delta \mu_{x}(a)     \|  \leq
\kappa^{p R}  A' \gamma_G^{ R}   $$
This defines a summable family of numbers therefore $\sum_{a\in X} \| \mu_1(a) \Delta \mu_{x}(a) \|^p $ is convergent.

The lower bound is given by Proposition \ref{non-conf} and the fact that there are at least
$d(x,x')/2 - 1$ vertices of finite valence between $x$ and $x'$.
\end{proof}



\section{An action that is proper in the coned-off distance}\label{proper_in_CO}
In this section $X$ is the coned-off Cayley graph of a relatively
hyperbolic group $G$, and $\mathscr{V}=\mathbb{R}X^{(0)}$ is the vector space of all functions
from $X^{(0)}$ to $\mathbb{R}$ with finite support, that we endow with the $\ell^1$-norm. For all $p >1$ or $p=\infty$, we consider $\mathscr{W}_p= \ell^p(X^{(0)},
\mathscr{V})$.  In other words, an element $\omega$ of $\mathscr{W}_p$ is the data, for all $a\in
X^{(0)}$, of a vector $\omega_a\in \mathscr{V}$,
such that its norm  
$$  \|  \omega \|_{\mathscr{W}_p} = \left( \sum_a \|  \omega_a \|^p_{\mathscr{V}} 
\right)^{\frac1p}  = \left( \sum_a \left( \sum_v |\omega_a(v)|
  \right)^p   \right)^{\frac1p}$$   
  is finite. Let $ {\mathscr{W}} $ be one of the  ${\mathscr{W}}_p$. 
We denote by $\vv{{\Isom}} {\mathscr{W}}$ the unitary group of
${\mathscr{W}}$, {\it i.e}  the subgroup of
linear automorphisms of ${\mathscr{W}} $ that are isometries for the norm. The group $G$ admits an isometric linear action on ${\mathscr{W}} $  by
precomposition by isometries of $X$.  Let us denote
$\pi: G\to \vv{{\Isom}} {\mathscr{W}} $ this
action. We want to promote it into an action by affine isometries that has no fixed point. For that we need a cocycle, as we recall
in the next subsection.
\subsection{Generalities on cocycles and actions} 
Recall that  we may see $ {\mathscr{W}} $ as an affine space, and also
as an abelian group (for the addition of vectors).  The
 group of affine isometries of (the affine space) ${\mathscr{W}} $  is the semidirect
product $$ {\Aff \Isom}  {\mathscr{W}}  =  {\mathscr{W}} \rtimes  \vv{{\Isom}} {\mathscr{W}} $$ for
the natural action of $\vv{{\Isom}} {\mathscr{W}} $ on
${\mathscr{W}} $. Given a group $G$, any homomorphism  $\phi:G\to {\Aff \Isom}{\mathscr{W}}$ has an image in
$\vec{\phi}:G\to \vv{{\Isom}} {\mathscr{W}} $ through the quotient map, and,
given this homomorphism $\vec{\phi}$, the homomorphism
$\phi$ is
characterised by a map $c:G\to {\mathscr{W}} $ recording $c(g) =
\phi(h)(\vec{0}_ {\mathscr{W}})$.  Applying the law of the
semidirect product reveals that $c$ satisfy the cocycle relation 
$$c(g_1g_2) = c(g_1) + \vec{\phi}(g_1) (c(g_2)),$$
for all $g_1,g_2\in G$.
The cocycle is actually the difference between $\phi$ and a given section of
$\vec{\phi}$ in the semi-direct product. Conversely, whenever one has $\vec{\phi}:G\to \vv{{\Isom}}
{\mathscr{W}}$  and $c:G\to  {\mathscr{W}}$ satisfying the cocycle
relation for $\vec{\phi}$, one can define the map 

$$\phi:G\to  {\Aff \Isom}  {\mathscr{W}}, g\mapsto \phi(g) = (c(g),  \vec{\phi}(g)),$$
which is a homomorphism.

\begin{defi}\label{proper}For a finitely generated group $G$, endowed
  with a (locally finite) word metric $d$, the
homomorphism $\phi$ is called {\it proper}  if $c$ is
proper, or equivalently  if $\|c(g_i)\|$
goes to infinity for any sequence $(g_i)$ of elements in $G$ for which
$d(1,g_i)$ goes to infinity in $\mathbb{R}$.

 If $d$ is a left invariant metric obtained from a coned-off cayley
 graph over the cosets of subgroups, (or, in other words,  a relative
 word metric), then $\phi$ is relatively proper if   $\|c(g_i)\|$
goes to infinity for any sequence $(g_i)$ of elements in $G$ for which
$d(1,g_i)$ goes to infinity. 
\end{defi}
\subsection{A relatively proper affine isometric action}
We return to our initial context. We have $G$ a relatively hyperbolic
group, and $\pi: G\to
\vv{{\Isom}} {\mathscr{W}} $ for our relatively hyperbolic group
$G$, which we want to promote to a homomorphism in ${\Aff}{\Isom} {\mathscr{W}} $. We thus need a cocycle for
$\pi$.
We identify $G$ with the vertices of its Cayley graph $\Cay G$ in the
coned-off Cayley graph $X$. We define the map $c$ as follows
$$c: \left\{ \begin{array}{ccc}  G &\to &
                                                           \mathscr{W}_{\infty}
                                \\  g & \mapsto &  \{a \mapsto
                                                 \mu_1(a) -
                                                  \mu_g(a)
                                                 \}. \end{array} \right.$$
                                                  %
where $\mu_x(a)$ is the probability measure from Definition \ref{flot}. Recall that  
$\mu_1(a) -     \mu_g(a)$ has same $\ell^1$-norm as   the symmetric difference 
$\mu_1(a) \Delta\mu_{g}(a)$ because the latter is the absolute value of the former.
The following proves the first part of our main result, Theorem \ref{main}
\begin{thm}\label{thm;actionAL}
 Let $G$ be a relatively hyperbolic group and  let $X$ be the
  coned-off Cayley graph of $G$ with respect to its peripheral
  subgroups. For $p>1$ large enough, $c$ has its values in $\mathscr{W}_p=\ell^p(X^{(0)},\mathscr{V})$, and is a cocycle for the representation
 $\pi$ on $\mathscr{W}_p$. 
 
 Moreover, the homomorphism $(c,\pi): G \to {\Aff}{\Isom}
 {\mathscr{W}}_p$ is relatively proper for the metric on $G$ induced by the
coned-off Cayley graph. Precisely, there is
  $\epsilon >0$ and a constant $k_0$,  for which  for all $g\in G$,  $\|g(\vec{0})\|^p
  \geq \epsilon d_X(1,g) -k_0$.  

\end{thm}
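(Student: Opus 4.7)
My plan is to split the theorem into three claims, all of which reduce quickly to Proposition \ref{prop;mask_equivariance} (equivariance of masks) and the two halves of Corollary \ref{cor;ALconique}. The three claims are: (1) $c(g)\in \mathscr{W}_p$ for the $p$ supplied by that corollary; (2) $c$ satisfies the cocycle relation for $\pi$; and (3) the quantitative relative-properness inequality.

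Claim (1) is immediate. By construction $\|c(g)(a)\|_{\mathscr{V}} = \|\mu_1(a) - \mu_g(a)\|_1 = \|\mu_1(a)\,\Delta\,\mu_g(a)\|$, so that
$$\|c(g)\|_{\mathscr{W}_p}^p \;=\; \sum_{a\in X^{(0)}} \|\mu_1(a)\,\Delta\,\mu_g(a)\|^p,$$
and this sum is finite by the convergence assertion of Corollary \ref{cor;ALconique}.

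For claim (2), the key preliminary point is to pin down the representation $\pi$ unambiguously as $(\pi(g)\omega)(a) = g_*\bigl(\omega(g^{-1}a)\bigr)$, where $g_*$ denotes the natural pushforward action on $\mathscr{V} = \mathbb{R}X^{(0)}$; this is the only convention under which $\pi$ is an isometry of $\mathscr{W}_p$ that interacts with the equivariance of masks in the expected way. Then Proposition \ref{prop;mask_equivariance}, written in the form $(g_1)_*\mu_x(g_1^{-1}a) = \mu_{g_1 x}(a)$, gives
$$(\pi(g_1)c(g_2))(a) \;=\; (g_1)_*\bigl(\mu_1(g_1^{-1}a) - \mu_{g_2}(g_1^{-1}a)\bigr) \;=\; \mu_{g_1}(a) - \mu_{g_1 g_2}(a).$$
Adding $c(g_1)(a) = \mu_1(a) - \mu_{g_1}(a)$, the middle terms telescope to yield $\mu_1(a) - \mu_{g_1 g_2}(a) = c(g_1 g_2)(a)$.

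For claim (3), the lower-bound half of Corollary \ref{cor;ALconique} gives $\|c(g)\|_{\mathscr{W}_p}^p \geq \varepsilon\, d_X(1,g)$ whenever $d_X(1,g) \geq 10\delta$. Setting $k_0 := 10\varepsilon\delta$, the extended inequality $\|c(g)\|_{\mathscr{W}_p}^p \geq \varepsilon\, d_X(1,g) - k_0$ then holds trivially in the remaining range $d_X(1,g) < 10\delta$, since there the right-hand side is non-positive. This is both the quantitative statement of the theorem and, in view of Definition \ref{proper}, the asserted relative properness for the coned-off metric. The analytic content of the theorem is already concentrated in Corollary \ref{cor;ALconique}; the only genuine (and mild) obstacle in the present statement is fixing the correct convention for $\pi$ in claim (2) so that equivariance of masks translates cleanly to the cocycle identity.
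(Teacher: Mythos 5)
Your proposal is correct and follows the same route as the paper, which simply cites Proposition \ref{prop;mask_equivariance} for the ``standard computation'' of the cocycle relation and both halves of Corollary \ref{cor;ALconique} for summability and the lower bound. One genuine contribution of your write-up is the observation in claim (2) that the representation $\pi$ must act on both slots, $(\pi(g)\omega)(a) = g_*\bigl(\omega(g^{-1}a)\bigr)$, rather than by precomposition alone as the text's phrasing might suggest; this is indeed the unique convention under which $\phi_*\mu_x(a)=\mu_{\phi x}(\phi a)$ makes the telescoping work, and you are right to flag it.
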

\begin{proof} That $c$  is a cocycle for $\pi$ is a standard computation using Proposition \ref{prop;mask_equivariance}. It takes its values in $\ell^p(X^{(0)},\mathscr{V})$, by  Corollary \ref{cor;ALconique}.    The estimate on $\|g(\vec{0})\|$ is a consequence of the last part of Corollary \ref{cor;ALconique}.\end{proof}
\section{Random coset representatives and subgroup
  properness} \label{induced} We will now recall some basic notions on
induced representations from a subgroup $H$ to a larger group $G$, and
discuss random coset representatives (see Definition
\ref{randomCoset}). Our main result is that in case those random coset
representatives that are $\ell^p$-almost $G$-invariant exist
(Definition \ref{lpalmost}), we can define a cocycle on $G$ for the
induced representation from $H$, that will have the same
$H$-properness than the cocycle we started with (Proposition \ref{prop;induced_affine_rep}). We start by recalling some basic notions on Banach spaces.
\begin{defi}\label{uc}A Banach space $B$ is called {\it uniformly convex} if for any $0 < \epsilon \leq 2$, there is $\delta(\epsilon) >0$ such that for any $x,y\in B$ with $\|x\|=\|y\|=1$ and $\|x-y\|\geq\epsilon$, then
$$\frac{\|x+y\|}{2}\leq 1-\delta(\epsilon).$$
Suppose that a finite number of Banach spaces, $B_1,B_2, \dots, B_k$ are given, and that $B$ is their product. We shall call $B$ a {\it uniformly convex product of the $B_i$} if the norm of an element $x=(x_1, x_2,\dots, x_k)$ of $B$ is defined by
$$\|x\|=N(\|x_1\|,\|x_2\|,\dots,\|x_k\|),$$
where $N$ is a continuous non-negative function, homogeneous, strictly convex and strictly increasing.
\end{defi}
A classical example of uniformly convex product is an $\ell^p$-combination. According to Clarkson \cite{Cla}, $\ell^p$-spaces are uniformly convex, and any uniformly convex product of uniformly convex Banach spaces is again uniformly convex. According to Day \cite{Day}, if $B$ is a uniformly convex Banach space, then the $B$-valued functions that are $\ell^p$ are uniformly convex as well. 
\subsection{Induced representations}

Assume that $H$ is a subgroup of a group $G$. Assume that we have a 
representation $\pi_H: H \to \vv{ {\Isom} } \mathscr{V} $ of $H$
in the linear isometry group of a normed vector space $\mathscr{V}
$ in other words, a unitary representation on $\mathscr{V}
$). Then, there is a normed vector space $\mathscr{V}' $, with an
isometric embedding of $\mathscr{V}$ in it,  and a
 unitary representation $\pi_G: G \to \vv{ {\Isom} } \mathscr{V}' $, called
the induced representation of $H$ to $G$, such
that the restriction to $H$ preserves  $\mathscr{V}$  and induces
$\pi_H$. We refer the reader to  \cite[\S E.1]{BdlHV} for the material recalled below.
First, let us recall the notion of induced representation.
\begin{defi}Given a representation $(\pi_H,V)$ of a group $H$ that happens to be a subgroup of a bigger group $G$, we define
$${\mathcal A}_\infty=\{\varphi:G\to V\,|\,\varphi(gh)=\pi_H(h^{-1})\varphi(g)\hbox{ for all }g\in G, h\in H\}.$$
We can endow the vector space ${\mathcal A}_\infty$ with the induced action from $\pi_H$
\begin{align*}\pi_G(g):{\mathcal A}_\infty&\to{\mathcal A}_\infty\\
\varphi&\mapsto\{x\mapsto\varphi(g^{-1}x)\}\end{align*}
If $p:G\to G/H$ denotes the quotient map, we further define
$${\mathcal A}_0=\{\varphi\in{\mathcal A}_\infty\hbox{ and }  |p(\hbox{supp}\varphi)|<\infty\}$$
and check that the induced representation $\pi_G$ stabilizes
${\mathcal A}_0$. Given any norm on $V$, any element
$\varphi\in{\mathcal A}_\infty$ induces a well-defined 
map $$N_\varphi: \left\{\begin{array}{ccc} G/H &  \to & \mathbb{R} \\  gH &
                                                                    \mapsto &  \|\varphi(g)\|. \end{array}    \right.$$ 
        This     map is  indeed   well-defined by       definition of
        $\mathcal{A}_\infty$, and because   $\pi_H$ is a representation
        in the   isometry group. If the norm on $V$ is an $\ell^p$
        norm, then this map induces a norm on 
        $\mathcal{A}_0$, by the formula 
        $$N(\varphi)= \left( \sum_{G/H} N_\varphi (gH)^p \right)^{1/p}$$  
        that is an
        $\ell^p$-norm. One then 
        defines ${\mathcal A}_p$ to be the $\ell_p$ completion of
        ${\mathcal A}_0$ with respect to $N$.
\end{defi}
Notice that for all $\varphi \in \mathcal{A}_\infty$, for all $g \in G$, the value of $\varphi$ at $g$ determines the
restriction of $\varphi$ on $gH$. Thus  any system of representatives for $G/H$ gives an
identification of ${\mathcal A}_0$ with finitely $V$-valued supported
functions on $G/H$, and ${\mathcal A}_p$ is the inverse image of
$\ell^p(G/H,V)$ under this identification. If the space $V$ we started with is uniformly convex, then so will ${\mathcal A}_p$ be according to the above cited result of Day \cite{Day}.
When $G$ is a relatively hyperbolic group, and $H$ is a peripheral subgroup that
acts properly on an $\ell_p$-space, we
will use this induced representation to produce a action of $G$ on an
$\ell_p$-space that is proper in a sense of angles.  To do that, we
will need to produce a cocycle of $G$ for this induced
representation.
\subsection{The example of a free product}\label{sec;freeprod}
In this subsection we
are  given a group $H$,  a unitary representation $(\pi_H,V)$ of $H$ on a space $V$,
and a cocycle $c:H\to V$.   We consider  $G= K*H$ for an arbitrary
group $K$. We want to produce a cocycle $C$ for $G$ in
the space $\mathcal{A}_0$ for the induced representation (hence an
affine isometric action of $G$ on $\mathcal{A}_0$). This is a well-known construction but we present it here in the way needed for its generalization to relatively hyperbolic groups.

We consider $\widetilde{G/H} \subseteq G$, the coset representatives of
$H$ in $G$ given by normal forms in the free product.
More precisely,  any element $g\in G$ is uniquely of the form
$g=k_1h_1\dots k_nh_n$ with $k_i\in K$, $h_i\in H$ and
$k_2,\dots,k_n,h_1,\dots h_{n-1}\not=e$.  For a coset $gH$,  we  write
$g=k_1h_1\dots k_nh_n$, and we choose the representative $\tilde{g} =
k_1h_1\dots k_n$. Note that if $g'\in gH$, then $\tilde{g'} =
\tilde{g}$, as it should be. One can picture that
$\tilde{g}$ corresponds to the projection of $1$ on the coset $gH$ for
any word metric adapted to the free product.  We define, for
each $\gamma\in G$ the vector $C_\gamma \in \mathcal{A}_0$ as follows
$$ C_\gamma : 
\left\{   \begin{array}{ccc} G & \to &  V   \\ 
g &            \mapsto  &           c(g^{-1}\widetilde{g})  -
                          c(g^{-1}\gamma (\widetilde{\gamma^{-1} g})).
                                                 \end{array}   \right.  $$
Let us first check that for all $\gamma$, $C_\gamma$ is in  $\mathcal{A}_\infty$.
Let us compute  $C_\gamma(gh)$ for arbitrary $g\in G, h\in H$. In the following computation, the first line holds because $ (\widetilde{ gh})$ and  $(\widetilde{ g})$ are equal since they are both the representative of the coset $gH$, and the second line holds by the cocycle relation for $c$. 
\begin{eqnarray*} C_\gamma(gh) & = & c(h^{-1} g^{-1}  (\widetilde{ gh}) )-   c(h^{-1} g^{-1} \gamma (\widetilde{ \gamma^{-1} gh}))  \\
 & = &  c(h^{-1} g^{-1}  (\widetilde{ g})) -   c(h^{-1} g^{-1} \gamma (\widetilde{ \gamma^{-1} g}) ) \\
 & = & c(h^{-1}) + \pi_H(h^{-1}) c(  g^{-1}  (\widetilde{ g})) - c(h^{-1}) - \pi_H(h^{-1}) c( g^{-1} \gamma (\widetilde{ \gamma^{-1} g}) ) \\
 & = & \pi_H(h^{-1}) C_\gamma(g).
\end{eqnarray*}
Therefore,  for all $\gamma$, $C_\gamma$ is in  $\mathcal{A}_\infty$.

Geometrically, for any fixed $g$,   this records the difference  (seen through the cocycle $c$)  of the     
   projection of $1$     and the projection     of $\gamma$, on
   the coset $gH$ (see Figure \ref{fig;1} for an illustration).
\begin{figure}\label{fig;1}
\includegraphics[width=0.7\textwidth]{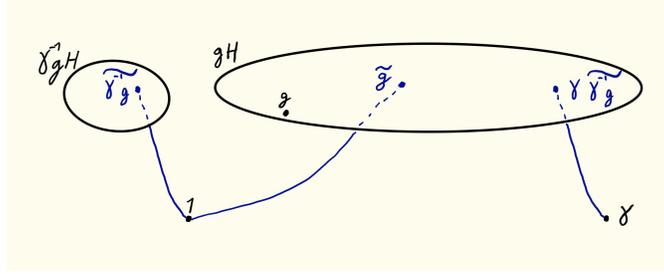}
\caption{Cosets seen from $1$ and from $\gamma$}
\end{figure}
      Since we are in a free product, only finitely many such
     differences are non-zero, and hence the map $\{\gamma \mapsto C_\gamma\}$
    takes its values in $\mathcal{A}_0$. 
    
    Now we check that $C$ is a cocycle of $G$ for the induced
    representation. Let us introduce $d: G\to V$ by $d(g)=c(g^{-1}
    \tilde{g})$. Then, for $\gamma \in G$, the map $\pi_G(\gamma)d :
    G\to V$  is, by definition of the representation, so that
    $(\pi_G(\gamma)d)(g) = d(\gamma^{-1} g)$  for all $g\in G$. This
    means that $(\pi_G(\gamma)d)(g) =  c(g^{-1} \gamma
    \tilde{\gamma^{-1} g})$, and we see that $C_\gamma = d-
    \pi_G(\gamma)d$ for all $\gamma$. The representation $\pi_G$ is a
    homomorphism, therefore, this immediately implies the cocycle
    relation $ C_{\gamma_1\gamma_2} = C_{\gamma_1} +\pi_G(\gamma_1)
    C_{\gamma_2}$.   Actually, one can see $C$ as the coboundary of
    $d$ in the space $\mathcal{A}_\infty  $, however, it is no longer a coboundary 
    in $\mathcal{A}_0$, but the cocycle
    relation still holds.
    
Notice that the subspace $\mathcal{A}_H$ of
                          $\mathcal{A}_0$ consisting of functions
                          $\varphi$ whose support is contained in $H$,
                         is isomorphic with $V$, 
                         and on this subspace $\pi_G$ and $C$ coincide with $\pi_H$ and
                          $c$.    We thus see that one can extend any
                          affine isometric action of $H$ on an $\ell^p$-space to an
                          action of  $K*H$ on an $\ell^p$-space, such
                          that the restrictions to each conjugate of
                          $H$ are all different and all isomorphic to
                          the initial action of $H$.


\subsection{Random representatives}
When $G$ is a relatively hyperbolic group, and $H$ is a peripheral
                          subgroup, we don't have in general
                          sufficiently stable coset representatives
                          for $H$ to argue like in the free product case. 
 However, as we will see,  the flow of the previous section allows
 to define, for each coset, a probability measure whose support is in
 the coset, which we think of as a random representative of the
 coset, that enjoy an $\ell^p$ version of the stability of the
 canonical normal forms of free products. We develop here the needed
 vocabulary on random representatives.

\begin{defi}\label{randomCoset}Given a discrete group $G$ and a subgroup $H<G$, a {\it random set of representatives} for $G/H$ is a section 
\begin{align*}\nu:G/H&\to\Proba(G)\\
gH&\mapsto\nu^{gH}\end{align*}
for the map $p:\Proba(G)\to{\mathcal P}(G/H)$ that assigns to a probability measure on $G$, the projection of its support in $G/H$. We will say that random set of representatives $\nu$ is {\it finite} if all the measures $\nu^{gH}$ have finite support, and {\it uniformly finite} if all those supports have cardinality uniformly bounded.\end{defi}
Notice that a set of representatives in the usual sense is a section whose image consists of Dirac masses and that for any coset $gH\in G/H$, the probability measure $\nu^{gH}$ is supported on the coset $gH\subseteq G$.
\begin{defi} For a group $G$ and a subgroup $H$, a random set of representatives $\nu$ will be {\it almost $G$-invariant} if for any $\gamma\in G$, then 
$$\nu^{gH}=\gamma\nu^{\gamma^{-1} gH}$$
except for finitely many cosets $gH\in G/H$ at most (see Figure \ref{fig;2} for an illustration).
\end{defi}
\begin{rems} \label{rem;randomreps} \begin{enumerate} \item For a group $G$ and a subgroup $H$, a random set of representatives $\nu$ will in general not be $G$-invariant. Indeed, $G$-invariance means that for any $\gamma\in G$ and any coset $gH\in G/H$, any $x\in gH$ one has
$$\nu^{gH}(x)=\nu^{\gamma^{-1} gH}(\gamma^{-1}x)$$
so in particular for $\gamma\in H$ and the coset $gH=H$, for any $x\in H$ we have $\nu^H(x)=\nu^H(\gamma^{-1}x)$, meaning that the measure $\nu^H$ is an $H$-invariant probability measure on $H$, forcing $H$ to be finite and $\nu^H$ to be the uniform measure on $H$.

\item If $G=K*H$ is a free product, then the  canonical set of
  representatives of $H$-cosets, given by the normal form (and used in
  \ref{sec;freeprod}) gives a random set of representatives
  $\nu$ that assigns to each coset, the Dirac mass of the canonical
  representative. This is, as it should be,  an almost $G$-invariant
  random set of representatives. Indeed, for any $\gamma\in G$
  and any coset $gH$, if we assume that $g$ is the canonical
  representative, then $\gamma^{-1}g$ is the canonical representative
  of $\gamma^{-1}gH$ unless there are cancellations on the normal
  forms leaving an element of $H$ in the end position. Those cosets
  $gH$ are vertices  on the geodesic in the Bass-Serre tree between
  $e$ (the base edge of the tree)  and $\gamma e$, so there are only
  finitely many of those.

\item\label{sc} If $G$ is a group obtained by a small cancellation $C'(\lambda)$
  quotient over
  a free product $K*H$ (for $\lambda <<1$),  consider a generating set
  consisting of generators of $H$ and of $K$, and the family $(R_i)$
  of relators satisfying the small cancellation condition.  For each left coset $gH$ of
  $H$ we consider a geodesic  from $1$ to   $gH$, and denote by $g_0\in
  gH$ its end point.  We distinguish
  two cases. If the geodesic has a final subsegment labelled by a word $\sigma$
  appearing in a relation $R_i$ or $R_i^{-1}$,  of lenght at least
  $2\lambda\times |R_i|$, then
  write, up to cyclic permutation $R_i=
  \sigma h R'_i$  (or $R_i^{-1} = \sigma h R'_i$) where $h\in H$ and the first letter of $R'_i$ is in
  $K$, and     we
  delare that the random representative of $gH$ consists of two dirac
  masses of weight $\frac12$ on the elements $g_0$ which is the end point
  of the geodesic,  and $g_0h$.  If there is no such segment $\sigma$
  on our chosen  geodesic  from $1$ to   $gH$, then the random
  representative of $gH$ is $g_0$ with probability $1$. One can prove,
  using standard small cancellation and hyperbolicity argument (like
  in \cite[Appendix]{RS95}), that
  in the later case, all geodesics from $1$ to $gH$  (and more
  generally all geodesics from any point to $gH$, provided they fellow
  travel $[1,gH]$ for a sufficient length) must enter $gH$ at
  $g_0$. In the former case, by similar arguments, these geodesics must enter $gH$ either on
  $g_0$ or on $g_0h$. For larger $\lambda$, up to $\lambda <1/6$, this is a consequence of \cite[Prop. 3.6, and Rem. 3.8]{GS}.\end{enumerate}
\end{rems}
\begin{defi}\label{lpalmost} For a group $G$ and a subgroup $H$, a random set of representatives $\nu$ will be {\it $\ell^p$-almost $G$-invariant} if for any $\gamma\in G$, the map
$$gH \mapsto  \|\nu^{gH}-\gamma\nu^{\gamma^{-1}gH}\|_1$$ 
belongs to $\ell_p(G/H)$.
\end{defi}
\subsection{Cocycle from random representatives} In this subsection we will see how, given a random set of representatives for a subgroup $H$ in a larger group $G$, we can construct a cocycle for the induced representation, starting from a cocycle on $H$. In case where the random set of representatives is $\ell^p$-almost $G$-invariant, we will see that the induced cocycle is actually on the $\ell^p$ induced representation.
\begin{lemma}\label{InducedCocycle}
Let $G$ be a group and let $H$ be a subgroup of $G$, with a unitary representation $(\pi_H,V)$ of $H$ and $c:H\to V$ a cocycle for $(\pi_H,V)$. Then for any finite random set of representatives $\nu$, the map
defined on $G$ by $C:\gamma\mapsto C_\gamma$ through the formula
$$C_\gamma(g)=\sum_{x\in gH}\nu^{gH}(x)c(g^{-1}x)-\sum_{y\in gH}\nu^{\gamma^{-1} gH}( \gamma^{-1}y)c(g^{-1} y)$$
is a cocyle on ${\mathcal A}_\infty$ for the induced representation.
\end{lemma}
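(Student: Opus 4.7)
The plan is to introduce an auxiliary $V$-valued function $d$ on $G$, express $C_\gamma$ as a formal coboundary of $d$, and then verify the two required properties (membership in $\mathcal{A}_\infty$ and the cocycle relation) by direct computation, mirroring the free product construction of Section \ref{sec;freeprod}.

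First I would define the map $d:G\to V$ by
\[ d(g) \;=\; \sum_{x\in gH} \nu^{gH}(x)\, c(g^{-1}x), \]
which is a well-defined finite sum because $\nu$ is a \emph{finite} random set of representatives. A change of variable $y=\gamma x$ in the second summand of the definition of $C_\gamma$ identifies
\[ C_\gamma(g) \;=\; d(g) - d(\gamma^{-1}g). \]
Writing $(\tau_\gamma f)(g)=f(\gamma^{-1}g)$ for arbitrary $V$-valued functions $f$ on $G$, this reads $C_\gamma = d - \tau_\gamma d$, where $\tau$ restricted to $\mathcal{A}_\infty$ coincides with the induced representation $\pi_G$.

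Next I would check that $C_\gamma \in \mathcal{A}_\infty$, i.e. that $C_\gamma(gh) = \pi_H(h^{-1})C_\gamma(g)$ for all $g\in G$, $h\in H$. Using $(gh)H = gH$ together with the cocycle identity for $c$, a direct computation yields
\begin{align*}
d(gh) &= \sum_{x\in gH} \nu^{gH}(x)\, c(h^{-1}g^{-1}x) \\
&= \sum_{x\in gH} \nu^{gH}(x)\bigl[c(h^{-1}) + \pi_H(h^{-1})c(g^{-1}x)\bigr] \\
&= c(h^{-1}) + \pi_H(h^{-1})\,d(g).
\end{align*}
Applying the same identity with $g$ replaced by $\gamma^{-1}g$ gives $d(\gamma^{-1}gh) = c(h^{-1}) + \pi_H(h^{-1})\,d(\gamma^{-1}g)$. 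Subtracting the two, the stray scalar term $c(h^{-1})$ cancels, so $C_\gamma(gh) = \pi_H(h^{-1})C_\gamma(g)$ as required.

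Finally, I would deduce the cocycle relation formally from the coboundary shape. Since $\tau$ is a left action ($\tau_{\gamma_1\gamma_2} = \tau_{\gamma_1}\tau_{\gamma_2}$) and agrees with $\pi_G$ on elements of $\mathcal{A}_\infty$,
\[ C_{\gamma_1\gamma_2} \;=\; d - \tau_{\gamma_1}\tau_{\gamma_2}d \;=\; (d - \tau_{\gamma_1}d) + \tau_{\gamma_1}(d - \tau_{\gamma_2}d) \;=\; C_{\gamma_1} + \pi_G(\gamma_1)\,C_{\gamma_2}, \]
the last step being legitimate precisely because the second parenthesis is already in $\mathcal{A}_\infty$ by the previous paragraph. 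There is no real obstacle in the argument; the only conceptual point to flag is that $d$ itself is \emph{not} in $\mathcal{A}_\infty$ (the extra term $c(h^{-1})$ obstructs $H$-equivariance), so $C$ is only a coboundary at the level of $V$-valued functions on $G$, not within $\mathcal{A}_\infty$, exactly as in the free product case.
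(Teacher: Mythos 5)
Your proof is correct and takes essentially the same approach as the paper: both verify $\mathcal{A}_\infty$-membership via the cocycle identity for $c$ combined with $(gh)H = gH$, and both obtain the cocycle relation by exhibiting $C_\gamma$ as the coboundary $d - \pi_G(\gamma)d$ of the auxiliary map $d(g)=\sum_{x\in gH}\nu^{gH}(x)\,c(g^{-1}x)$. You introduce $d$ at the outset and derive both properties from the single identity $d(gh)=c(h^{-1})+\pi_H(h^{-1})d(g)$, which is a slightly tidier organization of the same computation, but not a different argument.
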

\begin{figure}\label{fig;2}
\includegraphics[width=0.7\textwidth]{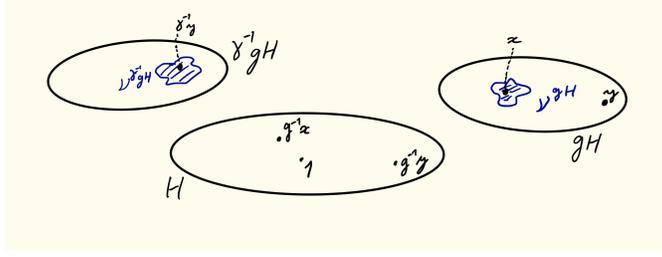}
\caption{Probability measures for coset representatives}
\end{figure}

\begin{proof}Since we assumed the random set of representatives to be finite, $C_\gamma(g)$ is well-defined by a finite sum. We first check that $C_\gamma(gh)=\pi_H(h^{-1})C_\gamma(g)$. This is a straightforward computation using first that $gH=ghH$ (hence  $\gamma^{-1} gH=\gamma^{-1} ghH$), then the cocycle condition on $c$. Indeed, one checks
\begin{eqnarray*}C_\gamma(gh)&=&\sum_{x\in gH}\nu^{gH}(x)c(h^{-1}g^{-1}x)-\sum_{y\in gH}\nu^{\gamma^{-1} gH}(\gamma^{-1}y)c(h^{-1}g^{-1}y)\\
&=&\sum_{x\in gH}\nu^{gH}(x)(c(h^{-1})+\pi_H(h^{-1})c(g^{-1}x))\\
& &- \sum_{y\in gH}\nu^{\gamma^{-1} gH}(\gamma^{-1}y)(c(h^{-1})+\pi_H(h^{-1})c(g^{-1} y))\\
&=&\sum_{x\in gH}\nu^{gH}(x)\pi_H(h^{-1})c(g^{-1}x)-\sum_{y\in gH}\nu^{\gamma^{-1} gH}(\gamma^{-1}y)\pi_H(h^{-1})c(g^{-1}y)\\
&=&\pi_H(h^{-1})C_\gamma(g)
\end{eqnarray*}
Now let us check the cocycle condition, namely that $C_{\gamma_1\gamma_2}=C_{\gamma_1}+\pi_G(\gamma_1)C_{\gamma_2}$. 
We define
$$d: G\to V, g\mapsto d(g)= \sum_{x\in gH}\nu^{gH}(x)c(g^{-1}x),$$
and we prove that
$C_{\gamma} = d - \pi_G(\gamma) d$ (which immediately implies the cocycle
relation).
Let us compute $\pi_G(\gamma)d$. By definition of $\pi_G$, one has $\pi_G(\gamma)d (g)= d(\gamma^{-1} g)$, which means  $\pi_G(\gamma)d (g)= \sum_{x\in \gamma^{-1} gH}\nu^{\gamma^{-1}gH}(x)c(g^{-1}\gamma x) $. Using the change of variable $ y=\gamma x$, one gets $\pi_G(\gamma)d (g)= \sum_{y\in  gH}\nu^{\gamma^{-1}gH}(\gamma^{-1} y)c(g^{-1}y) $. Thus,  $C_{\gamma} = d - \pi_G(\gamma) d$ and $C$ is a cocycle.

\end{proof}
\begin{rem}If the random set of representatives $\nu$ is uniformly finite and almost $G$-invariant, then the cocycle defined in Lemma \ref{InducedCocycle} is a cocycle on ${\mathcal A}_0$ and on ${\mathcal A}_p$ for any $p\geq 1$. Indeed, let us check that for any $\gamma\in G$ then $C_\gamma\in{\mathcal A}_0$. Namely, we have to check that the map $\{gH\mapsto \|C_\gamma(g)\|\}$ has finite support for any fixed $\gamma$. Except for the finitely many cosets where $\nu^{gH}\not=\gamma\nu^{\gamma^{-1} gH}$, we have that for $y\in gH$
$$\nu^{\gamma^{-1}gH}(\gamma^{-1}y)c(g^{-1} y)=\nu^{gH}(y)c(g^{-1} y)$$
so we get that $C_\gamma(g) =0$. It is in general too optimistic to
ask for almost $G$-invariance, this happens for instance in the case
where $G=H*K$ or $G$ is hyperbolic relative to $H$ and has a small
cancellation property.\end{rem}
\begin{lemma}\label{pcocycle}
Let $G$ be a group and let $H$ be a subgroup of $G$, with a unitary representation $(\pi,V)$
of $H$ and $c:H\to V$ a cocycle for $(\pi,V)$. Then for any uniformly finite $\ell^p$-almost $G$-invariant random set of representatives $\nu$, the map defined on $G$ by $C:\gamma\mapsto C_\gamma$ through the formula
$$C_\gamma(g)=\sum_{x\in gH}\nu^{gH}(x)c(g^{-1}x)-\sum_{y\in gH}\nu^{\gamma^{-1}gH}(\gamma^{-1}y)c(g^{-1}y)$$
is a cocyle on ${\mathcal A}_p$ for the induced representation.
\end{lemma}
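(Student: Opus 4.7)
The proof of Lemma \ref{InducedCocycle}, which uses no $\ell^p$ hypothesis, already establishes the right $H$-equivariance $C_\gamma(gh) = \pi_H(h^{-1})C_\gamma(g)$ and the cocycle identity $C_{\gamma_1\gamma_2} = C_{\gamma_1} + \pi_G(\gamma_1)C_{\gamma_2}$, placing each $C_\gamma$ in $\mathcal{A}_\infty$ and exhibiting $C$ as a cocycle for $\pi_G$ there. Since $\mathcal{A}_p \subset \mathcal{A}_\infty$ is $\pi_G$-invariant, the only new content to verify is the $\ell^p$-membership
$$N(C_\gamma)^p \;=\; \sum_{gH\in G/H} \|C_\gamma(g)\|_V^p \;<\; \infty \qquad\text{for each } \gamma \in G.$$

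My plan is to extract a linear domination $\|C_\gamma(g)\|_V \leq M_\gamma \cdot \|\nu^{gH} - \gamma\nu^{\gamma^{-1}gH}\|_1$ with $M_\gamma$ independent of the coset $gH$, and then to invoke $\ell^p$-almost $G$-invariance. Rewriting the defining formula as
$$C_\gamma(g) \;=\; \sum_{x\in gH}\bigl(\nu^{gH}(x) - \nu^{\gamma^{-1}gH}(\gamma^{-1}x)\bigr)\,c(g^{-1}x)$$
exhibits $C_\gamma(g)$ as a linear combination whose coefficients $\alpha_x$ are the values of the signed measure $\nu^{gH} - \gamma\nu^{\gamma^{-1}gH}$. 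Uniform finiteness of $\nu$ by a constant $K$ forces at most $2K$ nonzero such $\alpha_x$, and the fact that both measures are probability measures produces the crucial cancellation $\sum_x \alpha_x = 0$. Exploiting it, pick a reference point $x_0$ in the joint support and subtract $c(g^{-1}x_0)$ from each summand; the $H$-cocycle identity
$$c(g^{-1}x) - c(g^{-1}x_0) \;=\; \pi_H(g^{-1}x_0)\,c(x_0^{-1}x),$$
combined with the isometry of $\pi_H$, gives the pointwise estimate
$$\|C_\gamma(g)\|_V \;\leq\; M_\gamma \cdot \|\nu^{gH} - \gamma\nu^{\gamma^{-1}gH}\|_1,$$
with $M_\gamma$ a uniform upper bound (over cosets $gH$) on $\|c(x_0^{-1}x)\|_V$ as $x_0, x$ range over the at-most-$2K$ support points. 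Raising to the $p$-th power and summing over cosets,
$$N(C_\gamma)^p \;\leq\; M_\gamma^p \sum_{gH\in G/H} \|\nu^{gH} - \gamma\nu^{\gamma^{-1}gH}\|_1^p,$$
which is finite precisely by the $\ell^p$-almost $G$-invariance of $\nu$ (Definition \ref{lpalmost}).

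The main obstacle in this plan is justifying the finiteness of $M_\gamma$: bare uniform finiteness of $\nu$ controls the \emph{cardinality} of the supports but not their \emph{diameter} inside the cosets, and it is the latter (equivalently, the $H$-length of the elements $x_0^{-1}x$) that dominates $\|c(x_0^{-1}x)\|_V$. In the applications of this lemma later in the paper, this diameter bound is supplied by the concrete construction of the random representatives through the flow of Section \ref{sec;def_flowstep}, whose supports sit inside cones of bounded parameter around canonical peripheral vertices (Proposition \ref{prop;theta0}); this geometric property, implicit in the uses but not spelled out in the bare hypothesis of the lemma, is what ultimately makes $M_\gamma$ finite and closes the argument.
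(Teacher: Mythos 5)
Your proposal is essentially the paper's own argument, with one minor technical variation. Both proofs delegate the cocycle relation and $H$-equivariance to Lemma~\ref{InducedCocycle}, reduce to showing $gH\mapsto\|C_\gamma(g)\|_V$ is $p$-summable, normalize by choosing $g$ in the intersection of the two supports, obtain a bound of the form $\|C_\gamma(g)\|_V\lesssim\|\nu^{gH}-\gamma\nu^{\gamma^{-1}gH}\|_1$, and then invoke $\ell^p$-almost $G$-invariance. The one place you diverge is in how you manufacture that bound: the paper writes $\nu^{gH}=\eta+\nu_1^0$ and $\gamma\nu^{\gamma^{-1}gH}=\eta+\nu_\gamma^0$ with $\eta=\mathscr{M}(\nu^{gH},\gamma\nu^{\gamma^{-1}gH})$, notes the $\eta$ parts cancel in $C_\gamma(g)$, and bounds the two residual pieces directly; you instead pass the common factor $c(g^{-1}x_0)$ through the zero sum $\sum_x\bigl(\nu^{gH}(x)-\nu^{\gamma^{-1}gH}(\gamma^{-1}x)\bigr)=0$ and then apply the $H$-cocycle identity. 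The two tricks are interchangeable and yield the same estimate (your constant is in fact slightly sharper, $M$ rather than the paper's $2M$). Your closing observation is also accurate and worth keeping: the hypothesis ``uniformly finite'' as literally worded in Definition~\ref{randomCoset} only bounds the \emph{cardinality} of the supports, whereas the estimate---in your proof and in the paper's---needs a uniform bound on the \emph{diameter} of the supports in the word metric of $H$, since what must be controlled is $\|c(x_0^{-1}x)\|_V$ for $x_0,x$ in a common support. The paper's proof in fact silently uses exactly that stronger hypothesis (``$z\in B(e,2R)$ ... where $R$ is the uniform bound on the supports''), and Proposition~\ref{prop;induced_affine_rep} states it correctly as ``supports uniformly bounded''; in the intended application (Proposition~\ref{cor;periph_i}) the bound comes from Proposition~\ref{prop;theta0}, which places each mask in a cone of bounded parameter. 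So you have not found a flaw in your own argument but rather flagged a small mismatch between the definition and what the proof actually needs, which is indeed present in the source.
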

\begin{proof} We already know from Lemma \ref{InducedCocycle} that it is a cocycle on ${\mathcal A}_\infty$, so it remains only to check that for any $\gamma\in G$ then $C_\gamma\in{\mathcal A}_p$. Namely, we have to check that the map $\{gH\mapsto \|C_\gamma(g)\|\}$ is $p$-summable for any fixed $\gamma$. Because $\nu$ is $\ell^p$-almost $G$-invariant, there are only finitely many cosets where $\hbox{supp}(\nu^{gH})\cap\hbox{supp}(\gamma\nu^{\gamma^{-1}gH})=\emptyset$. For the other cosets, we decompose $\nu^{gH}=\eta+\nu_1^0$ and $\nu^{\gamma^{-1}gH}=\eta+\nu_\gamma^0$ for $\eta={\mathcal M}\{\nu^{gH},\gamma\nu^{\gamma^{-1}gH}\}$, so that 
\begin{eqnarray*}C_\gamma(g)&=&\sum_{x\in gH}(\eta(x)+\nu_1^0(x))c(g^{-1}x)-\sum_{y\in gH}(\eta(y)+\nu_\gamma^0(y))c(g^{-1}y)\\
&=&\sum_{x\in gH}\nu_1^0(x)c(g^{-1}x)-\sum_{y\in gH}\nu_\gamma^0(y)c(g^{-1}y)\end{eqnarray*}
Since $C_\gamma\in{\mathcal A}_\infty$, without loss of generality we can assume that $g\in\hbox{supp}(\nu^{gH})\cap\hbox{supp}(\gamma\nu^{\gamma^{-1}gH})\not=\emptyset$, so that 
$$\max\{\|c(g^{-1}x)\|\,|\,x\in\hbox{supp}(\nu^{gH}\cup\gamma\nu^{\gamma^{-1}gH})\}\leq\max\{\|c(z)\|\,|\,z\in B(e,2R)\subseteq H\}= M,$$ 
where $R$ is the uniform bound on the supports of $\nu$ and hence $M$ does not depend on $g$ or $\gamma$. Since both $\nu_1^0$ and $\nu_\gamma^0$ are bounded by $\|\nu^{gH}- \gamma\nu^{\gamma^{-1}gH}\|_1$, we deduce that $\|C_\gamma(g)\|\leq 2M\|\nu^{gH}- \gamma\nu^{\gamma^{-1}gH}\|_1$, which allows us to conclude.
\end{proof}

\subsubsection{Contribution and $H$-properness} \label{sec;contrib}

We define now the contribution of a coset of $H$ in the value at $\gamma$ of the cocycle $C$.  Given two coset representatives,  $x$  for $gH$ and $x'$  for $\gamma^{-1} gH$, the contribution (for these coset representatives) of $gH$ in $C_\gamma$ will be the word distance in $H$ between  $g^{-1}x$ and  
    $g^{-1}\gamma x'$. It may help to think of these $x$ and $\gamma x'$ as projections on $gH$ of $1$ and of $\gamma$. In the context of radom coset representatives, we will sum over the probability measures.

\begin{defi}Let $G$ be a group, $H$ be a subgroup of $G$ and let $\nu$ be a random set of representatives for $H$ in $G$. Consider a word metric $d_H$ on $H$.
For each $\gamma\in G$, let us define the {\it $(d_H)$-contribution for $\gamma$} with
respect to the random set of representatives $\nu$,  to be 
$$ \max_{gH \in G/H}   \,  \left( \sum_{(x,y)\in (gH)^2}  \left(\nu^{gH} (x)
  \nu^{\gamma^{-1} gH} (\gamma^{-1}y)\right)    d_H( g^{-1}x, 
    g^{-1} y)  \right) \,  \qquad {\rm (Contrib)}$$ 
Observe that in each term of the maximum, the only $x$ and $y$ that can be of
interest, are those respectively in the support of $\nu^{gH}$ and in
the $\gamma$-translate of the support of  $\nu^{\gamma^{-1}gH}$. 
We say that a coset $gH$ is {\it involved in the contribution} if it realises the maximum in the formula.
\end{defi}
\begin{rem} Since we will be interested in whether or not the contribution tends to infinity (for a sequence of elements), the choice of $d_H$ among possible word metrics in $H$ is often irrelevant, and we simply talk about the contribution without precising $d_H$.
\end{rem}

\begin{rem}If our random set of representatives consists of
Dirac masses at $\tilde{g}$ (for the coset $gH$), as it is the case
for the free products, then the $(d_H)$-contribution is
$$ \max_{G/H}  d_H( \tilde{g}, \gamma (\widetilde{\gamma^{-1} g})).$$ 
In the case of a free product, this is the maximum of the word lengths of the
elements $h_i$ of the normal form of $\gamma$. \end{rem}

\begin{defi}We say that the cocycle $C$ is {\it $H$-proper} if   
  $\|C_{\gamma}\|$ goes to infinity with the $(d_H)$-contribution  of $\gamma$.
\end{defi}

Using the geometric idea of projection, it means that if there is a coset of $H$ on which the projection of $\gamma$ is far from the projection of $1$, then $C_\gamma$ is a large vector.
We can now conclude with the main result for this section
\begin{prop} \label{prop;induced_affine_rep} Let $G$ be a group and
  let $H$ be a subgroup of $G$, and a unitary representation $(\pi,V)$
  of $H$ and $c:H\to V$ a proper cocycle for $(\pi,V)$. Then for any
  finite $\ell^p$-almost $G$-invariant random set of representatives $\nu$ whose supports are
  uniformly bounded, the ${\mathcal A}_p$ cocycle for the induced representation defined in Lemma \ref{pcocycle} is $H$-proper.
\end{prop}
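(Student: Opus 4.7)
The plan is to bound $\|C_\gamma\|_{\mathcal{A}_p}$ from below by $\|C_\gamma(\tilde g)\|$ for a single well-chosen representative $\tilde g$ of a coset involved in the contribution, and then to show that this single value must be large whenever the contribution is large. The only hypotheses used will be the uniform bound on the diameters of the supports of $\nu$ and the properness of $c$ on $H$; the $\ell^p$-almost $G$-invariance only served in Lemma \ref{pcocycle} to ensure that $C_\gamma$ lies in $\mathcal{A}_p$.

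Fix $\gamma \in G$ and let $gH$ be a coset realising the $(d_H)$-contribution $M$ of $\gamma$. Let $K$ be a uniform bound on the diameters of the supports of $\nu$, and set $L_K = \sup\{\|c(h)\| : h \in B_H(e,K)\} < \infty$. Pick $\tilde g \in \operatorname{supp}(\nu^{gH}) \subseteq gH$ and $\tilde g' \in \gamma\operatorname{supp}(\nu^{\gamma^{-1}gH}) \subseteq gH$. For every pair $(x,y)$ contributing to the sum defining the contribution, the triangle inequality for $d_H$ combined with $d_H(\tilde g, x), d_H(\tilde g', y) \leq K$ gives
$$d_H(g^{-1}x, g^{-1}y) \leq d_H(\tilde g, \tilde g') + 2K.$$
Averaging with the probability weights $\nu^{gH}(x)\,\nu^{\gamma^{-1}gH}(\gamma^{-1}y)$ yields $M \leq d_H(\tilde g, \tilde g') + 2K$, hence $d_H(\tilde g, \tilde g') \geq M - 2K$.

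Now evaluate $C_\gamma$ at $\tilde g$. Using the cocycle relation $c(ab) = c(a) + \pi(a)c(b)$, for $x \in \operatorname{supp}(\nu^{gH})$ we have $\tilde g^{-1}x \in B_H(e, K)$, so $\|c(\tilde g^{-1}x)\| \leq L_K$; for $y \in \gamma\operatorname{supp}(\nu^{\gamma^{-1}gH})$ write $\tilde g^{-1}y = (\tilde g^{-1}\tilde g')\,((\tilde g')^{-1}y)$ with $(\tilde g')^{-1}y \in B_H(e, K)$, so that
$$\|c(\tilde g^{-1}y) - c(\tilde g^{-1}\tilde g')\| = \|c((\tilde g')^{-1}y)\| \leq L_K,$$
using that $\pi$ is isometric. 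Integrating against $\nu^{gH}$ and $\nu^{\gamma^{-1}gH}$ gives two vectors $A_1, A_2 \in V$ with $\|A_1\| \leq L_K$ and $\|A_2 - c(\tilde g^{-1}\tilde g')\| \leq L_K$, so by the definition of $C$,
$$\|C_\gamma(\tilde g)\| = \|A_1 - A_2\| \geq \|c(\tilde g^{-1}\tilde g')\| - 2L_K.$$

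Since $d_H(e, \tilde g^{-1}\tilde g') = d_H(\tilde g, \tilde g') \geq M - 2K$, the properness of $c$ on $H$ forces $\|c(\tilde g^{-1}\tilde g')\| \to \infty$ as $M \to \infty$. Combined with $\|C_\gamma\|_{\mathcal{A}_p} \geq \|C_\gamma(\tilde g)\|$, this gives $\|C_\gamma\|_{\mathcal{A}_p} \to \infty$ with the contribution, which is the $H$-properness of $C$. The only delicate point is the triangle-inequality reduction $M \leq d_H(\tilde g, \tilde g') + 2K$, where the uniform bound on the supports of $\nu$ is essential; the rest is a direct unwinding of the cocycle identity together with the properness hypothesis on $c$.
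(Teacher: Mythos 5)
Your proof is correct and follows essentially the same strategy as the paper's: isolate a coset $gH$ realising the contribution, expand $C_\gamma$ there via the cocycle relation, and exhibit a dominant term $c(\cdot)$ whose norm blows up by properness of $c$ while the remaining terms are bounded by a constant depending only on the uniform diameter bound of the supports. Your version is slightly streamlined -- by evaluating $C_\gamma$ at a point $\tilde g$ inside $\operatorname{supp}(\nu^{gH})$ the first sum is trivially small, and picking arbitrary (rather than extremal) representatives $\tilde g,\tilde g'$ avoids the paper's explicit threshold $R$ and its attendant case distinction, at the harmless cost of a possibly vacuous lower bound when $M$ is small.
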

\begin{proof} Let $D$ be a bound on the diameter of the supports of $\nu^{gH}$. Let
 $D_c\geq 1$ be a bound on $\|c(h)\|$ for $h\in H$ such that $d_H(1,h) \leq
 2D$. 

By properness of $c$, we can choose $R>2D$ such that $\|c(h)\| >5D_c$ for each $h$ outside the ball of
radius $R$ in $(H, d_H)$.  

Because $\nu^{gH}$ is a probability, we can rewrite the sum defining $C_\gamma
(g)$ (in Lemma \ref{pcocycle}) as 
$$C_\gamma(g)=\sum_{(x,y) \in (gH)^2} \left(  \nu^{gH}(x)
  \nu^{\gamma^{-1} gH} (\gamma^{-1}y ) \right) \left( c(g^{-1}x) -
  c(g^{-1}y) \right)$$
which, by cocycle identity for $c$ gives
 $$C_\gamma(g) = \sum_{(x,y) \in (gH)^2} \left(  \nu^{gH}(x)
  \nu^{\gamma^{-1} gH} (\gamma^{-1}y ) \right) \left(   \pi(g^{-1} y)  c(y^{-1}x) \right).    $$
If $(x,y)$ and $(x',y')$ are  in $(gH)^2$, then by the cocycle relation we have
$$ \pi(g^{-1} y)   c({y}^{-1}x)  =\pi(g^{-1} y) c({y}^{-1}y') + \pi({g}^{-1}y') c({y'}^{-1} x') +
\pi( {g}^{-1}x'  ) c({x'}^{-1}x).$$
Let $g$ such that $gH$ is involved in the contribution for $\gamma$. We want to bound from below the norm of $C_\gamma$ in terms of the contribution for $\gamma$. Thus, 
we may assume that the diameter of the union of the supports of $\nu^{gH}$
and $\gamma \nu^{\gamma^{-1}gH}$ is
larger than $R$ (otherwise this contribution is less than $R$).  Take $x_0, y_0$ in those supports at maximal
distance from each other (hence larger
than $R$). The contribution of $gH$ for $\gamma$ is at least  $d_H( g^{-1}x_0, g^{-1}y_0) -2D  = d_H( 1, y_0^{-1} x_0)-2D$, and at most $d_H( 1, y_0^{-1} x_0)+2D$. Computing  $C_\gamma(g)$   we get
  \begin{align*}  C_\gamma(g) = \sum_{(x,y) \in (gH)^2} &\left(  \nu^{gH}(x)
  \nu^{\gamma^{-1} gH} (\gamma^{-1}y ) \right) \times \\
&\left(     \pi(g^{-1} y) c({y}^{-1}y_0) +\pi({g}^{-1}y_0) c(y_0^{-1} x_0) +
\pi( {g}^{-1}x_0  ) c(x_0^{-1}x)  
 \right). \end{align*} \\
Hence we can rewrite
 \begin{align*}C_\gamma(g) =      \pi({g}^{-1}y_0)
                    c(y_0^{-1} x_0)  & + \sum_{(x,y) \in (gH)^2}   \left(  \nu^{gH}(x)
  \nu^{\gamma^{-1} gH} (\gamma^{-1}y ) \right)  \left(  \pi(g^{-1} y)
                                              c({y}^{-1}y_0) \right)
  \\
    & +   \sum_{(x,y) \in (gH)^2}   \left(  \nu^{gH}(x)
  \nu^{\gamma^{-1} gH} (\gamma^{-1}y ) \right)  \left(   \pi( {g}^{-1}x_0  ) c(x_0^{-1}x)  
 \right). 
\end{align*}
 The first term $\pi({g}^{-1}y_0)
                    c(y_0^{-1} x_0)  $ has norm at least $5D_c$ by
                    assumption on $R$, and the result
of each sum has norm at most $D_c$ by triangular inequality (recall that the sum of the
coefficients is $1$) and assumption on $D_c$.  
  Therefore, in this case,  $\|C_\gamma(g) \|
\geq  \|c(y_0^{-1} x_0) \|   -2D_c \geq 3D_c$.  

Recall that the contribution  for $\gamma$ is between  $d_H( 1, y_0^{-1} x_0)-2D$ and $d_H( 1, y_0^{-1} x_0)+2D$.  By properness of $c$ the quantity  $ \|c(y_0^{-1} x_0) \|$ goes to infinity with the contribution for $\gamma$, and therefore so does  $\|C_\gamma(g) \|$.



\end{proof}

\subsection{Peripherally
  proper actions}

The flow of the previous section provides, in relatively
hyperbolic groups, random coset representatives for the
peripheral subgroups.
\begin{prop} \label{cor;periph_i} Let $G$ be a relatively hyperbolic group and let $H_1,\dots,H_k$ be the peripheral subgroups, and let $X$ be the coned-off Cayley graph of $G$ with respect to $H_1,\dots,H_k$. Then, for each $i=1,\dots,n$ the map
$$\nu:  \left\{ \begin{array}{ccc}  G/{H_i}& \to& \Proba(G) \\  gH_i&
                                                                      \mapsto&
                                                                               \mu_1(\widehat{gH_i}) \end{array}\right.$$
is an $\ell^p$-almost $G$-invariant random set of representatives for $H_i$, where $\mu_1(\widehat{gH_i})$ is the mask of the infinite valence vertex $\widehat{gH_i}$ from the identity $1$, as in Definition~\ref{flot}.

If moreover $H_i$ acts properly
  by affine isometries 
  on an $\ell^p$-space, then $G$ acts by affine isometries on an
  $\ell^p$-space, and this action is $H_i$-proper.
\end{prop}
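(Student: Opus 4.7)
The plan is to verify the two claims in turn, relying mainly on the mask equivariance (Proposition~\ref{prop;mask_equivariance}), the convergence estimate of Corollary~\ref{cor;ALconique}, and Proposition~\ref{prop;induced_affine_rep}.

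First I would check that $\nu^{gH_i}:=\mu_1(\widehat{gH_i})$ actually defines a random set of representatives in the sense of Definition~\ref{randomCoset}, i.e., that this probability measure is supported on a finite subset of $gH_i$. Since $\widehat{gH_i}$ has infinite valence, Proposition~\ref{prop;theta0}(1) tells us that the support of $\mu_1(\widehat{gH_i})$ lies in the $1$-neighborhood of $\widehat{gH_i}$; moreover, unpacking Definition~\ref{defi:flowstep}, the ending step for an infinite valence vertex places the final mass on the slice at distance exactly $1$ from $\widehat{gH_i}$, and by construction of the coned-off Cayley graph those vertices are precisely the elements of $gH_i$. Uniform finiteness of the supports follows from the containment in $\Co_{160\delta}(e)$ given by Proposition~\ref{prop;theta0}(1), together with finiteness of cones in fine graphs (Proposition~\ref{prop;cones_are_nicer}) and the cofiniteness of the $G$-action, which bounds cone cardinalities uniformly.

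Next I would establish $\ell^p$-almost $G$-invariance. By Proposition~\ref{prop;mask_equivariance}, for every $\gamma\in G$,
\[ \gamma\cdot\mu_1(\widehat{\gamma^{-1}gH_i})=\mu_\gamma(\widehat{gH_i}), \]
so that
\[ \bigl\|\nu^{gH_i}-\gamma\cdot\nu^{\gamma^{-1}gH_i}\bigr\|_1 = \bigl\|\mu_1(\widehat{gH_i})\,\Delta\,\mu_\gamma(\widehat{gH_i})\bigr\|_1. \]
Summing the $p$-th powers over $gH_i\in G/H_i$ corresponds to summing over the infinite valence vertices $\widehat{gH_i}$ of $X$ (one per coset), and is therefore dominated by $\sum_{a\in X}\|\mu_1(a)\,\Delta\,\mu_\gamma(a)\|^p$, which is finite for $p$ large enough by Corollary~\ref{cor;ALconique} applied with $x=\gamma$. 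This proves the $\ell^p$-almost $G$-invariance.

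For the second assertion, given a proper cocycle $c_{H_i}:H_i\to V$ for a unitary representation on an $\ell^p$-space $V$, I would invoke Proposition~\ref{prop;induced_affine_rep} applied to the uniformly finite, $\ell^p$-almost $G$-invariant random set of representatives $\nu$ built in the first part. This directly produces an affine isometric action of $G$ on the $\ell^p$-space $\mathcal{A}_p\cong\ell^p(G/H_i,V)$ via the induced representation, whose defining cocycle is $H_i$-proper in the sense of Section~\ref{sec;contrib}. The main obstacle is the first part: confirming that the masks at infinite valence vertices genuinely land in the corresponding coset and that cocompactness provides the uniform bound on support sizes required to feed into Proposition~\ref{prop;induced_affine_rep}; once these bookkeeping points are settled, the second part is immediate from the machinery already developed.
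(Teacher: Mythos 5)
Your proof is correct and takes essentially the same route as the paper: identify the support of the mask with the coset via Proposition~\ref{prop;theta0}(1), use the equivariance of masks (Proposition~\ref{prop;mask_equivariance}) to rewrite $\|\nu^{gH_i}-\gamma\nu^{\gamma^{-1}gH_i}\|_1$ as $\|\mu_1(\widehat{gH_i})\,\Delta\,\mu_\gamma(\widehat{gH_i})\|_1$, invoke Corollary~\ref{cor;ALconique} for $\ell^p$-summability, and then feed the resulting uniformly finite $\ell^p$-almost $G$-invariant random representatives into Proposition~\ref{prop;induced_affine_rep}. You are in fact a bit more careful than the paper in flagging that the uniform bound on supports (needed as the constant $D$ in the proof of Proposition~\ref{prop;induced_affine_rep}) comes from cone-finiteness and cofiniteness of the action, a point the paper's proof leaves implicit.
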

\begin{proof} That $\nu^{gH_i}=\mu_1(\widehat{gH_i})$ is a probability measure
  supported on the coset $gH_i$ is the first assertion of Proposition
  \ref{prop;theta0}, hence $\nu$ is a random set of
  representatives for $H_i$. That this set is $\ell^p$-almost
  $G$-invariant is the content of Corollary
  \ref{cor;ALconique}. Indeed,
  $\mu_x(\widehat{gH_i})=  x\mu_1(\widehat{x^{-1} g H_i}) =  x\nu^{x^{-1}gH_i}$ by Proposition
  \ref{prop;mask_equivariance} (since the flow step is equivariant),
    hence $\| \mu_1(\widehat{gH_i}) \Delta
  \mu_{x}(\widehat{gH_i}) \|= \|\nu^{gH}-x\nu^{x^{-1}gH}\|_1$.   By Corollary
  \ref{cor;ALconique}, this is $\ell^p$-summable over the set 
  of left cosets of $H$.

Now, we can apply
Proposition \ref{prop;induced_affine_rep}, and obtain an action of $G$
by affine isometries on an  $\ell^p$-space that is $H_i$-proper.
Since angles at a vertex  $\widehat{gH_i}$ are bounded above by the word-length in $H_i$, we obtain the stated condition from the
definition of $H_i$-properness.\end{proof}
\section{Proper actions of relatively hyperbolic groups}
To finish the proof the our main result, Theorem \ref{main}, we start by a well-known remark.
\begin{rem}\label{BLM}If a group acts properly by affine isometries on an $\ell^p$-space, say $\ell^p(X)$ for $X$ a discrete set, then for all $q\geq p$, it also acts properly on $\ell^{q}(X)$. Indeed, Banach-Lamperti's theorem says that for $p\not=2$ the linear part comes from an action of $G$ on $X$, so yields a linear representation on $\ell^q(X)$ for any $q\geq 1$. Then we have that $\ell^p(X)\subseteq\ell^q(X)$ and hence the cocycle on $\ell^p$ can be used as is on $\ell^q$. Indeed, for any finite sequence $(a_i)$ then
$$(\sum_ia_i^q)^{\frac{1}{q}}=\left(\left(\sum_i(a_i^p)^{\frac{q}{p}}\right)^{\frac{p}{q}}\right)^{\frac{1}{p}}\leq (\sum_ia_i^p)^{\frac{1}{p}}$$
(the inequality being Minkowski inequality for $s=q/p\geq 1$) and hence on a discrete set $X$ we have that $\|\ \|_q\leq\|\ \|_p$, so that $\ell^p(X)\subseteq\ell^q(X)$.\end{rem}
\begin{proof}[Proof of Theorem \ref{main}] The first part of the theorem is the content of Theorem \ref{thm;actionAL}. Let $H_1,\dots,H_k$ be the peripheral subgroups of $G$ and assume that there are uniformly convex Banach spaces $B_i$ on which $H_i$ acts properly by affine isometries. We use, for each $i=1,\dots,n$, the action obtained in Proposition
\ref{cor;periph_i} and denote by $\mathscr{V}_i$ the uniformly convex Banach space obtained. We also
use Theorem \ref{thm;actionAL}, and denote by $\mathscr{V}_0$
the space obtained.  
Let $\mathscr{W} = \bigoplus_{i=0}^k \mathscr{V}_i$ be a uniformly convex product of the uniformly convex Banach spaces $\mathscr{V}_i$, for $i=0,\dots,k$. The action of $G$, coordinates by coordinates, is
by affine isometries and it remains to check that it is proper. This
amounts to checking that the action is proper in one of the
coordinates at least. Let $g_n$ be a sequence in $G$ going to
infinity. According to Proposition \ref{prop;pseudo_word_metric}, then
either $g_n$ goes to infinity in the coned-off graph as well, or
it remains bounded, and $[1,g_n]$ gets arbitrary large angles at vertices of infinite valence.    In
the first case $g_n(\vec{0})$ goes to infinity in the $\mathscr{V}_0$
coordinate. In the second case, $[1,g_n]$ has an arbitrarily large
angle   at some  vertex fixed by conjugates of $H_{i(n)}$. Extracting
so that $i(n)$ is constant (which can be done in a way that
partitions the sequence) 
by Proposition \ref{cor;periph_i}    $g_n(\vec{0})$ goes to infinity
in the corresponding coordinate  $\mathscr{V}_i$. In all cases,
$g_n(\vec{0})$ goes to infinity in $\mathscr{W} $.

If the peripheral subgroups act properly by affine isometries on some $\ell^p$, taking the maximum over all $p$'s on which the peripheral subgroups act, and the $p$ obtained in Theorem \ref{thm;actionAL}, we obtain $p$ large enough and (according to Remark \ref{BLM}) a proper action on the $\ell^p$-combination of those $\ell^p$-spaces.
\end{proof}
The following statement has an unfortunate technical assumption of compatibility between the CAT(0) cube complex and the system of random coset representatives, through the notion of contribution (see Section \ref{sec;contrib}). The proof is completely similar and in some sense a particular case of the proof of Theorem \ref{main}.
\begin{prop}\label{Haagerup}Let $G$ be a group acting on a CAT(0) cubical complex $X$, with finitely many orbits, and finite edge stabilizers. Assume that the stabilizer of any point has the Haagerup property (respectively, acts properly on and $\ell^p$-space) and admits a random system of coset representatives that is $\ell^2$-almost $G$-invariant. Assume also that for any sequence $(g_n)$ of elements of $G$ going to infinity in $G$, either for $x\in X$, $g_nx$, or the contribution for $g_n$ of cosets of stabilizers of vertices of $X$ goes to infinity. Then $G$ has the Haagerup property as well (respectively, $G$ acts properly on an $\ell^p$-space).\end{prop}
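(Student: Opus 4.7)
The plan is to follow closely the pattern of the proof of Theorem \ref{main}, replacing the cocycle built from the coned-off Cayley graph by a cocycle built from the cubical structure of $X$, and replacing the peripheral subgroups by the vertex stabilizers. Concretely, I would build two affine isometric actions of $G$ on uniformly convex Banach spaces (respectively on $\ell^p$-spaces) — one that detects displacement in $X$, one that detects motion inside the vertex stabilizers — and combine them through a uniformly convex product (using Remark \ref{BLM} to align exponents in the $\ell^p$ case).

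For the first, I would use the standard wall/hyperplane construction on the CAT(0) cube complex: let $\mathcal{H}$ denote the set of oriented half-spaces of $X$, on which $G$ acts by permutation, thereby inducing an isometric linear action on $\ell^2(\mathcal{H})$ (resp.\ $\ell^p(\mathcal{H})$). Fixing a base vertex $x_0\in X$, the signed indicator of the finite set of half-spaces separating $x_0$ from $gx_0$ defines a finitely supported cocycle $c_0(g)$ whose $\ell^p$-norm is comparable to $d_X(x_0,gx_0)^{1/p}$, hence blows up as soon as $gx_0$ escapes every ball of $X$. Denote the resulting uniformly convex space by $\mathscr{V}_0$; note that the hypothesis that edge stabilizers are finite is only needed to guarantee that this action is genuinely orbit-finite on oriented edges, which is automatic here.

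For the second, let $V_1,\dots,V_k$ be orbit representatives of the vertices of $X$, with stabilizers $H_1,\dots,H_k$, and let $B_i$ denote the uniformly convex Banach space on which $H_i$ acts properly. The assumed $\ell^2$-almost $G$-invariant random coset representatives for each $H_i$, combined with Proposition \ref{prop;induced_affine_rep}, produce an affine isometric action of $G$ on an induced uniformly convex space $\mathscr{V}_i$ that is $H_i$-proper in the sense of Section \ref{sec;contrib}: the norm $\|C_\gamma\|_{\mathscr{V}_i}$ grows with the contribution for $\gamma$ of cosets of $H_i$.

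Finally, I form $\mathscr{W}=\mathscr{V}_0\oplus\mathscr{V}_1\oplus\cdots\oplus\mathscr{V}_k$ as a uniformly convex product, on which $G$ acts diagonally by affine isometries. For properness, take $g_n\to\infty$ in $G$: by the hypothesis, either some $g_n x$ escapes every ball of $X$, so that $\|c_0(g_n)\|_{\mathscr{V}_0}\to\infty$; or the contribution for $g_n$ of cosets of vertex stabilizers tends to infinity, in which case one extracts a subsequence on which the index $i(n)=i$ is constant, and $H_i$-properness forces $\|C_{g_n}\|_{\mathscr{V}_i}\to\infty$. Either way $\|g_n(\vec 0)\|_{\mathscr{W}}\to\infty$, proving properness. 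The main delicacy is the bookkeeping of exponents and uniform convexity for the product (handled by the Clarkson/Day facts recalled around Definition \ref{uc}, and by Remark \ref{BLM} to upgrade any surplus $\ell^p$ ingredient); the rest is essentially tautological once one notices that the hypothesis on contributions was designed exactly to feed Proposition \ref{prop;induced_affine_rep}.
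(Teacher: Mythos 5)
Your proof follows essentially the same route as the paper's: one cocycle from the wall-space (Niblo--Reeves) structure on $X$ to detect unbounded displacement, one $H_i$-proper cocycle for each vertex stabilizer from the $\ell^2$-almost $G$-invariant random coset representatives via Proposition \ref{prop;induced_affine_rep}, a uniformly convex (or $\ell^p$) product, and then the dichotomy hypothesis to get properness. The only murky remark is your parenthetical claim that the finite-edge-stabilizer hypothesis is ``only needed to guarantee that this action is genuinely orbit-finite on oriented edges, which is automatic here'' --- the phrase ``orbit-finite'' doesn't quite say anything, and in fact that hypothesis is not invoked in the argument once the dichotomy is assumed (it is there to make the dichotomy plausible, e.g.\ so hyperplane stabilizers cannot give bounded orbits without contribution); but this aside does not affect the correctness of your proof.
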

\begin{proof}
If we assume the existence of $\ell^2$-almost $G$-invariant random coset representatives we obtain, according to Proposition \ref{cor;periph_i} for each $i=1,\dots,k$, an action on a Hilbert space (respectively, an $\ell^p$-space) denoted $\mathscr{V}_i$. We also use Niblo-Reeves' construction \cite{NiRe} to obtain a Hilbert space, denoted by $\mathscr{V}_0$, from the action on a CAT(0) cube complex (and hence also on an $\ell^p$-space according to Remark \ref{BLM}). Let $\mathscr{W} = \bigoplus_{i=0}^k \mathscr{V}_i$ be the $\ell^2$ sum of the Hilbert spaces $\mathscr{V}_i$, for $i=0,\dots,k$, this is again a Hilbert space (respectively, the $\ell^p$-sum of the $\ell^p$-spaces $\mathscr{V}_i$, which is again an $\ell^p$-space). The action of $G$, coordinates by coordinates, is by affine isometries and it remains to check that it is proper. This amounts to checking that the action is proper in one of the coordinates at least. Let $g_n$ be a sequence in $G$ going to infinity. Assume first that there is a point $x$ in the CAT(0) cube complex (denoted by $X$)  so that $d(x,g_n(x))$ goes to infinity. Then $g_n(\vec{0})$ goes to infinity in the $\mathscr{V}_0$ coordinate. Assume now the other case, that 
$d(x,g_n(x))$ remains bounded for all points $x$.  Then by assumption, the contribution of some coset of vertex stabilizer goes to infinity. 
\end{proof}
\begin{proof}[Proof of Corollary \ref{scH}]
According to Remarks \ref{rem;randomreps} part (\ref{sc}), in finitely presented
small cancellation groups over a free product, the images of the factor
groups admit an almost $G$-invariant random system of coset representatives. Recall also
(see \cite{MS})  that any small cancellation
group over a free product acts on a cubical CAT(0) complex $X$ with
finitely many orbit, each of which has trivial stabilizer, and whose stabilizer of vertices are images of the free factors (and their conjugates).   It is
obtained, as in \cite{Wise}, by means of 
a wallspace structure  (or hypergraph system) of an associated
2-dimensional polygonal complex.  
 $G$ is hyperbolic relative to these stabilizers of vertices (see for instance \cite{Pankratev99}), and by \cite[Prop 4.13]{BowRH}, the $1$-squeleton of $X$ is equivariantly quasi-isometric to a coned-off Cayley graph for $G$ over these subgroups.
 If, for a sequence $g_n$, and a point $x\in X$, the sequence $g_nx$ is bounded, then the sequence $g_n$ is bounded in the coned-off Cayley graph. If it goes to infinity in the group, then the contribution for it of cosets of stabilizers of vertices goes to infinity.  
 Proposition \ref{Haagerup} hence applies.
\end{proof}

\smallskip

\noindent {\sc Indira Chatterji,  Laboratoire de Math\'ematiques J.A. Dieudonn\'e
UMR no. 7351 CNRS UNS Universit\'e de Nice-Sophia Antipolis 06108 Nice Cedex 02, France}

\noindent {\sc Fran\c{c}ois Dahmani, Institut Fourier, Univ. Grenoble
  Alpes, F-38000 Grenoble, France } 
\end{document}